\title{On the extremal values of the cyclic continuants
of Motzkin and Straus }
\author{Alessandro De Luca\textsuperscript{1}, 
Luca Q. Zamboni\textsuperscript{2}}
\date{}
\newcommand\rev[1]{% reversal:
  \IfSingleItemTF{#1}%
    {#1^*}% for a single variable
    {\left(#1\right)^*}{}} % multiple vars
\newcommand*{\dt}[1]{%
  \accentset{\mbox{\large\bfseries .}}{#1}}
\newcommand{\nats}{\mathbb N}
\newcommand{\reals}{\mathbb R}
\newcommand{\A}{\mathbb A}
\newcommand{\B}{\mathbb B}
\newcommand{\C}{\mathcal C}
\newcommand{\eps}{\varepsilon}
\newcommand{\Kc}{K^\circlearrowright}
\newcommand{\Ks}{ \dt {K}}
\newcommand{\Ksc}{ \dt {K}^\circlearrowright}
\newcommand{\Sing}{\mathcal {S}}
\newcommand{\Singalt}{\mathcal {S}_{alt}}
\newcommand{\U}{\mathcal {U}}
\newcommand{\Ualt}{\mathcal {U}_{alt}}
\newcommand{\X}{\mathfrak {X}}
\newcommand{\Ac}{{\mathbb A}^\circlearrowright}
\newcommand{\Bc}{{\mathbb B}^\circlearrowright}
\newcommand{\pv}{\mathbf v}
\newtheorem{theorem}{Theorem}[section]
\newtheorem{corollary}[theorem]{Corollary}
\newtheorem{lemma}[theorem]{Lemma}
\newtheorem{proposition}[theorem]{Proposition}
\theoremstyle{definition}
\newtheorem{remark}[theorem]{Remark}
\newtheorem{definition}[theorem]{Definition}
\newtheorem{example}[theorem]{Example}
\newtheorem{alg}[theorem]{Algorithm}
\begin{document}

\maketitle
\begin{center}
\textsuperscript{1}DIETI, Universit\`a di Napoli Federico II,
via Claudio 21, 80125 Napoli, Italy\\
\texttt{alessandro.deluca@unina.it}\\
\textsuperscript{2}  Institut  Camille  Jordan, CNRS  UMR  5208,
Universit\'{e} de Lyon, Universit\'{e} Lyon  1,\\
43 boulevard du 11 novembre 1918, F69622 Villeurbanne Cedex, France\\
\texttt{zamboni@math.univ-lyon1.fr}
\end{center}

\vspace{0.5cm}

\begin{abstract} In a 1983 paper, G. Ramharter asks what are the extremal arrangements for the cyclic analogues of the  regular and semi-regular continuants first introduced by T.S. Motzkin and E.G. Straus in 1956. In this paper we answer this question by showing that for each set $\A$ consisting of positive integers $1<a_1<a_2<\cdots <a_k$  and a $k$-term partition $P: n_1+n_2 + \cdots + n_k=n,$ there exists a unique (up to reversal) cyclic word $x$ which maximizes (resp. minimizes) the regular cyclic continuant $\Kc(\cdot)$ amongst all cyclic words over $\A$ with Parikh vector $(n_1,n_2,\ldots,n_k).$ We also show that the same is true for the minimizing arrangement for the semi-regular cyclic continuant $\Ksc(\cdot).$
As in the non-cyclic case, the main difficulty is to find the maximizing arrangement for $\Ksc(\cdot)$, which is not unique in general and may depend on the integers $a_1,\ldots,a_k$ and not just on their relative order. We show that if a cyclic word $x$ maximizes $\Ksc(\cdot)$  amongst all permutations of $x,$ then it verifies a strong combinatorial condition which we call the singular property. We develop an algorithm for constructing all singular cyclic words having a prescribed Parikh vector.
% As a consequence, we show that if $k\leq 4,$ then there exists a unique (up to reversal) cyclic word $x$ which maximizes  the semi-regular cyclic continuant $\Ksc(\cdot)$  while on alphabets of size $5$ or more, the maximizing arrangement need not be unique and may actually depend on the values of the $a_i.$ 

\smallskip\noindent
\textbf{Keywords:} Cyclic continuants, Regular and semi-regular continued
fractions, Christoffel words, Singular words.
\end{abstract}

%\vspace{.05 in}

\section{Introduction}
Given a set $\A$ consisting of positive integers $a_1<a_2<\cdots <a_k$  and a $k$-term partition $P: n_1+n_2 + \cdots + n_k=n,$  find the extremal denominators of the regular and semi-regular continued fraction $[0;x_1,x_2,\ldots,x_n]$ with partial quotients $x_i\in \A$ and where each $a_i$ occurs precisely $n_i$ times in the sequence $x_1,x_2,\ldots,x_n.$ The regular continuant $K_n(x_1,x_2,\ldots,x_n)$ is defined recursively by $K_0()=1$, $K_1(x_1)=x_1$ and  \begin{equation}\label{K1} K_n(x_1,x_2,\ldots,x_n)=x_nK_{n-1}(x_1,x_2,\ldots ,x_{n-1}) + K_{n-2}(x_1,x_2,\ldots ,x_{n-2})\end{equation} and is equal to the denominator of the finite regular  continued fraction $[0; x_1,x_2,\ldots ,x_n]$. Similarly, the semi-regular continuant 
 $\Ks_n(x_1,x_2,\ldots,x_n)$
is defined recursively by $\Ks_0()=1$, $\Ks_1(x_1)=x_1$ and 
\begin{equation}\label{K2} \Ks_n(x_1,x_2,\ldots,x_n)=x_n\Ks_{n-1}(x_1,x_2,\ldots ,x_{n-1}) - \Ks_{n-2}(x_1,x_2,\ldots ,x_{n-2}).\end{equation} 
For semi-regular continuants, the digit $1$ needs to be excluded and in this case, $\Ks(x)$ is the denominator of the terminating semi-regular 
continued fraction
% \[ [x]^{ \bullet}=\cfrac{1}{x_1 -\cfrac{1}{x_2 -\cfrac{1}{
%       \begin{array}{@{}c@{}c@{}c@{}}
%         x_3 - {}\\ &\ddots\\ &&{}- \cfrac{1}{x_n}
%       \end{array}
%     }}} \]
\[ [x]^{ \bullet}=\cfrac{1}{x_1 -\cfrac{1}{x_2 -\cfrac{1}{
\ddots-\cfrac{1}{x_n}}}} \]

In the framework of the regular continuant $K_n(\cdot)$, T.S. Motzkin and E.G. Straus~\cite{MS} provided a first partial answer in the special case in which each $n_i=1.$ In \cite{Cu2}, T.W. Cusick found the maximizing arrangement when $\A=\{1,2\}.$ But the general problem was settled by G. Ramharter \cite{Ram}.
Ramharter  gave an explicit description of both extremal arrangements for the regular continuant as well as the minimizing arrangement for the semi-regular continuant (see Theorem 1 in \cite{Ram}). Ramharter also gave a description of the maximizing arrangement for the semi-regular continuant in the binary case (i.e., when $k=2)$ and showed in this case that the maximizing arrangement  is a finite Sturmian word, and furthermore that the palindromic binary maximizing arrangements for $\Ks(\cdot)$ are in one-to-one correspondence with the extremal cases of the Fine and Wilf theorem \cite{FW} with two co-prime periods (see Theorem~3 in \cite{Ram05}). In all four cases mentioned above, Ramharter showed that the extremal arrangements are unique (up to reversal) and independent of the actual values of the digits $a_i.$ However, the determination of the maximizing arrangement for $\Ks(\cdot)$ for general $k\geq 3$ turned out to be more difficult. He conjectured that for general $k$ the maximizing arrangement for $\Ks(\cdot)$ is always unique (up to reversal) and depends only on the $k$-term partition $P,$ i.e., not on the actual values of the $a_i.$ This conjecture was verified by the authors in the case of a ternary alphabet $\A$ (i.e., $k=3)$ but was also shown to fail for general $k\geq 4$ (see \cite{DeLEdZam}). 

In \cite{Ram}, Ramharter asks what are the extremal arrangements for the cyclic analogues of $K$ and $\Ks$ in the sense of  \cite{MS} (see also \cite{Ram}) defined by: 
 \[\Kc(x_1x_2\cdots x_n)=K(x_1x_2\cdots x_n)+ K(x_2\cdots x_{n-1})\]
  \[\Ksc(x_1x_2\cdots x_n)=\Ks(x_1x_2\cdots x_n)- \Ks(x_2\cdots x_{n-1}).\]
  It is easily verified that if two sequences $x,y\in \A^+$ are cyclic conjugates of one another, i.e., $x=uv$ and $y=vu$ for some sequences $u$ and $v,$ then $\Kc(x)=\Kc(y)$ and $\Ksc(x)=\Ksc(y).$
In other words, the functions $\Kc$ and $\Ksc$ are well defined on cyclic words.  
 In this paper we give an explicit description of the extremal arrangements for  $\Kc$  and the minimizing arrangement for $\Ksc.$ In each case, the arrangement is unique (up to reversal) and only depends on the $k$-term partition $P.$ Thus, as in the linear (i.e., non-cyclic) version, the main difficulty is to understand the maximizing arrangement in the case of the semi-regular continuant.
 We show that if a cyclic word $x$ maximizes $\Ksc(\cdot)$ amongst all permutations of $x,$ then $x$ verifies the following combinatorial criterion: For all factorizations $x=uv$ with $u\neq\rev u$ and $v\neq \rev v$ one has $u\prec\rev u$ if and only if $v\prec\rev v$, where $\prec$ denotes the lexicographic order on $\A^+$ induced by the usual order on $\reals$, and $\rev u$ the mirror image of $u$. We say that a cyclic word $x$ is {\it singular} if it verifies this combinatorial condition. 
  
 In the binary case, we show that associated with every vector $(n_1,n_2)$ is a unique cyclic singular word $x$ with Parikh vector  $(n_1,n_2),$ and in fact we show that $x$ is a power of a Christoffel word $C_{p.q}$ with relatively prime periods $p$ and $q.$ Thus in particular the maximizing arrangement for the cyclic semi-regular continuant is unique in the binary case.  But already on ternary alphabets, there may be several Abelian equivalent singular words.  
 
In general, given an ordered set $\A=\{a_1<a_2<\cdots <a_k\}$  and a $k$-term partition $P: n_1+n_2 + \cdots + n_k=n,$a $k$-term partition $P,$ we wish to describe all cyclic singular words on $\A$ with Parikh vector  $(n_1,n_2,\ldots,n_k).$ 
To this end we develop a general algorithm for constructing all cyclic singular words on an ordered alphabet $\A$ having a prescribed Parikh vector. The algorithm  consists of a non-commutative variant of the usual Euclidean algorithm coupled with a combinatorial algorithm which constructs the singular words from the output of the arithmetic part. This algorithm also allows us to construct all linear singular words in the sense of \cite{DeLEdZam} and hence can be used to find the maximizing arrangements for the (linear) semi-regular continuant.  
% Also as a consequence of this algorithm, we show that for $k\leq 4$ the maximizing arrangement for $\Ks(\cdot)$ is unique (up to reversal) and depends only on the  partition $P.$
In the cyclic binary case, such an arrangement is unique and in fact given by the
Christoffel word having the prescribed Parikh vector.
In contrast, for
% $k\geq 5,$
larger values of $k$,
the maximizing arrangement for $\Ksc(\cdot)$ need not be unique and may actually depend on the values $a_1,a_2,\ldots, a_k.$

\section{Preliminaries}

Let $\A$ be a finite non-empty and let $\A^+$ denote the free semigroup generated by $\A$ consisting of all finite words $x=x_1x_2\cdots x_n$ with each $x_i\in \A.$ We set $\A^*=\A^+\cup \{\varepsilon\}$ where $\varepsilon$ denotes the empty word. We let $\Ac$ denote the set of all cyclic words over $\A,$ i.e., 
$\Ac = \A^+ / \!\sim$ where $\sim$ is the equivalence relation on $\A^+$ defined by $w\sim w'$ if and only if $w=uv$ and $w'=vu$ for some $u,v\in \A^*.$
In order to avoid confusion, we will refer to the elements of $\A^+$ as {\it linear words}. Thus, each cyclic word $\omega$ is represented by one or more linear words $x.$ At times, by abuse of notation, we write $\omega=uv$ with $u,v\in \A^+$ when in fact we mean that $\omega$ admits a linear representation  which factors as $uv.$ For $x=x_1x_2\cdots x_n\in \A^+$ we put $x^*=x_nx_{n-1}\cdots x_1.$ Similarly, for $\omega\in \Ac$ we let $\omega^*\in \Ac$ denote the class of $x^*$ where $x$ is any linear representation of $\omega.$ 
For $\omega\in \A^+\cup \Ac$ and $a\in \A,$ let $|w|_a$ denote the number of occurrences of the letter $a$ in $w.$
More generally, for $u\in\A^+$, we let $|w|_u$ denote the number of occurrences of
$u$ in $w$ as a factor.
Two cyclic words $\omega$ and $\nu $ are said to be Abelian equivalent if $|\omega|_a=|\nu|_a$ for each $a\in \A.$ We let ${\mathcal X}(\omega)$ denote the {\it cyclic Abelian class} of $\omega$ consisting of all cyclic words which are Abelian equivalent to $\omega,$ i.e., the set of all permutations of $\omega.$ Thus ${\mathcal X}(\omega)$ is uniquely determined by  $(|\omega|_a)_{a\in \A}$ which is called the Parikh vector of $\omega.$  We let $\mathfrak{X}(\omega)$ denote the {\it symmetric cyclic Abelian class} of $\omega$ in which we identify each cyclic word $\nu$ with its reversal $\nu^*.$  Thus $\mathfrak{X}(\omega)$ is also completely determined by the Parikh vector of $\omega.$ For $\omega\in \Ac$ we let $\bar \omega$ denote the class of $\omega$ in $\mathfrak{X}(\omega),$ i.e., $\bar \omega =\{\omega,\omega^*\}.$

Given a linear word  $x=x_1x_2\cdots x_n\in \A^+,$  let $K(x)$ denote the regular continuant polynomial $K_n(x_1,x_2,\ldots,x_n)$ defined recursively by $K_0()=1,$ $K_1(x_1)=x_1$ and \[K_n(x_1,x_2,\ldots,x_n)=x_nK_{n-1}(x_1,x_2,\ldots ,x_{n-1}) + K_{n-2}(x_1,x_2,\ldots ,x_{n-2}).\]
We regard $K(x)$ as a formal polynomial in $x_1,x_2,\ldots ,x_n.$ If the $x_i$ are positive integers, then $K(x)$ is the denominator of the terminating regular 
continued fraction $[x]=[0; x_1,x_2,\ldots ,x_n].$ Similarly, we define the semi-regular continuant polynomial $\Ks(x)=\Ks_n(x_1,x_2,\ldots,x_n)$
defined recursively by $\Ks_0()=1,$ $\Ks_1(x_1)=x_1$ and \[\Ks_n(x_1,x_2,\ldots,x_n)=x_n\Ks_{n-1}(x_1,x_2,\ldots ,x_{n-1}) - \Ks_{n-2}(x_1,x_2,\ldots ,x_{n-2}).\]
If the $x_i$ are positive integers each strictly greater than $1,$  then $\Ks(x)$ is the denominator of the terminating semi-regular 
continued fraction \[ [x]^{ \bullet}=\cfrac{1}{x_1 -\cfrac{1}{x_2 -\cfrac{1}{
      \begin{array}{@{}c@{}c@{}c@{}}
        x_3 - {}\\ &\ddots\\ &&{}- \cfrac{1}{x_n}
      \end{array}
    }}} \]
    We recall a few key facts concerning the regular and semi-regular continuants (see for example \cite{Perron}): For each $x=x_1x_2\cdots x_n\in \A^+$ one has $K(x)=K(x^*)$ and $\Ks(x)=\Ks(x^*).$  Also, for each $m\in \{1,\ldots ,n-1\}$ one has 
    \begin{equation}\label{Preg}K(x)=K(x_1\cdots x_m)K(x_{m+1}\cdots x_n) + K(x_1\cdots x_{m-1})K(x_{m+2}\cdots x_n)\end{equation} while 
      \begin{equation}\label{Psing}\Ks(x)=\Ks(x_1\cdots x_m)\Ks(x_{m+1}\cdots x_n) - \Ks(x_1\cdots x_{m-1})\Ks(x_{m+2}\cdots x_n)\end{equation}
      where we adopt the convention that $K(x_j\cdots x_{j-1})=\Ks(x_j\cdots x_{j-1})=1.$

      Finally, if each $x_i$ is a positive integer strictly greater than $1,$ then one has 
     \begin{equation}\label{CF} [x]=\frac{K(x_2\cdots x_n)}{K(x_1x_2\cdots x_n)}\,\,\,\,\,\, \mbox{and} \,\,\,\,\,\,\, [x]^{ \bullet}=\frac{\Ks(x_2\cdots x_n)}{\Ks(x_1x_2\cdots x_n)}.\end{equation}
     
 \section{Extremal arrangements}

 We shall be interested in the cyclic analogues of $K$ and $\Ks$ in the sense of  \cite{MS} (see also \cite{Ram}) defined by: 
 \[\Kc(x_1x_2\cdots x_n)=K(x_1x_2\cdots x_n)+ K(x_2\cdots x_{n-1})\]
  \[\Ksc(x_1x_2\cdots x_n)=\Ks(x_1x_2\cdots x_n)- \Ks(x_2\cdots x_{n-1}).\]
    
It is easily verified that if $x,y\in \A^+$ are cyclic conjugates of one another, i.e., $x\sim y,$ the $\Kc(x)=\Kc(y)$ and $\Ksc(x)=\Ksc(y).$
In other words, the functions $\Kc$ and $\Ksc$ are well defined on cyclic words.  
The function $\Kc$ has also been considered in \cite{BeBoCa}, under the name {\it circular continuant}, in connection with Hopcroft's automaton minimization algorithm.

The following proposition and its proof should be compared with Theorem~3 in \cite{Ram}:

\begin{proposition}\label{val} Let $\A\subseteq \{2,3,4,\ldots \}$ and let $<$ (resp. $<_{alt})$ denote the usual (resp. alternating) lexicographic order on $\A^+$ induced by the usual order on $\reals.$ Let $\omega \in \Ac.$  Assume $\omega=uv$ with $u\neq u^*$ and $v\neq v^*,$ and let $\omega '$ be the cyclic word represented by $u^*v.$
\begin{enumerate}
\item If either $u<_{alt} u^*$ and $v^*<_{alt} v$ or $u^*<_{alt} u$ and $v<_{alt} v^*$ then $\Kc(\omega ')<\Kc(\omega ).$
\item   If either $u< u^*$ and $v^*< v$ or $u^*< u$ and $v< v^*$ then $\Ksc(\omega ')>\Ksc(\omega ).$ 
\end{enumerate}  
\end{proposition}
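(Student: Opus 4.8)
The plan is to derive both parts from a single pair of algebraic identities produced by the product formulas \eqref{Preg} and \eqref{Psing}. Fix linear representatives and write $u=u_1\cdots u_p$, $v=v_1\cdots v_q$ and $n=p+q$; the assumptions $u\neq u^{*}$ and $v\neq v^{*}$ force $p,q\geq 2$. For a non-empty word $w=w_1\cdots w_m$ put $\hat w=w_1\cdots w_{m-1}$ and $\check w=w_2\cdots w_m$, with $K(\varepsilon)=\Ks(\varepsilon)=1$ as in the preliminaries; record the reversal identities $K(w)=K(w^{*})$, $K(\hat w)=K(\check{w^{*}})$, $K(\check w)=K(\widehat{w^{*}})$, which hold verbatim for $\Ks$. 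Since $\Kc$ and $\Ksc$ are well defined on cyclic words, I evaluate $\Kc(\omega),\Ksc(\omega)$ on the representative $uv$ and $\Kc(\omega'),\Ksc(\omega')$ on $u^{*}v$; it then suffices to prove $\Kc(uv)-\Kc(u^{*}v)>0$ under the hypothesis of (1) and $\Ksc(uv)-\Ksc(u^{*}v)<0$ under that of (2).

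First I would expand each cyclic continuant twice: by \eqref{Preg} applied to $uv$ cut at the $u$--$v$ boundary and to its middle word $(uv)_2\cdots(uv)_{n-1}=\check u\,\hat v$ cut at the image of that boundary, then the same for $u^{*}v$, using the reversal identities to rewrite the continuants of the mirrored $u$-blocks. All terms cancel but two, leaving
\begin{equation}\label{diffKc}
\Kc(uv)-\Kc(u^{*}v)=\bigl(K(\hat u)-K(\check u)\bigr)\bigl(K(\check v)-K(\hat v)\bigr).
\end{equation}
Running the identical computation with \eqref{Psing} in place of \eqref{Preg} — the extra minus signs only swap the roles of the two mirrored $u$-terms — gives
\begin{equation}\label{diffKsc}
\Ksc(uv)-\Ksc(u^{*}v)=\bigl(\Ks(\check u)-\Ks(\hat u)\bigr)\bigl(\Ks(\check v)-\Ks(\hat v)\bigr).
\end{equation}

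Next I read off the sign of each factor. Since $\A\subseteq\{2,3,\dots\}$, every continuant above is positive (for $\Ks$ this is an immediate induction). By \eqref{CF} together with $K(w)=K(w^{*})$ one has $[w]=K(\check w)/K(w)$ and $[w^{*}]=K(\hat w)/K(w)$, hence $K(\hat w)-K(\check w)=K(w)\bigl([w^{*}]-[w]\bigr)$; likewise $\Ks(\check w)-\Ks(\hat w)=\Ks(w)\bigl([w]^{\bullet}-[w^{*}]^{\bullet}\bigr)$. So the two factors of \eqref{diffKc} have the signs of $[u^{*}]-[u]$ and of $[v]-[v^{*}]$, and those of \eqref{diffKsc} have the signs of $[u]^{\bullet}-[u^{*}]^{\bullet}$ and of $[v]^{\bullet}-[v^{*}]^{\bullet}$; none vanishes, since $u\neq u^{*}$ and $v\neq v^{*}$. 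Now I invoke the monotonicity of continued fractions. For the regular one, the comparison of $w$ with $w^{*}$ in $<_{alt}$ matches, in a fixed direction depending only on the sign convention adopted for $<_{alt}$, the comparison of $[w]$ with $[w^{*}]$ — this is the monotonicity underlying Theorem~3 of \cite{Ram}; under either reading, each of the two alternatives in the hypothesis of (1) makes the two factors of \eqref{diffKc} share a common sign, so \eqref{diffKc} is positive and $\Kc(\omega')<\Kc(\omega)$. For the semi-regular one I use the elementary fact that $[w]^{\bullet}$ is strictly \emph{decreasing} for the ordinary lexicographic order on equal-length words over $\{2,3,\dots\}$: writing $[w]^{\bullet}=1/T_1$ with $T_k=w_k-1/T_{k+1}$, every tail satisfies $T_k\geq 1$ and is strictly increasing in $T_{k+1}$, so — with no sign alternation — raising a partial quotient strictly increases $T_1$; hence $w<w^{*}$ is equivalent to $[w^{*}]^{\bullet}<[w]^{\bullet}$. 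Consequently each of the two alternatives in the hypothesis of (2) makes the two factors of \eqref{diffKsc} have opposite signs, so \eqref{diffKsc} is negative and $\Ksc(\omega')>\Ksc(\omega)$.

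The only genuinely delicate step is the first one: one must carry out both two-term expansions of each cyclic continuant so that the interior products match and cancel, and keep track — conceptually — of why the regular case couples $u$ against $u^{*}$ with $v^{*}$ against $v$ through the \emph{alternating} order (the two blocks being traversed with opposite orientation relative to the cut) while the semi-regular case couples them through the plain lexicographic order. Everything after it is bookkeeping together with the standard sign behaviour of (semi-)regular continued fractions recorded in \eqref{CF}.
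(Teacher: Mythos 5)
Your proof is correct and is essentially the paper's own argument: the same double expansion via \eqref{Preg}/\eqref{Psing} of the outer word and its middle word, leading to the identical key identity (the paper writes it as $([u]-[u^*])([v]-[v^*])K(u)K(v)$, which equals your $(K(\check u)-K(\hat u))(K(\check v)-K(\hat v))$ via \eqref{CF}), followed by the same sign analysis based on the monotonicity of $[\cdot]$ with respect to $<_{alt}$ and of $[\cdot]^{\bullet}$ with respect to $<$. The only difference is cosmetic (you substitute \eqref{CF} at the end rather than at the start, and you spell out the semi-regular monotonicity slightly more explicitly than the paper does).
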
  

\begin{proof} Let us write $u=u_1u_2\cdots u_m,$  $v=v_1v_2\cdots v_n$ and $x= u_1u_2\cdots u_m v_1v_2\cdots v_n.$
By application of (\ref{Preg}) and (\ref{CF}) we have
\begin{align*}K(uv)&=K(u)K(v) + K(u_1\cdots u_{m-1})K(v_2\cdots v_n)\\
&=K(u)K(v) + K(u_{m-1}\cdots u_1)K(v_2\cdots v_n)\\
&=K(u)K(v) + [u^*]K(u^*)[v]K(v)\\
&= K(u)K(v) + [u^*][v]K(u)K(v).
\end{align*}
Replacing $u$ by $u^*$ gives  \[K(u^*v)=K(u)K(v) +[u][v]K(u)K(v).\]
Thus
\[K(u^*v)-K(uv)= ([u]-[u^*])[v]K(u)K(v).\]
Similarly
\begin{align*}K(u_2\cdots u_mv_1\cdots v_{n-1}) &= K(u_2\cdots u_m)K(v_1\cdots v_{n-1})+ K(u_2\cdots u_{m-1})K(v_2\cdots v_{n-1})\\
&= K(u_2\cdots u_m)K(v_{n-1}\cdots v_{1})+ K(u_{m-1}\cdots u_2)K(v_2\cdots v_{n-1})\\
&=[u]K(u)[v^*]K(v) + K(u_{m-1}\cdots u_2)K(v_2\cdots v_{n-1})
\end{align*}
while 
\[K(u_{m-1}\cdots u_1v_1\cdots v_{n-1}) = [u^*]K(u)[v^*]K(v)+ K(u_{m-1}\cdots u_2)K(v_2\cdots v_{n-1}) \]
so taking the difference gives
\[K(u_{m-1}\cdots u_1v_1\cdots v_{n-1}) - K(u_2\cdots u_mv_1\cdots v_{n-1}) = ([u^*]-[u])[v^*]K(u)K(v).\]
Finally
\begin{align*}\Kc(\omega ')-\Kc(\omega )&=([u]-[u^*])[v]K(u)K(v) +  ([u^*]-[u])[v^*]K(u)K(v)\\
&=([u]-[u^*])([v]-[v^*])K(u)K(v).
\end{align*}
But  the assumption that $u<_{alt} u^*$ and $v^*<_{alt} v$ or $u^*<_{alt} u$ and $v<_{alt} v^*$ implies that $\mbox{sgn}([u]-[u^*])=-\mbox{sgn}([v]-[v^*])$ and therefore $\Kc(\omega ')-\Kc(\omega )<0$ as required. 

As for 2. we have
\[\Ks(uv)=\Ks(u)\Ks(v)-[u^*]^\bullet [v]^\bullet\Ks(u)\Ks(v)\]
while
\[\Ks(u^*v)=\Ks(u)\Ks(v)-[u]^\bullet[v]^\bullet\Ks(u)\Ks(v).\]
Similarly
\[\Ks(u_2\cdots u_mv_1\cdots v_{n-1})=[u]^\bullet[v^*]^\bullet\Ks(u)\Ks(v) -\Ks(u_2\cdots u_{m-1})\Ks(v_2\cdots v_{n-1})\]
while
\[\Ks(u_{m-1}\cdots u_1v_1\cdots v_{n-1})=[u^*]^\bullet[v^*]^\bullet\Ks(u)\Ks(v)-\Ks(u_{m-1}\cdots u_2)\Ks(v_2\cdots v_{n-1}).\]
Therefore
\begin{align*}
\Ksc(\omega ')-\Ksc(\omega )&= \Ks(u^*v)-\Ks(u_{m-1}\cdots u_1v_1\cdots v_{n-1}) \\&- (\Ks(uv) -\Ks(u_2\cdots u_mv_1\cdots v_{n-1}))\\
&=\Ks(u^*v) - \Ks(uv) \\&+ \Ks(u_2\cdots u_mv_1\cdots v_{n-1})-\Ks(u_{m-1}\cdots u_1v_1\cdots v_{n-1})\\
&=([u^*]^\bullet - [u]^\bullet)[v]^\bullet \Ks(u)\Ks(v) + ([u]^\bullet-[u^*]^\bullet)[v^*]^\bullet\Ks(u)\Ks(v)\\
&=([u^*]^\bullet - [u]^\bullet)([v]^\bullet-[v^*]^\bullet)\Ks(u)\Ks(v).
\end{align*}
The assumption $u< u^*$ and $v^*< v$ or $u^*< u$ and $v< v^*$ implies that $\mbox{sgn}([u]^\bullet-[u^*]^\bullet)=-\mbox{sgn}([v]^\bullet-[v^*]^\bullet)$ and therefore $\Ksc(\omega ')-\Ksc(\omega )>0$ as required. \end{proof}

Henceforth, let $\A$ be an ordered finite set. Given $w\in \A^+\cup \Ac$ we let $\max(w)$ (resp. $\min(w))$ denote the largest (resp. smallest) letter $j\in \A$ occurring in $w.$ Let $\prec$ (resp. $\prec_{alt})$ denote the usual (resp. alternating) lexicographic order on $\A^+$ induced by the order on $\A.$ 
In defining $\prec$ we adopt the convention that $u\prec v$ whenever $v$ is a proper prefix of $u$ which is of course opposite to the true dictionary order. 
Similarly, if $v$ is a proper prefix of $u$ and $|v|$ is even, then $u\prec_{alt} v,$ while if $|v|$ is odd then $v\prec_{alt}u.$
For $u,v \in \A^+$ we write $u\preceq v$ (resp. $u\preceq_{alt} v)$ to mean either $u=v$ or $u\prec v$ (resp. $u\prec_{alt} v.)$ 

Let $\omega$ be a cyclic word over the alphabet $\A.$ A factorization $\omega=uv$ with $u,v\in \A^+$ and with $u\neq u^*$ and $v\neq v^*$ is said to be {\it synchronizing} if the inequality between $u$ and $u^*$ is the same as the inequality between $v$ and $v^*,$ i.e., $u\prec u^*$ if and only if $v\prec v^*.$ Otherwise the factorization is  said to be non-synchronizing.  For example, one can check that for the cyclic word $\omega =aaabaab$ over the ordered alphabet  $\{a< b\},$ all factorizations of $\omega$  as the product of two non-palindromes are synchronizing. On the other hand the cyclic word $\omega '=aaaabab$ contains both synchronizing factorizations, for example  $(aaaab)(ab),$ as well as non-synchronizing factorizations, for example $(aab)(abaa).$
We are only interested in factorizations of the form $\omega=uv$ in which both $u$ and $v$ are non-palindromes. For this reason, henceforth whenever we write $\omega=uv$ it shall be implicitly assumed that $u\neq u^*$ and $v\neq v^*.$  Typically a cyclic word will admit both synchronizing and non-synchronizing factorizations. We note that if a factorization $\omega=uv$ is synchronizing, then so is the factorization $\omega^*=u^*v^*.$ Similarly we say a factorization $\omega=uv$ is {\it alt-synchronizing} if $u\prec_{alt} u^*$ if and only if $v\prec_{alt} v^*.$ 

Set 
\[\Sing(\A) =\{\omega \in \Ac : \, \mbox{all factorizations $\omega=uv$ are synchronizing}\}\]
\[\Singalt(\A)=\{\omega \in \Ac : \, \mbox{all factorizations $\omega=uv$ are alt-synchronizing}\}\]
\[\U(\A)=\{\omega \in \Ac : \, \mbox{no factorization $\omega=uv$ is synchronizing}\}\]
\[\Ualt(\A)=\{\omega \in \Ac : \, \mbox{no factorization $\omega=uv$ is alt-synchronizing}\}\]
Given a cyclic Abelian class $\C$ over the ordered alphabet $\A,$ we put $\Sing(\C)=\C\cap \Sing(\A)$ and if $\X$ denotes the symmetric cyclic Abelian class corresponding to $\C,$ then we put $\Sing(\X)=\{\bar {\omega}: \omega \in \Sing(\C)\}.$ The sets $\Singalt(\C), \Singalt(\X), \U(\C), \U(\X), \Ualt(\C)$ and $\Ualt(\X)$ are defined analogously.

\noindent As an immediate consequence of Proposition~\ref{val} we have:

\begin{proposition}\label{maxmin} Let $\A\subseteq \{2,3,4,\ldots \}$ where we regard $\A$ to be ordered according to the usual order $2<3<4<\cdots .$  Let $\C$ be a cyclic Abelian class over $\A$ and let $\omega \in \C.$  
\begin{enumerate}
\item If  $\Kc(\omega) =\min\{\Kc(\nu): \nu \in \C\},$ then $\omega\in \Singalt (\C).$ %then all factorizations $w'=uv$ are alt-synchronizing.
\item  If  $\Kc(\omega) =\max\{\Kc(\nu): \nu \in \C\},$ then $\omega\in \Ualt(\C)$ %no factorization $w'=uv$ is alt-synchronizing.
\item If  $\Ksc(\omega) =\min\{\Ksc(\nu): \nu \in \C\},$ then $\omega\in \U (\C).$ %no factorization $w'=uv$ is synchronizing.
\item If  $\Ksc(\omega) =\max\{\Ksc(\nu): \nu \in \C\},$ then $\omega\in \Sing(\C).$ %all factorizations $w'=uv$ are synchronizing.
\end{enumerate}
\end{proposition}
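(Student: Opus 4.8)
The plan is to derive all four items by contraposition, using Proposition~\ref{val} as a black box. Two preliminary remarks streamline everything. First, since $\A\subseteq\{2,3,4,\ldots\}$ is ordered by the usual order of the integers, the orders $\prec$ and $\prec_{alt}$ on $\A^+$ coincide with the orders $<$ and $<_{alt}$ appearing in Proposition~\ref{val}, so that proposition applies verbatim. Second, for any factorization $\omega=uv$ into non-palindromes, the cyclic word $\omega'$ represented by $u^*v$ has the same Parikh vector as $\omega$, since $u$ and $u^*$ use the same letters with the same multiplicities; hence $\omega'\in\C$. This is the only use of the hypothesis $\omega\in\C$, and it is exactly what converts an inequality between $\Kc(\omega)$ and $\Kc(\omega')$ (respectively $\Ksc$) into a contradiction with extremality of $\omega$ in $\C$.

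Next I would restate Proposition~\ref{val} in the language of (alt-)synchronizing factorizations. Because $u,v$ are non-palindromes, the trichotomy reduces to $u\prec u^*$ or $u^*\prec u$ (and similarly for $v$), so a factorization $\omega=uv$ fails to be alt-synchronizing precisely when one of the two alternatives in the hypothesis of part~1 of Proposition~\ref{val} holds, and it fails to be synchronizing precisely when one of the two alternatives in the hypothesis of part~2 holds. Thus Proposition~\ref{val} reads: if $\omega=uv$ is not alt-synchronizing then $\Kc(\omega')<\Kc(\omega)$, and if $\omega=uv$ is not synchronizing then $\Ksc(\omega')>\Ksc(\omega)$, where $\omega'$ is the cyclic word represented by $u^*v$. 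With this, items~1 and~4 are immediate: if $\omega\notin\Singalt(\C)$ there is a non-alt-synchronizing factorization $\omega=uv$, giving $\Kc(\omega')<\Kc(\omega)$ with $\omega'\in\C$, so $\omega$ does not minimize $\Kc$ on $\C$; and if $\omega\notin\Sing(\C)$ there is a non-synchronizing factorization, giving $\Ksc(\omega')>\Ksc(\omega)$ with $\omega'\in\C$, so $\omega$ does not maximize $\Ksc$ on $\C$.

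For items~2 and~3 I would apply the involution $uv\mapsto u^*v$ a second time so as to flip the direction of the inequality. The key observation is that a factorization $\omega=uv$ is synchronizing if and only if the factorization $\omega'=(u^*)(v)$ of $\omega'$ is non-synchronizing — one only has to compare $u^*$ with $(u^*)^*=u$ — and the same holds for the alt-versions; moreover the cyclic word associated with the factorization $\omega'=(u^*)(v)$ via the construction above is represented by $(u^*)^*v=uv$, i.e.\ it is $\omega$ itself. Hence, if $\omega\notin\U(\C)$, picking a synchronizing factorization $\omega=uv$ and applying the restated part~2 of Proposition~\ref{val} to $\omega'$ yields $\Ksc(\omega)>\Ksc(\omega')$ with $\omega'\in\C$, so $\omega$ is not minimal for $\Ksc$ on $\C$; and if $\omega\notin\Ualt(\C)$, picking an alt-synchronizing factorization and applying the restated part~1 to $\omega'$ yields $\Kc(\omega)<\Kc(\omega')$ with $\omega'\in\C$, so $\omega$ is not maximal for $\Kc$ on $\C$.

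I do not expect a real obstacle here — as the paper says, the statement is an immediate consequence of Proposition~\ref{val} — so the work is entirely in the bookkeeping above: checking that $\prec,\prec_{alt}$ agree with $<,<_{alt}$ on $\A$, that $u^*v$ stays in the Abelian class $\C$, and that the negation of the (alt-)synchronizing condition is exactly the disjunctive hypothesis of Proposition~\ref{val}. The only mildly delicate point is to keep straight that in items~2 and~3 one invokes Proposition~\ref{val} ``backwards'', from $\omega'$ to $\omega$, so that the inequality points the opposite way compared with items~1 and~4; organising the proof around the symmetric implication ``$\omega=uv$ not (alt-)synchronizing $\Rightarrow$ strict inequality'' together with its behaviour under $uv\mapsto u^*v$ makes this transparent.
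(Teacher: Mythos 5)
Your proof is correct and fills in exactly the argument the paper leaves implicit (the paper states the proposition as "an immediate consequence of Proposition~\ref{val}" with no written proof): contraposition via the non-(alt-)synchronizing hypotheses of Proposition~\ref{val}, plus the observation that $u^*v$ stays in $\C$ and that applying the involution to an (alt-)synchronizing factorization yields a non-(alt-)synchronizing one, which handles items~2 and~3. No gaps.
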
 

In particular, if $\A$ is any ordered alphabet and $\X$ any symmetric cyclic Abelian class over $\A,$ the sets $\Sing(\X),$ $\Singalt(\X),$
$\U(\X)$ and $\Ualt(\X)$ are each non-empty. We will next show that $|\Singalt(\X)|=|\Ualt (\X)|=|\U(\X)|=1.$ Thus in particular, if $\A\subseteq \{2,3,4,\ldots,\},$ then each cyclic Abelian class $\C$ over $\A$ contains a unique (up to reversal) cyclic word $\omega$ which minimizes $\Kc,$ i.e., $\Kc(\omega) =\min\{\Kc( \nu): \nu \in \C\}.$ Moreover, up to word isomorphism, $\omega$ is independent of the actual choice of $\A.$

\begin{lemma}\label{minreg00}Let $\omega \in \Singalt(\A).$ Put $a=\min(\omega)$ and $d=\max(\omega).$ Then either every occurrence of $a$ in $\omega$ is followed (resp. preceded) by $d$ or every occurrence of $d$ in $\omega$ is followed (resp. preceded) by $a.$
\end{lemma}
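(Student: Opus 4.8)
The plan is to argue by contraposition: assuming $\omega$ fails the asserted property, I will exhibit a non-alt-synchronizing factorization $\omega=uv$, so that $\omega\notin\Singalt(\A)$. If $a=d$ there is nothing to prove, since then $\omega$ is a power of a single letter, has no factorization into two non-palindromes, lies in $\Singalt(\A)$ vacuously, and every occurrence of $a$ is trivially followed and preceded by $d=a$; so I will assume $a<d$. It is also enough to treat the ``followed by'' alternative: a factorization $\omega=uv$ is alt-synchronizing exactly when the factorization $\omega^{*}=v^{*}u^{*}$ is (since, for a non-palindrome $w$, the relation $w^{*}\prec_{alt}w$ is the negation of $w\prec_{alt}w^{*}$), so $\Singalt(\A)$ is closed under reversal, and the ``preceded by'' alternative for $\omega$ then follows by applying the ``followed by'' alternative to $\omega^{*}$.

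So I will suppose, for contradiction, that some occurrence of $a$ is followed by a letter $x\neq d$ (hence $x<d$) and some occurrence of $d$ is followed by a letter $y\neq a$ (hence $y>a$). Writing $\omega=\omega_{1}\cdots\omega_{n}$ cyclically, with indices read mod $n$, so that $\omega_{p}=a$, $\omega_{p+1}=x$, $\omega_{q}=d$, $\omega_{q+1}=y$, I would then cut $\omega$ into the two complementary arcs
\[u=\omega_{q+1}\omega_{q+2}\cdots\omega_{p},\qquad v=\omega_{p+1}\omega_{p+2}\cdots\omega_{q},\]
so that $\omega=uv$. The point of this particular cut is that $u$ begins with $y$ and ends with $a$, while $v$ begins with $x$ and ends with $d$.

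The remaining verifications are routine. First $\{p,p+1\}\cap\{q,q+1\}=\varnothing$ --- for instance $p=q+1$ would force $y=\omega_{p}=a$, and $p+1=q$ would force $x=\omega_{q}=d$ --- so these are four distinct positions, $n\geq4$, and $|u|,|v|\geq2$; moreover $u$ and $v$ are non-palindromes, since $u$ starts with $y$ and ends with $a\neq y$, and $v$ starts with $x$ and ends with $d\neq x$. Hence $\omega=uv$ is an admissible factorization. Comparing $u$ with $u^{*}$, they already differ in position $1$ (which is odd), where $u$ carries $y$ and $u^{*}$ carries $a<y$; thus $u^{*}\prec_{alt}u$, i.e.\ $u\not\prec_{alt}u^{*}$. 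Comparing $v$ with $v^{*}$, they already differ in position $1$, where $v$ carries $x$ and $v^{*}$ carries $d>x$; thus $v\prec_{alt}v^{*}$. So exactly one of $u\prec_{alt}u^{*}$ and $v\prec_{alt}v^{*}$ holds, the factorization $\omega=uv$ is not alt-synchronizing, and $\omega\notin\Singalt(\A)$, a contradiction.

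The only genuine idea here is the choice of cut: one wants $u$ and $v$ each to be anchored at both ends by the extreme letters $a$ and $d$, so that each alternating-lexicographic comparison against the mirror image is decided immediately at position~$1$; the hypothesis --- an occurrence of $a$ not followed by $d$ together with an occurrence of $d$ not followed by $a$ --- is exactly what makes the two ends of each arc pull in opposite directions. I expect the only slightly delicate point to be the short-word bookkeeping (the disjointness of $\{p,p+1\}$ and $\{q,q+1\}$, hence $|u|,|v|\geq2$ and the non-palindromicity of $u$ and $v$), which is why I would settle it before anything else.
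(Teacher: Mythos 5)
Your proof is correct and follows essentially the same route as the paper: both arguments cut the cyclic word immediately after the offending occurrences of $a$ and $d$, so that each factor is anchored by $a$ at one end and $d$ at the other and the alternating comparison with its reversal is decided at position $1$, yielding a non-alt-synchronizing factorization. Your version is slightly more explicit about the degenerate cases (distinctness of the four positions, $a=d$, and the reduction of "preceded by" to "followed by" via reversal), but the key idea is identical.
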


\begin{proof}We first note that if $ab$ and $dc$ are factors of $\omega$ with $b,c\in \A,$ then either $b=d$ or $c=a.$ In fact, we may write $\omega =uv$ where $u$ begins in $b$ and ends in $d$ and $v$ begins in $c$ and ends in $a.$ Thus, if $b\neq d$ then $u\prec_{alt} u^*$ and hence $v\preceq_{alt}v^*$ which implies that $c=a.$  It follows from this that either every occurrence of $a$ in $\omega$ is an occurrence of $ad$ or every occurrence of $d$ in $\omega$ is an occurrence of $da.$ Similarly, if $ba$ and $cd$ are factors of $\omega$ with $b,c\in \A,$ then either $b=d$ or $c=a.$
Thus either every occurrence of $a$ in $\omega$ is preceded by $d$ or every occurrence of $d$ in $\omega$ is preceded by $a.$ \end{proof}

\begin{lemma}\label{minreg0} Let $\omega \in \Singalt(\A).$ Put $a=\min(\omega)$ and $d=\max(\omega).$ Then $\omega$ or $\omega^*$ admits a linear representation of the form $(da)^Nz$ for some $N\geq 1,$ $z\in \A^*$ such that:
\begin{enumerate}
\item[i)] $da$ is not a factor of $z.$
\item [ii)] If $z=va$ with $v\in \A^*$  then $v^*\preceq_{alt} v.$
\item [iii)] If $z=dv$  with $v\in \A^*$  then $v^*\preceq_{alt} v.$
\item[iv)] $|z|_d|z|_a\leq 1$ and if $|z|_d\geq 1$ then $z$ begins in $d$ while if $|z|_a\geq 1,$ then $z$ ends in $a.$
\end{enumerate}
\end{lemma}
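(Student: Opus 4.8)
Throughout, note that $\Singalt(\A)$ is closed under reversal: a factorization $\omega^{*}=u'v'$ into non‑palindromes corresponds to the factorization $\omega=(v')^{*}(u')^{*}$, and since $w$ and $w^{*}$ always have the same length, $\prec_{alt}$ totally orders $\{w,w^{*}\}$ for any non‑palindrome $w$, so each of the two factorizations is alt‑synchronizing if and only if the other is. If $a=d$ the statement is immediate (take $z\in\{\varepsilon,a\}$), so assume $a<d$. I would first observe that $ad$ or $da$ is a factor of $\omega$: otherwise no occurrence of $a$ is followed by $d$ and no occurrence of $d$ is followed by $a$, contradicting Lemma~\ref{minreg00}. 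Replacing $\omega$ by $\omega^{*}$ if necessary, I henceforth assume that $da$ is a factor of $\omega$.

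Let $N\ge 1$ be maximal such that $(da)^{N}$ occurs as a factor of some linear representation of $\omega$. If $2N=|\omega|$ then $\omega=(da)^{N}$ cyclically and the conclusion holds with $z=\varepsilon$; otherwise fix an occurrence of a maximal run $(da)^{N}$ and write $\omega=(da)^{N}z$ with $|z|\ge1$. Maximality of $N$, via the cyclic identities $(da)^{N}\cdot da\,z'=(da)^{N+1}z'$ and $(da)^{N}z''\cdot da\sim(da)^{N+1}z''$, forces $z$ to begin with neither $da$ nor, reading the other way, to end with $da$.

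The heart of the matter is property~(i), and the tool — used here and, in refined form, for the other properties — is that $\omega\in\Singalt(\A)$ admits no non‑synchronizing factorization. Suppose $da$ occurred inside $z$. Then $\omega$ contains a second maximal run and can be written $\omega=(da)^{N}\alpha(da)^{M}\beta$ cyclically, $1\le M\le N$, with $\alpha,\beta$ nonempty and none of them beginning or ending with $da$ (all forced by maximality of the two runs). Using Lemma~\ref{minreg00} to determine the letters immediately bordering each run — each of these four letters must be $a$ or $d$ — I would cut $\omega$ appropriately near the two runs into non‑palindromes $u,v$ whose comparisons $u$ versus $u^{*}$ and $v$ versus $v^{*}$ are forced to have opposite signs (the irregular stretch coming from $\alpha$ forcing one sign, the one coming from $\beta$ the other), contradicting $\omega\in\Singalt(\A)$; hence $da$ is not a factor of $z$. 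I expect organizing this construction — a case analysis on which of the two runs is longer and on the first and last letters of $\alpha$ and $\beta$ — to be the principal difficulty of the proof.

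The remaining properties are obtained by refining the same idea. For~(iv): if $z$ were to contain an interior $a$ or $d$, or two $d$'s together with two $a$'s, one could once more produce a non‑synchronizing factorization, using Lemma~\ref{minreg00} to know that these extra letters must be flanked by $a$'s or $d$'s; moreover, in whichever case of Lemma~\ref{minreg00} we are in, the single $d$ (resp.\ $a$) that $z$ may still contain cannot sit at the wrong end of $z$, since it would there be cyclically followed (resp.\ preceded) by a copy of itself, contradicting that case. Running through the four cases, and passing to $\omega^{*}$ where that is advantageous, then yields $|z|_{d}|z|_{a}\le1$ with $z$ beginning in $d$ when $|z|_{d}\ge1$ and ending in $a$ when $|z|_{a}\ge1$. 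For~(ii) and~(iii): the factorization $\omega\sim(da)^{N}z$ together with $(da)^{N}\succ_{alt}((da)^{N})^{*}$ gives $z\succeq_{alt}z^{*}$; combining this with the factorization obtained by rotating $\omega$ to split off the single extremal letter of $z$, and with the sharp description of $z$ from~(iv) (whereby $v$ is, apart from one boundary letter, a word over the letters strictly between $a$ and $d$), one deduces $v^{*}\preceq_{alt}v$ in case~(ii), and symmetrically in case~(iii); a few degenerate sub‑cases — a palindromic piece, or $z$ free of $d$'s, respectively of $a$'s — are disposed of by direct comparison.
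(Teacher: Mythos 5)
Your overall strategy is the same as the paper's: use Lemma~\ref{minreg00} to get an occurrence of $da$ in $\omega$ or $\omega^*$, show that all such occurrences are clustered by exhibiting a non-alt-synchronizing factorization otherwise, and then pin down $z$. However, the two hardest steps are left as declarations of intent, and your route to (ii)--(iii) rests on an idea that cannot work. For property (i) --- which you yourself call the principal difficulty --- you say you ``would cut $\omega$ appropriately near the two runs into non-palindromes $u,v$ whose comparisons are forced to have opposite signs,'' but the cut is never produced. This is exactly where the content lies: the paper needs a two-stage argument (from a factor $rsdaxr's'da$, one alt-synchronization forces $s=a$ or $s'=a$; then Lemma~\ref{minreg00} forces $r'=d$ or $s=a$; then a second alt-synchronization forces $r=d$), and nothing in your sketch shows that the case analysis on the bordering letters actually closes. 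The same criticism applies to (iv), where ``running through the four cases \dots yields'' the claim without the cases being run.

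More seriously, conditions (ii) and (iii) are not consequences of $\omega\in\Singalt(\A)$ for a fixed representative, so they cannot be ``deduced'' as you propose. If $z=va$ with $v\neq v^*$, then $\omega=(da)^Nva$ and $\omega^*\sim(da)^Nv^*a$, and exactly one of the two satisfies (ii); since the alt-singular property is reversal-invariant, no synchronization argument applied to $\omega$ alone can single out the correct one. Concretely, the only cyclic factorization isolating $v$ is $\omega=\bigl(a(da)^N\bigr)\cdot v$, whose left part is a palindrome, so alt-synchronization gives no information about $v$ versus $v^*$; and the factorization $(da)^N\cdot z$ compares $z$ with $z^*$, not $v$ with $v^*$. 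The paper handles this differently: it observes that (ii) and (iii) cannot both fail, because $z$ cannot both begin with $d$ and end with $a$ (that would contradict the maximality of $N$), and that if one of them fails one may replace $\omega$ by $\omega^*$ --- which replaces $v$ by $v^*$ precisely because $a(da)^N$ (resp.\ $(da)^Nd$) is a palindrome --- without destroying (i) or the other condition. Some such ``choose the right representative'' step is indispensable here, and your sketch omits it.
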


\begin{proof} 
By Lemma~\ref{minreg00} either $ad$ is a factor of $\omega$ or $da$ is a factor of  $\omega.$ Thus $da$ is a factor of $\omega$ or of $\omega^*.$ 
We next claim that if $da$ is a factor of $\omega$ (resp. $\omega^*)$, then all the occurrences of $da$ in $\omega$ (resp. $\omega^*)$ are clustered together. In fact, suppose $\omega$ (resp. $\omega^*)$ contains a factor of the form $rsdaxr's'da$ with $x\in \A^*.$ We will show that either $rs=da$ or $r's'=da.$ We begin by showing that either $s=a$ or $s'=a.$ We can write $\omega=uv$ with $u=axr's'$ and where $v$ begins in $da$ and ends in $sd.$ If $s'\neq a,$ then $u\prec_{alt} u^*$ whence $v\preceq_{alt} v^*$ which implies $s=a.$ Thus without loss of generality, we may assume that $s'=a.$ Then by Lemma~\ref{minreg00}, either $r'=d$ or $s=a.$ If  $r'=d$ then we are done, thus we may suppose that $r'\neq d$ whence $s=a.$  So $\omega$ (resp. $\omega^*$) contains the factor $radaxr'ada.$ So we may write $\omega=uv$ with $u=daxr'$  and where $v$ begins in $ad$ and ends in $ra.$ 
Since $u^*\prec_{alt} u$ we have $v^* \preceq_{alt} v$ which implies $r=d$ as required.  Similarly, all occurrences of $ad$ in $\omega$ are contiguous. 

It follows that we may represent  $\omega$ (or $\omega^*$ or both) by a linear word of the form $(da)^Nz$ where $da$ is not a factor of $z.$ Let us pick such a representative with $N$ maximal. Without loss of generality we may suppose that $\omega =(da)^Nz$ with $da$ not a factor of $z$ and if $\omega^*=(da)^Mz'$ with $da$ not a factor of $z',$ then $M\leq N.$ 

Let us now turn our attention to conditions ii) and iii). Note that $z$ cannot both begin in $d$ and end in $a$ otherwise we would have $\omega^*=(da)^{N+1}z'$ contradicting the maximality assumption on the exponent $N.$ Thus conditions  ii) and iii) cannot both fail. If both conditions ii) and iii) are verified, then we are done. If condition ii) fails, then we replace $z$ by $z'=v^*a.$ Note that since $a(da)^N$ is a palindrome, replacing the prefix $v$ of $z$ by $v^*$ amounts to replacing $\omega$ by $\omega^*.$ In particular, condition i) is still verified. Note also that $z'$ does not begin in $d$ (since $da$ does not occur in $z)$ whence conditions i),  ii) and  iii) are now verified. Similarly, if condition iii) fails, then we replace $z$ by $z'=dv^*.$ Since $(da)^Nd$ is a palindrome, replacing the suffix $v$ of $z$ by $v^*$ amounts to replacing $\omega$ by $\omega^*.$ Note $z'$ does not end in $a$ (since $da$ does not occur in $z)$ and so conditions i), ii) and  iii) are now verified.  

With regards to condition iv), we will show that $|z|_d\geq 1 \Longrightarrow |z|_a\leq 1$ and  $|z|_a\geq 1 \Longrightarrow |z|_d\leq 1.$
So assume first that  $|z|_d\geq 1.$  We will show that $|z|_a\leq 1.$ We first claim $z$ begins in $d.$  If not,  then we have an occurrence of $a$ in $\omega$ not followed by $d$ whence by Lemma~\ref{minreg00} every occurrence of $d$ in $\omega$ must be followed by $a.$ This implies that $z$ must contain an occurrence of $da$ which contradicts condition i). Thus $z$ must begin in $d.$ Now suppose to the contrary that $|z|_a>1.$ Since the prefix $d$ of $z$ is not followed by $a$ every occurrence of $a$ in $\omega$ must be followed by $d.$ In particular, since $|z|_a>1$ there must be an occurrence of $ad$ in $z.$ So we can write $az=adxady$ with $x,y \in \A^*$ which implies that either $x=\varepsilon$ or $x$ is a power of $ad.$ Either way this contradicts i).  

Similarly, assume $|z|_a\geq 1$ and we will show that $|z|_d\leq 1.$ If $z$ does not end in $a$ then we have an occurrence of $d$ in $\omega$ which is not preceded by $a$ and hence every occurrence of $a$ in $\omega$ must be preceded by $d.$ So this produces an occurrence of $da$ in $z$ contradicting i).
So $z$ must end in $a$ and this $a$ is not preceded by $d.$ Whence every occurrence of $d$ in $\omega$ must be preceded by $a.$ Now assume to the contrary that $|z|_d>1.$ This produces an occurrence of $ad$ in $z.$ So as before, this leads to an occurrence of $da$ in $z$ which contradicts i). 
Our proof also shows that if $|z|_d\geq 1$ then $z$ begins in $d$ and if $|z|_a\geq 1$ then $z$ ends in $a.$ \end{proof}

\begin{lemma}\label{minreg1} Let $\omega \in \Singalt(\A).$ Put $a=\min(\omega)$ and $d=\max(\omega).$ Then $\omega$ or $\omega^*$ admits a linear representation of the form $dx$ where $x$ verifies the following prefix/suffix condition: For every proper prefix (resp. proper suffix) $u$ of $x$ one has $u\preceq_{alt}u^*$ (resp. $u^*\preceq_{alt} u).$
\end{lemma}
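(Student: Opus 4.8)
The plan is to deduce the statement from Lemma~\ref{minreg0} by normalizing the representative it produces. If $\min(\omega)=\max(\omega)$, then $\omega$ is a power of the single letter $a$ and $x=a^{|\omega|-1}$ works, so assume $a<d$. By Lemma~\ref{minreg00}, after replacing $\omega$ by $\omega^{*}$ if necessary, $da$ is a factor of $\omega$; then Lemma~\ref{minreg0} lets me write $\omega=(da)^{N}z$ with $N\ge 1$, $da\notin\Fac(z)$, and conditions (i)--(iv) in force. Moreover, as observed in the proof of Lemma~\ref{minreg0}, maximality of $N$ forbids $z$ from simultaneously beginning with $d$ and ending with $a$; combined with (iv) this forces $z$ to contain no $a$, or to contain no $d$ --- the only extra input I shall use beyond Lemma~\ref{minreg0} as stated.

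The representative $(da)^{N}z$ is usually \emph{not} of the required shape: for instance, over $\{a<b<d\}$ the cyclic word $dadab=(da)^{2}b$ lies in $\Singalt$, yet the representative $d\cdot adab$ fails, since the proper suffix $ab$ of $adab$ violates the suffix condition ($ba\not\preceq_{alt}ab$). The remedy is to break up the block $(da)^{N}$ around $z$: for each $0\le i\le N-1$ the word $(da)^{N-i}z(da)^{i}$ represents the same cyclic word $\omega$, and I will take the balanced one,
\[\omega=(da)^{\lceil N/2\rceil}\,z\,(da)^{\lfloor N/2\rfloor}=d\,x,\qquad x=a\,(da)^{\lceil N/2\rceil-1}\,z\,(da)^{\lfloor N/2\rfloor}.\]

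It remains to verify the prefix and suffix conditions for this $x$. Since $x$ begins and ends with the minimal letter $a$, for a proper prefix $u$ the reverse $u^{*}$ begins with the last letter of $u$; if that letter is $>a$, then $u$ and $u^{*}$ part already at position $1$, with $u$ carrying the minimal letter, so $u\prec_{alt}u^{*}$ --- and dually for suffixes. Hence the only real work is for prefixes (resp. suffixes) that begin \emph{and} end with $a$. Such a prefix is a palindrome $a(da)^{j}$ (nothing to prove), or has the form $a(da)^{\lceil N/2\rceil-1}z'$ with $z'$ a prefix of $z$ ending in $a$ --- which forces $z$ to contain an $a$, hence by the dichotomy above no $d$, so $z=va$ with $v^{*}\preceq_{alt}v$ by (ii) --- or the form $a(da)^{\lceil N/2\rceil-1}z(da)^{j}$ with $1\le j<\lfloor N/2\rfloor$.

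In each non-palindromic case the idea is to unfold the comparison of $u$ with $u^{*}$ one or two positions past the leading $a$: the $(da)$-block immediately following that $a$ puts $d$ at position $2$ of $u$, and one shows that the letter of $u^{*}$ in that position is strictly smaller --- here conditions (ii)/(iii) are used, together with the dichotomy, which governs whether the side of $z$ away from this block carries a $d$ --- so the comparison is decided at position $2$. The residual cases in which position $2$ is itself a tie should be reduced, via a factorization $\omega=u(wd)$ or $\omega=(dv)u$, to the alt-synchronizing property of $\omega$ and conditions (ii)--(iv). Suffixes are treated symmetrically, interchanging the two $(da)$-blocks. I expect the main obstacle to be exactly this last reduction: tracking the parity of $\prec_{alt}$ while descending through the leading $a$'s and feeding in the structural conditions on $z$; and, underneath it, confirming that the balanced split is the correct normalization for \emph{every} $\omega\in\Singalt(\A)$ --- small examples strongly suggest so, while an unbalanced split demonstrably fails.
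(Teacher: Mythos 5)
Your normalization is exactly the paper's: starting from the representative $(da)^N z$ supplied by Lemma~\ref{minreg0}, both you and the paper pass to the balanced split $\omega=(da)^n z(da)^m$ with $n\geq 1$, $m\in\{n,n-1\}$, and take $x=a(da)^{n-1}z(da)^m$. The problem is that everything after that point --- the verification that this $x$ actually satisfies the prefix/suffix condition, which is the entire content of the lemma --- is only a plan, and you say so yourself (``the residual cases \dots should be reduced'', ``I expect the main obstacle to be exactly this last reduction''). Those residual cases are not routine: for example, when $n=1$ and $u=az'$ is a proper prefix with $z'$ a prefix of $z$ ending in $a$, the comparison of $u$ with $u^*$ is not settled at position $2$; one must first use condition~(iv) to get $|z|_a\geq 2$ and hence $|z|_d=0$, then exhibit the factorization $\omega=(u)\,(v(da)^m d)$ and invoke alt-synchronization to force $(v(da)^md)^*\preceq_{alt} v(da)^md$, which is impossible since $|v|_d=0$. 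Each of the paper's roughly ten sub-cases (prefix ending in the leading block, inside $z$, at the end of $z$, in the trailing block, each split by $n\geq2$ vs.\ $n=1$ and $m\geq1$ vs.\ $m=0$) needs an argument of this kind, and none of them is actually carried out in your proposal.

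There is also a concrete false claim that your ``easy half'' rests on: ``$x$ begins and ends with the minimal letter $a$.'' When $N=1$ (so $n=1$, $m=0$) one has $x=az$, and $z$ need not end in $a$: over $\{a<b<c<d\}$ the cyclic word $dacb$ is alt-singular and gives $x=acb$. Your one-line disposal of all proper suffixes beginning with a letter $>a$ uses that $u^*$ starts with the last letter $a$ of $x$, so the whole $m=0$ branch of the suffix analysis is unsupported; the paper handles $m=0$ by separate ad hoc factorizations in each sub-case. Relatedly, ``suffixes are treated symmetrically'' is not literally true: the two blocks have different lengths, conditions~(ii) and~(iii) are not mirror images of one another, and the two boundary cases use opposite parities ($|a(da)^{n-1}|$ odd versus $|a(da)^{n-2}d|$ even), which matters for $\preceq_{alt}$. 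In short: correct choice of representative, but the proof of the inequalities is missing.
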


\begin{proof} By Lemma~\ref{minreg0},  short of  exchanging $\omega$ and $\omega^*,$ we may write $\omega= (da)^Nz$ ($N\geq 1)$ where  $z$ verifies conditions i) through iv) of Lemma~\ref{minreg0}.  Thus we may write $\omega = (da)^nz(da)^m$ with $n\geq 1$ and $m\in \{n,n-1\}.$
Set $x=a(da)^{n-1}z(da)^m.$ We now show that $x$ verifies the prefix/suffix condition stated in the lemma. 

Let $u$ be a proper prefix of $x.$ If $u$ is a prefix of $a(da)^{n-1}$ then clearly $u\preceq_{alt} u^*.$  Next suppose $2n-1<|u|<2n-1 + |z|.$ 
Suppose to the contrary that $u^*\prec_{alt} u.$ If $n\geq 2,$ then $u$ begins in $ad$ and ends in $da.$ This puts an occurrence of $da$ in $z,$ a contradiction. If $n=1,$ then since $u$ begins in $a$ it follows that $u$ ends in $a$ and hence  $|z|_a\geq 1.$ By condition iv) of Lemma~\ref{minreg0} it follows that $z$ ends in $a$ and since $u$ is a proper prefix of $az$ we deduce that $|z|_a\geq 2,$ whence $|z|_d=0.$ So we can write $\omega=duv(da)^m$ with $m=0,1$ and where $v$ is a non-empty proper suffix of $z.$  Since $u^*\prec_{alt} u,$ it follows that $(v(da)^md)^*\preceq_{alt} v(da)^md$ which is a contradiction since $|v|_d=0.$ 
Next suppose $u=a(da)^{n-1}z.$ If $n>1,$ since $da$ does not occur in $z$ we must have $u\preceq_{alt}u^*.$ If $n=1,$ then $u=az.$ To see that $u\preceq_{alt} u^*,$ suppose to the contrary that $u^*\prec_{alt}u.$ Then $z^*a\prec az$ which implies that $z=va$ for some $v\in \A^*$ and $u^*=av^*a\prec_{alt} ava=u$ or equivalently $v\prec_{alt}v^*$ which contradicts condition ii) of Lemma~\ref{minreg0}.  
Finally suppose that $|u|>2n-1+|z|.$ Then the only case which could potentially be a problem is if $m=n$ and $u=a(da)^{n-1}z(da)^{n-1}$ and $z$ ends in $a.$ In this case, by condition ii) of Lemma~\ref{minreg0} we may write $z=va$ with $v^*\preceq_{alt}v.$ Since $|a(da)^{n-1}|$ is odd it follows that $a(da)^{n-1}va(da)^{n-1}\preceq_{alt} a(da)^{n-1}v^*a(da)^{n-1},$ in other words $u\preceq_{alt} u^*.$

Now let $u$ be a proper suffix of $x.$ If $u$ is a suffix of $(da)^m$ then clearly $u^*\preceq_{alt} u.$  Next suppose $2m<|u|<2m+|z|.$ First, if  $|u|=2m+1$ then either $u=a(da)^m,$ in which case $u=u^*$ or $u^*\prec_{alt} u.$ So suppose $2m+1<|u|<2m+|z|$ and suppose to the contrary that $u\prec_{alt}u^*.$  
If $m\geq 1,$ then $u^*$ begins in $ad$ and hence so does $u.$ But this implies that $ad$ is a factor of $z$ which in particular, by condition iv), implies that $z$ ends in $a$ and hence $|z|_a\geq 2.$ Together with $|z|_d\geq 1,$ this contradicts condition iv). If $m=0$ then $n=1$ and we can write $\omega = davu$ where $v$ is a non-empty proper prefix of $z.$ As $u\prec_{alt}u^*,$ it follows that $dav\preceq_{alt} (dav)^*.$  If $|v|\geq 2,$ then $v$ ends in $ad$ which again contradicts condition iv). If $|v|=1,$ then $\omega = dadu$ and $z=du$ with  $u\prec_{alt}u^*.$ This contradicts condition  iii) of Lemma~\ref{minreg0}. Next suppose $u=z(da)^m.$ Since we already considered the case in which $|u|=2m+1$ we may assume that $|z|\geq 2.$ 
First suppose $m\geq 1;$  if $u\prec_{alt}u^*$ then $u$ begins in $ad$ which produces an occurrence of $ad$ in $z$ and hence contradicts condition iv) of  
Lemma~\ref{minreg0}. If $m=0$ then $n=1$ and $\omega =dau.$ Since $(da)^*\prec_{alt}da$ it follows that $u^*\preceq_{alt} u.$ Finally suppose that $|u|>2m+|z|.$ Then the only case which could potentially be a problem is if $m=n-1$ and $u=a(da)^{n-2}z(da)^{n-1}$ and $z$ begins in $d.$ Writing $z=dv$ and applying condition iii) and using the fact that $|a(da)^{n-2}d|$ is even gives $v^*\preceq_{alt}v \Longrightarrow a(da)^{n-2}dv^*(da)^{n-1} \preceq_{alt} a(da)^{n-2}dv(da)^{n-1}  \Longrightarrow u^*\preceq_{alt}u$ as required. This completes the proof of the lemma.  \end{proof}

\begin{lemma}\label{minreg2} Let $\omega  \in \Singalt(\A). $ Write $\omega=dx$ where $x$ satisfies the prefix/suffix condition of Lemma~\ref{minreg1}.
Then for every factorization $x=u^*vw$ with $v\neq v^*$ and $u\neq w$ one has $v\prec_{alt}v^*$ if and only if $w \prec_{alt} u.$
\end{lemma}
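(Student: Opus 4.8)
The plan is to split the argument into three cases according to how the linear words $u$ and $w$ relate as prefixes of one another, invoking the alt-synchronizing property of $\omega$ in one case and the prefix/suffix condition of Lemma~\ref{minreg1} in the other two. Since $u\ne w$, exactly one of the following holds: (I) neither of $u,w$ is a prefix of the other; (II) $u$ is a proper prefix of $w$; (III) $w$ is a proper prefix of $u$. (The degenerate cases $u=\varepsilon$ and $w=\varepsilon$ fall under (II) and (III) respectively.) In each case I will show that both sides of the asserted equivalence, ``$v\prec_{alt}v^*$'' and ``$w\prec_{alt}u$'', are equivalent to one and the same condition.

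In case (I), let $j\le\min(|u|,|w|)$ be the first position at which $u$ and $w$ differ. Rotating $\omega=du^*vw$ to the representative $vwdu^*$ and factoring it as $\omega=v\cdot(wdu^*)$, both factors are non-palindromes: $v$ by hypothesis, and $wdu^*$ because equality with its reversal $udw^*$ would force one of $u,w$ to be a prefix of the other. Since $\omega\in\Singalt(\A)$ this factorization is alt-synchronizing, so $v\prec_{alt}v^*$ if and only if $wdu^*\prec_{alt}udw^*$. Finally $wdu^*$ and $udw^*$ agree on their first $j-1$ letters and differ at position $j$ with exactly the pair of letters at which $w$ and $u$ differ, hence $wdu^*\prec_{alt}udw^*$ if and only if $w\prec_{alt}u$. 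Chaining the two equivalences gives the claim.

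In case (II), write $w=uw_0$ with $w_0\ne\varepsilon$. Then $u^*vu$ is a proper prefix of $x=u^*vuw_0$, so by the prefix condition $u^*vu\preceq_{alt}(u^*vu)^*=u^*v^*u$, strictly since $v\ne v^*$. Cancelling the common prefix $u^*$ (of length $|u|$), the decisive discrepancy occurs inside the $v$-versus-$v^*$ block but at a position shifted by $|u|$; a parity check therefore shows that $u^*vu\prec_{alt}u^*v^*u$, hence $v\prec_{alt}v^*$, holds if and only if $|u|$ is even. On the other hand, since $u$ is a proper prefix of $w$, the stated convention for $\prec_{alt}$ on prefixes says $w\prec_{alt}u$ if and only if $|u|$ is even. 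Both conditions thus reduce to the parity of $|u|$. Case (III) is entirely analogous: now $w^*vw$ is a proper suffix of $x$, and applying the suffix condition of Lemma~\ref{minreg1} to it (together with the prefix convention, which here gives $w\prec_{alt}u$ iff $|w|$ is odd) shows that both ``$v\prec_{alt}v^*$'' and ``$w\prec_{alt}u$'' are equivalent to $|w|$ being odd. Alternatively, case (III) reduces to case (II) on passing from $\omega$ to $\omega^*=dx^*$: the word $x^*$ again satisfies the prefix/suffix condition of Lemma~\ref{minreg1}, and the factorization $x^*=w^*v^*u$ has the form $\tilde u^*\tilde v\tilde w$ with $(\tilde u,\tilde v,\tilde w)=(w,v^*,u)$, so the equivalence for $x^*$ is exactly the one we want for $x$.

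The main obstacle is not conceptual but bookkeeping in the alternating order: cancelling a common prefix of odd length reverses the relevant inequality, so in cases (II) and (III) one must track the parity of $|u|$, resp. $|w|$, with care, and one must also check in each case that the auxiliary words handed to the alt-synchronizing, prefix, and suffix conditions (namely $wdu^*$, $u^*vu$, $w^*vw$) are genuinely non-palindromic, resp. proper. The one genuinely clever step is the choice of factorization $\omega=v\cdot(wdu^*)$ in case (I), together with the observation that $wdu^*$ is a palindrome precisely when one of $u,w$ is a prefix of the other — precisely the configuration excluded there and handled instead by the prefix/suffix condition.
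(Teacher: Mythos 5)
Your proof is correct and takes essentially the same route as the paper's: both use the alt-synchronizing factorization $\omega=v\cdot(wdu^*)$ when neither of $u,w$ is a prefix of the other, and fall back on the prefix/suffix condition of Lemma~\ref{minreg1} applied to $u^*vu$ and $w^*vw$ (with the same parity bookkeeping) in the prefix cases. The only difference is organizational — you case-split on the prefix relation between $u$ and $w$ and prove the full equivalence in each case, whereas the paper argues the two implications separately — but the ingredients are identical.
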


\begin{proof} Fix a factorization $x=u^*vw$ with $v\neq v^*$ and $u\neq w.$ Assume $v\prec_{alt}v^*.$ Then since  $\omega  \in \Singalt(\A)$ we have that
$wdu^* \preceq_{alt} udw^*.$ Thus if neither $u$ nor $w$ is a proper prefix of the other, then $w\prec_{alt}u$ as required. This inequality is also verified in case $w$ is a proper prefix of $u$ of odd length or if $u$ is a proper prefix of $w$ of even length. Thus suppose $w$ is a proper prefix of $u$ of even length.
Then as $w^*vw$ is a proper suffix of $x,$ we have that $w^*v^*w\preceq_{alt} w^*vw$ which in turn implies $v^*\preceq_{alt}v$ contrary to our assumption. Similarly if $u$ is a proper prefix of $w$ of odd length, then as $u^*vu$ is a proper prefix of $x,$ we deduce that $u^*vu\preceq_{alt}u^*v^*u$ and hence $v^*u\preceq_{alt} vu$ or equivalently $v^*\preceq_{alt}v,$ again a contradiction to our assumption that $v\prec_{alt}v^*.$

Now assume that $v^*\prec_{alt} v.$ We will show that $u\prec_{alt}w.$ Since  $\omega  \in \Singalt(\A)$ we have that
$   udw^* \preceq_{alt} wdu^*.$ Thus if neither $u$ nor $w$ is a proper prefix of the other, then $u\prec_{alt}w$ as required. This inequality is also true in case $u$ is a proper prefix of $w$ of odd length or if $w$ is a proper prefix of $u$ of even length. So suppose $u$ is a proper prefix of $w$ of even length. Then $u^*vu$ is a proper prefix of $x$ and hence $u^*vw\preceq_{alt}u^*v^*u$ which implies that $v\preceq v^*$ contrary to our assumption. Similarly, if $w$ is a proper prefix of $u$ of odd length then $w^*v^*w\preceq_{alt}w^*vw$ which implies that $v\preceq_{alt} v^*,$ a contradiction. \end{proof}

\begin{proposition}For each symmetric cyclic Abelian class $\X$ over the ordered alphabet $\A$ we have $|\Singalt (\X)|=1.$
In particular, if $\A\subseteq \{2,3,4,\ldots,\}$, then  each cyclic Abelian class $\C$ over $\A$ contains a unique (up to reversal) cyclic word $\omega$ with the property that $\Kc(\omega)=\min \{\Kc(\nu) : \nu \in \C\}.$
\end{proposition}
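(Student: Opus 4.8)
The plan is to derive this from the structural lemmas already in hand. \emph{Existence} is the easy half: the orders $\prec$ and $\prec_{alt}$ on $\A^+$ depend only on the order type of $\A$, so the set $\Singalt(\A)$, and with it $\Singalt(\X)$, depends only on that order type; thus we may assume $\A\subseteq\{2,3,4,\ldots\}$. For such $\A$ the finite class $\C$ has a $\Kc$-minimizer, and by Proposition~\ref{maxmin}(1) it lies in $\Singalt(\C)$; hence $\Singalt(\X)\neq\emptyset$. It remains to prove $|\Singalt(\X)|\le 1$, and by the order-type remark it then also follows that the minimizing cyclic word is, up to word isomorphism, independent of the actual values in $\A$.

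For \emph{uniqueness}, fix a symmetric cyclic Abelian class $\X$ and let $\omega\in\Singalt(\A)$ realize its Parikh vector; put $a=\min(\omega)$ and $d=\max(\omega)$. The idea is to show that a canonical representative supplied by Lemmas~\ref{minreg0} and~\ref{minreg1} is forced by the Parikh vector alone. By Lemma~\ref{minreg1}, after possibly replacing $\omega$ by $\omega^*$, we have $\omega=dx$ with $x$ satisfying the prefix/suffix condition, and by Lemma~\ref{minreg0} we may moreover take $\omega=(da)^Nz$ with $N\ge1$ and $z$ subject to conditions i)--iv). So it is enough to prove that the number $N$ and the word $z$ are determined by the Parikh vector, up to a single global choice between $\omega$ and $\omega^*$. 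I would do this by induction on $\card\A$, with the length $|\omega|$ as a secondary parameter; the base case $\card\A=1$ is vacuous.

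For the inductive step, the structural lemmas confine the letter $a$: by Lemma~\ref{minreg00} and Lemma~\ref{minreg0}, every occurrence of $a$ lies in a rigid local pattern relative to $d$ (either always as $ad$ or always as $da$, and outside the block $(da)^N$ at most one further occurrence, forced to an extreme position of $z$). I would then \emph{contract} $\omega$ --- deleting all occurrences of $a$, equivalently merging each $a$ into its forced neighbouring $d$ --- to obtain a cyclic word $\omega''$ over $\A\setminus\{a\}$ whose Parikh vector is that of $\omega$ with the $a$-entry removed. The crucial claim is that $\omega''\in\Singalt(\A\setminus\{a\})$; this should follow from Lemma~\ref{minreg2} applied to the wrapped word $dx$, by checking that the controlled positions of the deleted $a$'s leave the alternating inequalities $u\preceq_{alt}u^*$ governing factorizations unharmed. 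Granting this, the induction hypothesis gives that $\omega''$ is the unique element of $\Singalt(\A\setminus\{a\})$ with its Parikh vector, up to reversal; and then conditions i)--iv) of Lemma~\ref{minreg0}, the prefix/suffix condition of Lemma~\ref{minreg1}, and Lemma~\ref{minreg00} pin down the exponent $N$ and the unique placement of the remaining $a$'s back inside $\omega''$. Hence $\omega$ is recovered uniquely up to reversal, that is, $|\Singalt(\X)|=1$. The $\Kc$-statement then follows at once: by Proposition~\ref{maxmin}(1) every $\Kc$-minimizer of $\C$ lies in $\Singalt(\C)$, which we have shown to be a single cyclic word up to reversal, and a minimizer exists because $\C$ is finite.

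I expect the \emph{main obstacle} to be precisely the implication $\omega\in\Singalt(\A)\Rightarrow\omega''\in\Singalt(\A\setminus\{a\})$ together with the uniqueness of the reverse passage. Because $\prec_{alt}$ is parity-sensitive, deleting letters can flip the parity of a prefix or suffix, so the inequalities controlling factorizations of $\omega''$ are not literally inherited from those for $\omega$; one has to exploit that the $a$'s occur only in the rigid positions described above to show the parity shifts are harmless. A possibly cleaner alternative, which avoids the alphabet induction altogether, would be to show directly --- using the prefix/suffix condition of Lemma~\ref{minreg1} and Lemma~\ref{minreg2} --- that $x$ is built deterministically letter by letter from the Parikh vector by a greedy rule in the $\prec_{alt}$ sense; there the obstacle shifts to proving the rule never gets stuck and always yields a word meeting all the stated conditions.
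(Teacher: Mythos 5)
Your existence half is fine and matches the paper's (the order-type reduction plus Proposition~\ref{maxmin}(1)). The uniqueness half, however, has a genuine gap, and it is exactly the one you flag yourself: the implication $\omega\in\Singalt(\A)\Rightarrow\omega''\in\Singalt(\A\setminus\{a\})$ for the contraction obtained by deleting all occurrences of $a=\min(\omega)$ is asserted but not proved, and it is not a routine consequence of Lemmas~\ref{minreg00}--\ref{minreg2}. Because $\prec_{alt}$ reverses at every position, deleting letters shifts parities and can invert the comparison $u\prec_{alt}u^*$ for the images of factors; controlling this requires a full case analysis of how the deleted $a$'s sit relative to an arbitrary factorization of $\omega''$ (the paper carries out an analysis of precisely this kind, but only for the non-alternating order and only for the \emph{insertion} maps $\xi_b$ in Lemma~\ref{t:xising}, under the extra hypothesis $\delta_b\neq0$ --- and even there it occupies several pages). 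The reverse passage is also left open: you would need to show that, given $\omega''$ and the multiplicity $n_a$, at most one re-insertion of the $a$'s can produce an alt-singular word, which again is a nontrivial claim about $\prec_{alt}$ rather than something that ``conditions i)--iv) pin down.'' As written, the proposal is a plan whose central step is missing.

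For comparison, the paper closes uniqueness without any alphabet induction: Lemmas~\ref{minreg0}--\ref{minreg2} produce from any $\bar\omega\in\Singalt(\X)$ a distinguished \emph{linear} representative $dx$ such that every factorization $x=u^*vw$ with $v\neq v^*$, $u\neq w$ satisfies $v\prec_{alt}v^*\Longleftrightarrow w\prec_{alt}u$, and then a second element of $\Singalt(\X)$ would yield a second linear word in the same Abelian class with this property, contradicting Theorem~2 of Ramharter~\cite{Ram}. So the cyclic uniqueness is deliberately reduced to a known linear uniqueness theorem; that reduction is the content of Lemma~\ref{minreg2}, which your argument invokes only in passing. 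If you want to salvage your contraction approach, you would have to prove an alternating analogue of Lemma~\ref{t:xising} (preservation of alt-singularity under deletion/insertion of the extremal letter) together with uniqueness of the re-insertion; otherwise the cleanest repair is to follow the paper and quote Ramharter's linear result.
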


\begin{proof} By Proposition~\ref{maxmin} we have that  $|\Singalt (\X)|\geq 1.$  To see that $|\Singalt (\X)|\leq 1,$ assume that $\bar \omega\in \Singalt(\X).$ By lemmas~\ref{minreg0}, \ref{minreg1} and \ref{minreg2} we have that $\omega$ or $\omega^*$ admits a linear representation of the form $dx$ with $d=\max(\omega)$ and for all factorizations $x=u^*vw$ ($u,v,w \in \A^*)$ with $v\neq v^*$ and $u\neq w$ we have $v\prec_{alt} v^*$ if and only if $w\prec _{alt}u.$
Thus if $ \Singalt(\X)$ contains another cyclic class $\bar \nu \neq \bar \omega,$  then the Abelian class of $x$ would contain another linear word $y$ with the property that   for all factorizations $y=u^*vw$ with $v\neq v^*$ and $u\neq w$ we have $v\prec_{alt} v^*$ if and only if $w\prec_{alt} u.$  But this contradicts Theorem~2 in \cite{Ram}. The last statement of the proposition follows immediately from Proposition~\ref{maxmin}.\end{proof}

\begin{lemma}\label{maxreg} Let $\omega \in \Ualt(\A)$ and let $j=\max(\omega).$ Then either $\omega$ or $\omega^*$ admits a linear representation of the form $y=jxj^{n-1}$ with $n\geq 1,$ $|x|_j=0$ and $x\preceq_{alt} x^*.$ Moreover, for all factorizations $y=u^*vw$ ($u,v,w\in \A^*)$ with $v\neq v^*$ and $u\neq w$ we have $v\prec _{alt}v^*$ if and only if $u\prec_{alt}w.$ 
\end{lemma}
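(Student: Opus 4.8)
The plan is to adapt, in the much simpler setting of $\Ualt$, the chain of arguments used for $\Singalt$ in Lemmas~\ref{minreg00}--\ref{minreg2}. The first step is to show that in $\omega\in\Ualt(\A)$ all occurrences of $j=\max(\omega)$ are cyclically consecutive. If they are not, then $\omega$ has a linear representative of the form $j^{p}\alpha\,j^{q}\beta$ with $p,q\geq 1$, with $\alpha$ non-empty and free of $j$, and with $\beta$ non-empty and ending in a letter $<j$; then $\omega=(j^{p}\alpha)(j^{q}\beta)$ is a factorization into two non-palindromes, each beginning with $j$ and ending with a letter $<j$, so that $j^{p}\alpha\succ_{alt}(j^{p}\alpha)^{*}$ and $j^{q}\beta\succ_{alt}(j^{q}\beta)^{*}$; hence this factorization is alt-synchronizing, contradicting $\omega\in\Ualt(\A)$. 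Therefore $\omega$ — and also $\omega^{*}$, since $\Ualt(\A)$ is closed under reversal — has a representative $j^{n}x$ with $n=|\omega|_{j}\geq 1$ and $|x|_{j}=0$; after possibly replacing $\omega$ by $\omega^{*}$ we may assume $x\preceq_{alt}x^{*}$, and we set $y=jxj^{n-1}$.

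Next I would record the analogue of Lemma~\ref{minreg1}: writing $x=pq$ and applying $\Ualt$ to the factorizations $\omega=(p)(qj^{n})$ and $\omega=(j^{n}p)(q)$ — using in each case that the comparison of one of the two parts with its reversal is settled already at the first letter, $j$ being strictly largest — one obtains that every proper prefix $p$ of $x$ satisfies $p^{*}\preceq_{alt}p$ and every proper suffix $q$ of $x$ satisfies $q\preceq_{alt}q^{*}$ (the palindromic cases being trivial).

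For the ``moreover'' assertion, fix a factorization $y=u^{*}vw$ with $v\neq v^{*}$ and $u\neq w$, and first suppose $wu^{*}$ is not a palindrome. Applying $\Ualt$ to the cyclic factorization $\omega=(v)(wu^{*})$ yields $v\prec_{alt}v^{*}$ if and only if $uw^{*}\prec_{alt}wu^{*}$, so it remains to prove $uw^{*}\prec_{alt}wu^{*}\iff u\prec_{alt}w$. If neither of $u,w$ is a prefix of the other, this is immediate since $uw^{*}$ and $wu^{*}$ first differ exactly where $u$ and $w$ do. If $u$ is a proper prefix of $w=ut$ (the case $w$ a proper prefix of $u$ being symmetric), then $uw^{*}$ and $wu^{*}$ differ precisely in the comparison of $t^{*}$ against $t$ shifted by $|u|$ letters, with $t\neq t^{*}$; since $t$ is a non-palindromic suffix of $y$, one gets $t\prec_{alt}t^{*}$ from $j=\max$ when $n\geq 2$ and from the suffix inequality of the previous step when $n=1$ (where $t$ must be a proper suffix of $x$). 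A short computation separating the parities of $|u|$ and invoking the prefix conventions for $\prec_{alt}$ then gives $uw^{*}\prec_{alt}wu^{*}\iff u\prec_{alt}w$; in the symmetric case one writes $u=wr$ and uses that $r^{*}$ coincides with a proper prefix of $y$ — after excluding, via $v\neq v^{*}$, the possibility that $r^{*}$ contains all of $jx$ — so that $r\prec_{alt}r^{*}$ again by $j=\max$.

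The case where $wu^{*}$ is a palindrome — in which $\Ualt$ cannot be invoked on $(v)(wu^{*})$ — is the main obstacle. Here $u=w$ is excluded, so one of $u,w$ is a proper prefix of the other with palindromic tail; since $|x|_{j}=0$, such a tail is forced to be a power of $j$ when $n\geq 2$ and a palindromic suffix of $x$ when $n=1$, which pins down the shapes of $u$ and $w$ very tightly. Running through these shapes, $v$ turns out to be, up to flanking blocks of $j$, either a power of $j$ — impossible since $v\neq v^{*}$ — or essentially $x$ itself, so that the comparison of $v$ with $v^{*}$ reduces, through the parities of the flanking $j$-blocks, to the comparison of $x$ with $x^{*}$, whereupon the normalization $x\preceq_{alt}x^{*}$ closes the argument. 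I expect this final case, together with the parity bookkeeping in the preceding paragraph, to absorb essentially all of the effort; everything else follows directly from $j$ being the largest letter.
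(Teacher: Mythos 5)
Your first two steps (clustering of the $j$'s, the normalization $y=jxj^{n-1}$ with $x\preceq_{alt}x^*$, and the prefix/suffix inequalities for proper prefixes and suffixes of $x$) are correct and agree with the paper's opening, and your treatment of the factorizations $y=u^*vw$ for which $wu^*$ is not a palindrome is sound: the equivalence $uw^*\prec_{alt}wu^*\iff u\prec_{alt}w$, the identification of the tail $t$ (resp.\ $r$) as a non-palindromic suffix (resp.\ prefix) of $y$, and the parity bookkeeping against the prefix conventions all check out.

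The gap is in the palindromic case, which you rightly single out as the crux. The claim that there $v$ is ``up to flanking blocks of $j$, either a power of $j$ or essentially $x$ itself'' is false. When $u$ is a non-empty proper prefix of $w$, the requirement that $u^*$ be a prefix of $y=jxj^{n-1}$ forces $u=zj$ and then $w=zj^{n-1}$ for some $j$-free word $z$, whence $y=jz^*vzj^{n-1}$, i.e.\ $x=z^*vz$: for $z\neq\varepsilon$ the word $v$ is a proper internal factor of $x$, not $x$ flanked by $j$-blocks. Concretely, take $x=abaa$ over $\{a<b<j\}$ and $n\geq 3$, with $u^*=ja$, $v=ba$, $w=aj^{n-1}$; then $wu^*=aj^na$ is a palindrome, yet $v=ba$ is not of the form $j^\alpha x j^\beta$. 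The analogous configuration for $n=1$ is $x=w^*vw$. So the list of shapes you propose to ``run through'' omits precisely the decisive one, and the mechanism you name (``parities of the flanking $j$-blocks'') does not apply to it. The repair is exactly the paper's argument, which handles this configuration head-on: from $x=z^*vz$ and $x\prec_{alt}x^*=z^*v^*z$ (strict because $v\neq v^*$) one reads off that $v\prec_{alt}v^*$ holds iff $|z|$ is even, i.e.\ iff $|u|=|z|+1$ is odd, i.e.\ iff $u\prec_{alt}w$; the reduction to the comparison of $x$ with $x^*$ goes through the parity of the conjugating word $z$ rather than through $j$-blocks. With that correction (and its $n=1$ counterpart via $x=w^*vw$) your outline closes, and is otherwise a faithful, if differently organized, version of the paper's proof.
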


\begin{proof} We begin by showing that $\omega$ admits a linear representation of the form $j^nx$ $(n\geq 1)$ with $|x|_j=0.$ In other words, all occurrences of $j$ in $\omega$ are clustered together. In fact, if this were not the case, then we could write $\omega=jxajyb$ with $a,b\in \A\setminus \{j\}$ and $x,y\in \A^*.$ But then the factorization $\omega=uv$ with $u=jxa$ and $v=jyb$ is alt-synchronizing since $u^*\prec_{alt} u$ and $v^*\prec_{alt} v.$ Having established the claim, short of replacing $\omega$ by $\omega^*,$ we may assume without loss of generality that $\omega=j^nx$ with $x \preceq x^*.$ Put $y=jxj^{n-1}.$  Then $y$ is a linear representation of $\omega.$ 

Now assume $y=u^*vw$ with $v\neq v^*$ and $u\neq w. $ First suppose $v\prec_{alt} v^*.$ We will show that $u\prec_{alt}w.$ Since $\omega \in \Ualt(\A)$ we have that $uw\preceq_{alt} wu^*.$ Thus if $u$ and $w$ are not proper prefixes of one another, then $u\prec_{alt}w.$ So suppose $u$ is a proper prefix of $w.$ We note that $u\neq \varepsilon$ otherwise $v$ would be a non-palindromic prefix of $y$ which would imply that $v^*\prec_{alt}v$ contrary to our assumption that $v\prec_{alt} v^*.$ Thus we can write $u=zj$ for some $z\in \A^*.$ It follows that $w=zj^{n-1}$ with $n>2$ and $x=z^*vz.$
Since $v\prec_{alt} v^*$ and $x\preceq_{alt} x^*$ it follows that $|z|$ is even whence $u\prec_{alt}w.$ Next suppose that $w$ is a proper prefix of $u.$ If $w=\varepsilon$ then $u\prec_{alt}w.$ So suppose $w\neq \varepsilon.$ Then $n=1$ and $w$ is a suffix of $x.$ Let us write $u=wu'j$ with $u'\in \A^*.$ 
If $u'\neq \varepsilon,$ then we can write $y=(ju')(w^*vw)$ and $(ju')^*\prec_{alt}ju'.$ It follows that $w^*vw \preceq_{alt} w^*v^*w$ and hence that $|w|$ is even. Thus, $u\prec_{alt} w.$ Finally, if $u'=\varepsilon,$ then $u=wj$ and $x=w^*vw.$ Since $v\prec_{alt}v^*$ and $x\preceq_{alt} x^*$ it follows that $|w|$ is even and hence $u\prec_{alt}w$ as required.

Next suppose $v^*\prec_{alt}v.$ We will show that $w\prec_{alt}u.$ Since $\omega \in \Ualt(\A),$ we have that $wu^*\preceq_{alt}uw^*.$ Thus is neither $u$ nor $w$ is a proper prefix of the other, then $w\prec_{alt}u.$ So suppose $u$ is a proper prefix of $w.$ If $u=\varepsilon,$ then $w\prec_{alt}u$ as required. If $u\neq \varepsilon,$ then $u=zj$ (with $z\in \A^*),$  $w=zj^{n-1}$ with $n>2$ and $x=z^*vz.$ Since $x\preceq_{alt}x^*$ and $v^*\prec_{alt}v$ it follows that $|z|$ is odd, whence $|u|$ is even and so $w\prec_{alt} u.$ So assume now that $w$ is a proper prefix of $u.$ We note that $w\neq \varepsilon$ otherwise $v$ is a non-palindromic suffix of $y$ so we can write $y=u^*v$ with $u\prec_{alt}u^*.$ But then $v\preceq_{alt}v^*$ contrary to our assumption that  $v^*\prec_{alt}v.$ So, $n=1,$ $u=wj$ and $x=w^*vw.$ Since $x\preceq_{alt}x^*$ and $v^*\prec_{alt}v,$ it follows that $|w|$ is odd and hence $w\prec_{alt}u$ as required.  \end{proof}

\begin{proposition} For each symmetric cyclic Abelian class $\X$ over the ordered alphabet $\A$ we have $|\Ualt (\X)|=1.$
In particular, if $\A\subseteq \{2,3,4,\ldots,\}$, then  each cyclic Abelian class $\C$ over $\A$ contains a unique (up to reversal) cyclic word $\omega$ with the property that $\Kc(\omega)=\max \{\Kc(\nu) : \nu \in \C\}.$
\end{proposition}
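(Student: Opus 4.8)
The plan is to prove this exactly as the preceding proposition ($|\Singalt(\X)|=1$) was proved, with Lemma~\ref{maxreg} taking over the role played there by Lemmas~\ref{minreg0}, \ref{minreg1} and \ref{minreg2}.

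The lower bound $|\Ualt(\X)|\ge 1$ was already recorded after Proposition~\ref{maxmin}: membership of a cyclic word in $\Ualt$ depends only on the order type of the underlying alphabet, so one realizes $\A$ as an order-isomorphic subset of $\{2,3,4,\ldots\}$, takes $\omega$ in the matching cyclic Abelian class with $\Kc(\omega)$ maximal, and applies part~2 of Proposition~\ref{maxmin} to obtain $\omega\in\Ualt$; hence $\Ualt(\X)\neq\emptyset$.

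For the upper bound $|\Ualt(\X)|\le 1$, suppose $\bar\omega,\bar\nu\in\Ualt(\X)$. Replacing $\omega$ and $\nu$ by their reversals if necessary, Lemma~\ref{maxreg} supplies linear representatives $y=jxj^{n-1}$ of $\omega$ and $y'=jx'j^{n-1}$ of $\nu$, where $j=\max(\omega)=\max(\nu)$ and $n$ are prescribed by the Parikh vector, $|x|_j=|x'|_j=0$, $x\preceq_{alt}x^*$, $x'\preceq_{alt}(x')^*$, and where every factorization $y=u^*vw$ (resp.\ $y'=u^*vw$) with $v\neq v^*$ and $u\neq w$ satisfies $v\prec_{alt}v^*$ if and only if $u\prec_{alt}w$. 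Now $y$ and $y'$ lie in one and the same Abelian class --- the one determined by $\X$ --- and both have this factorization property, so Theorem~2 of \cite{Ram}, whose content is that such a word is unique up to reversal within its Abelian class, forces $\{y,y^*\}=\{y',(y')^*\}$; reading this back in $\Ac$ gives $\{\omega,\omega^*\}=\{\nu,\nu^*\}$, i.e.\ $\bar\omega=\bar\nu$. The ``in particular'' clause then follows by combining part~2 of Proposition~\ref{maxmin} with this uniqueness.

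The one delicate point is the appeal to \cite{Ram}. Theorem~2 there is stated for the usual lexicographic order $\prec$ and for the companion condition ``$v\prec v^*$ iff $w\prec u$'' that surfaced in the $\Singalt$ analysis, whereas here I need the alternating order $\prec_{alt}$ and the condition ``$v\prec_{alt} v^*$ iff $u\prec_{alt} w$''. I expect the main (minor) obstacle to be checking that Ramharter's argument is insensitive to these two modifications: passing from $\prec$ to $\prec_{alt}$ changes nothing structurally, while interchanging the roles of $u$ and $w$ in the condition is precisely the effect of reversing the order on $\A$, so the two forms of the statement are equivalent. I would also verify explicitly that the factorization condition is invariant under $y\mapsto y^*$ --- this is what makes it legitimate to read Ramharter's uniqueness as ``up to reversal'' rather than as absolute, the latter being impossible whenever $y\neq y^*$. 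Beyond this, the argument is a line-for-line transcription of the $\Singalt$ case.
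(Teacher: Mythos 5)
Your argument is correct and follows the paper's proof essentially line for line: the lower bound from Proposition~\ref{maxmin}, the reduction via Lemma~\ref{maxreg} to a linear word satisfying the factorization condition ``$v\prec_{alt}v^*$ iff $u\prec_{alt}w$'', and the appeal to Theorem~2 of \cite{Ram} for uniqueness within the Abelian class. Your closing caveat about whether Ramharter's Theorem~2 covers the alternating order and this orientation of the condition is a fair point of diligence, but the paper invokes that theorem in exactly the same way (indeed for all three variants of the condition) without further comment.
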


\begin{proof} That $|\Ualt (\X)|\geq 1$ follows from Proposition~\ref{maxmin}. In order to show that $|\Ualt (\X)|\leq 1,$ assume $\bar \omega \in \Ualt (\X).$ Then, by Lemma~\ref{maxreg}, the pair $\{\omega,\omega^*\}$ determines a linear word $y$ which is a linear representation of $\omega$ or of $\omega^*,$ and with the property that for all factorizations $y=u^*vw$ ($u,v,w \in \A^*)$ with $v\neq v^*$ and $u\neq w$ we have $v\prec_{alt} v^*$ if and only if $u\prec_{alt} w.$ So if $\Ualt( \X)$ contains another cyclic class $\bar \nu \neq \bar \omega,$  then the Abelian class of $y$ would contain another linear word $z$ with the property that  
for all factorizations $z=u^*vw$ with $v\neq v^*$ and $u\neq w$ we have $v\prec_{alt} v^*$ if and only if $u\prec_{alt} w.$  But this contradicts Theorem~2 in \cite{Ram}. The last statement of the proposition follows immediately from Proposition~\ref{maxmin}. \end{proof}

\begin{lemma}\label{minsemireg} Let $\omega \in \U(\A)$ and let $j=\max(\omega).$ Then either $\omega$ or $\omega^*$ admits a linear representation of the form $y=j^nxj^m$ with $n\geq 1$ and $m\in \{n,n-1\},$ such that $|x|_j=0$ and $x\preceq x^*.$ Moreover, for all factorizations $y=u^*vw$ ($u,v,w\in \A^*)$ with $v\neq v^*$ and $u\neq w$ we have $v\prec v^*$ if and only if $u\prec w.$ 
\end{lemma}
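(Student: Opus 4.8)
The plan is to mimic almost verbatim the argument given for Lemma~\ref{maxreg}, replacing the alternating order $\prec_{alt}$ by the usual order $\prec$ throughout, and keeping track of the one structural difference between $\Ualt(\A)$ and $\U(\A)$: a factorization $\omega=uv$ is $\prec$-synchronizing exactly when $u\prec u^*\iff v\prec v^*$, so membership in $\U(\A)$ means that every such factorization satisfies $u\prec u^*\iff v^*\prec v$. First I would show that all occurrences of $j=\max(\omega)$ cluster together, i.e.\ $\omega$ (or $\omega^*$) has a linear representation $j^n x$ with $|x|_j=0$ and $n\geq 1$: if not, write $\omega=jxa\,jyb$ with $a,b\in\A\setminus\{j\}$; then for $u=jxa$ and $v=jyb$ one has $u^*\prec u$ \emph{and} $v^*\prec v$ (because each of $u,v$ begins with $j$, the largest letter, and does not end in $j$), so $\omega=uv$ is synchronizing, contradicting $\omega\in\U(\A)$. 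After replacing $\omega$ by $\omega^*$ if necessary we may take $x\preceq x^*$, and we set $y=j^n x j^m$; note that for the block $j^n x$ the two natural presentations $j^n x$ and $x j^n$ are cyclic conjugates, and choosing $m\in\{n,n-1\}$ simply records whether we split the $j$-block symmetrically or not — either splitting gives a valid linear representation of $\omega$, and in the argument below only the values of $n$ and $m$ and the inequalities on $x$ matter.

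Next, for the second assertion, fix a factorization $y=u^*vw$ with $v\neq v^*$, $u\neq w$. The case $v\prec v^*$: since $\omega\in\U(\A)$ the factorization of $\omega$ given by $u^*v$ and $w$ (i.e.\ $\omega = (u^*v)\,w$ up to conjugacy, equivalently the cyclic word $y$ factors as $v w\,\cdot\, $ reading appropriately) is non-synchronizing, which after unwinding the $j^n$-prefix yields the inequality $uw\preceq w u^*$ in the usual order; hence if neither of $u,w$ is a prefix of the other we immediately get $u\prec w$. The remaining possibilities — $u$ a proper prefix of $w$, or $w$ a proper prefix of $u$ — are handled exactly as in Lemma~\ref{maxreg}: one writes $u=zj$ (resp.\ peels off the trailing $j$), deduces $w=zj^{n-1}$ with $n>2$ and $x=z^*vz$ (resp.\ $n=1$ and $x=w^*vw$ or $x=w^*v\cdot(\text{something})$), and then the parity of $|z|$ (resp.\ $|w|$) is forced by combining $v\prec v^*$ with $x\preceq x^*$, giving $u\prec w$. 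The degenerate subcases $u=\varepsilon$ or $w=\varepsilon$ are ruled out (or trivially settled) exactly as before: if $u=\varepsilon$ then $v$ is a non-palindromic prefix of $y$, forcing $v^*\prec v$ against the hypothesis. The case $v^*\prec v$ is symmetric and gives $w\prec u$.

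The only genuine point requiring care — and the place where the $\prec$ versus $\prec_{alt}$ distinction actually bites — is the bookkeeping of \emph{parities}. In Lemma~\ref{maxreg} the alternating order makes the length of a peeled-off prefix enter the inequalities, so the conclusions there are stated with ``$|z|$ even'' etc.; in the present lemma, with the plain lexicographic order, length plays no role, and the parity conditions disappear, so that the implications $v\prec v^*\Rightarrow u\prec w$ and $v^*\prec v\Rightarrow w\prec u$ come out cleanly with no case split on $|z|$. Concretely, from $x=z^*vz$ with $x\preceq x^*=z^*v^*z$ one reads off $v\preceq v^*$ directly (cancel the common prefix $z^*$ and suffix $z$), which already contradicts $v\prec v^*$ or confirms $v^*\prec v$ as needed; similarly $x=w^*vw\preceq w^*v^*w$ gives $v\preceq v^*$. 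Thus the main obstacle is not a new idea but simply verifying that every sub-case of the Lemma~\ref{maxreg} proof survives this substitution — in particular that wherever that proof invoked ``$x\preceq_{alt}x^*$ and the oddness of $|j x^{(\cdot)}|$'' to flip an inequality, the plain-order version needs only ``$x\preceq x^*$'' with no parity input. I would write this out carefully for the subcase where $u$ (or $w$) is a proper prefix of the other and $n=1$, since that is where the peeled word is a suffix of $x$ rather than of the periodic block, and then the remaining subcases are immediate.
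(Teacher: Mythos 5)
Your first paragraph reproduces the paper's opening step correctly (clustering the occurrences of $j$ via a synchronizing factorization $jxa\cdot jyb$, then normalizing to $x\preceq x^*$), and the overall skeleton of the second part -- derive $uw^*\preceq wu^*$ or $wu^*\preceq uw^*$ from membership in $\U(\A)$, then split on whether $u$ and $w$ are prefixes of one another -- is also the paper's. The gap is in the prefix subcases. Recall the paper's convention that $p\prec q$ whenever $q$ is a proper prefix of $p$. Under the plain order this means that if $u$ is a proper prefix of $w$ then $w\prec u$ holds automatically, with no parity to exploit; so in the case $v\prec v^*$ (goal: $u\prec w$) the configuration ``$u$ a proper prefix of $w$'' cannot be ``handled\dots giving $u\prec w$'' by any parity-free analogue of the Lemma~\ref{maxreg} argument -- it must be shown to be \emph{impossible} (and symmetrically ``$w$ a proper prefix of $u$'' must be excluded when $v^*\prec v$). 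Your proposed substitute for the parity bookkeeping, namely cancelling in $x=z^*vz\preceq z^*v^*z$ to get $v\preceq v^*$, does not do this job in the first configuration: there the identity $x=z^*vz$ does not hold (in the paper's analysis $u=j^r$ and $w=j^s$ are powers of $j$ and $v=j^{n-r}xj^{m-s}$ contains all of $x$, not the other way around), and even if one had $v\preceq v^*$ it is consistent with, not contradictory to, the hypothesis $v\prec v^*$.

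The missing idea is precisely the role of the balanced split $m\in\{n,n-1\}$, which you treat as immaterial (``either splitting gives a valid linear representation\dots only the values of $n$ and $m$\dots matter''). The paper uses $m\leq n$ crucially: when $v\prec v^*$ and $u$ is a proper prefix of $w$ one gets $u=j^r$, $w=j^s$ with $r<s\leq m\leq n$, hence $v=j^{n-r}xj^{m-s}$ with $n-r>m-s$, which forces $v^*\prec v$ -- the desired contradiction; when $v^*\prec v$ and $w$ is a proper prefix of $u$ one gets either $v=j^{n-s}xj^{m-r}$ with $n-s\leq m-r$ (forcing $v\prec v^*$, using $x\preceq x^*$ in the equality case), or else $w=zj^m$, $u=zj^{m+1}$, $x=z^*vz$ -- and only in this last sub-subcase is your cancellation argument the right tool. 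With the unbalanced normal form $jxj^{N-1}$ of Lemma~\ref{maxreg} these $j$-counting contradictions are not available, which is exactly why the two lemmas use different representations. As written, your proposal would not close the two subcases that carry the actual content of the second assertion.
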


\begin{proof} We begin by showing that $\omega$ admits a linear representation of the form $j^Nx$ $(N\geq 1)$ with $|x|_j=0.$ In other words, all occurrences of $j$ in $\omega$ are clustered together. In fact, if this were not the case, then we could write $\omega=jxajyb$ with $a,b\in \A\setminus \{j\}$ and $x,y\in \A^*.$ But then the factorization $\omega=uv$ with $u=jxa$ and $v=jyb$ is synchronizing since $u^*\prec u$ and $v^*\prec v.$ Having established the claim, short of replacing $\omega$ by $\omega^*,$ we may assume without loss of generality that $\omega=j^Nx$ with $x \preceq x^*.$ Put $y=j^nxj^m$ where $n+m=N$ and  $m\in \{n,n-1\}.$ Then $y$ is a linear representation of $\omega.$ 

Now assume $y=u^*vw$ with $v\neq v^*$ and $u\neq w. $ First suppose $v\prec v^*.$ We will show that $u\prec w.$ By considering the factorization $w=v(wu^*)$ we have that $uw^*\preceq wu^*.$ If neither $u$ nor $w$ is a proper prefix of the other, then $u\prec w$ as required. Also, if $w$ is a proper prefix of $u,$ then $u\prec w.$ So assume that $u$ is a proper prefix of $w.$ We will actually show that this cannot happen. First note that $u\neq \varepsilon$ 
since otherwise $v$ would be a non-palindromic proper prefix of $y$ and hence $v*\prec v$ contrary to our assumption. 
It follows that $u=j^r$ and $w= j^s$ for some $r<s\leq m\leq n.$ Then $v=j^{n-r}xj^{m-s}$ and since $n-r>m-s$ we get that $v^*\prec v,$ a contradiction. 

Next suppose  $v^*\prec v.$  Then $wu^*\preceq uw^*.$ So if neither $u$ nor $w$ is a proper prefix of the other, then $w\prec u.$ Also, if $u$ is a proper prefix of $w$ then $w\prec u$ as required. So let's suppose that $w$ is a proper prefix of $u.$ We will actually show that this cannot happen. First note that $w\neq \varepsilon$ for otherwise $v$ would be a non-palindromic proper suffix of $y$ and hence $v\prec v^*$ contrary to our assumption. 
If $w=j^r$ for some $r,$ then $u=j^s$ for some $s>r.$
But then $v=j^{n-s}xj^{m-r}$ and furthermore $n-s\leq m-r.$ So if $n-s<m-r,$ then $v\prec v^*$ while if $n-s=m-r$ then $v<\prec v^*$ since $x\preceq x^*$ and $v\neq v^*.$ So either way we get a contradiction. Finally, if $w\neq j^r,$ then $w=zj^m$ and $u=zj^{m+1}$ and $x=z^*vz.$ Since $x\preceq x^*$ and $v\neq v^*$ it follows that $v\prec v^*,$ a contradiction. This completes the proof of the lemma.  \end{proof}

\begin{proposition} For each symmetric cyclic Abelian class $\X$ over the ordered alphabet $\A$ we have $|\U (\X)|=1.$
In particular, if $\A\subseteq \{2,3,4,\ldots,\}$, then  each cyclic Abelian class $\C$ over $\A$ contains a unique (up to reversal) cyclic word $\omega$ with the property that $\Ksc(\omega)=\min \{\Ksc(\nu) : \nu \in \C\}.$
\end{proposition}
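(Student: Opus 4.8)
The plan is to mirror the arguments used for the two preceding propositions (on $\Singalt$ and $\Ualt$), the only new ingredient being Lemma~\ref{minsemireg} in place of Lemmas~\ref{minreg0}--\ref{minreg2} and Lemma~\ref{maxreg}. First I would dispose of the lower bound $|\U(\X)|\geq 1$: since membership in $\U(\A)$ depends only on the order type of $\A$, we may pick a representative alphabet contained in $\{2,3,4,\ldots\}$; then the (finite, hence attained) minimum of $\Ksc$ over any cyclic Abelian class $\C$ with symmetric class $\X$ is realized by some $\omega$, and Proposition~\ref{maxmin}(3) gives $\omega\in\U(\C)$, so $\bar\omega\in\U(\X)$ and the set is nonempty.

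For the upper bound $|\U(\X)|\leq 1$, suppose $\bar\omega\in\U(\X)$. By Lemma~\ref{minsemireg}, one of $\omega,\omega^*$ has a linear representation $y=j^nxj^m$ with $j=\max(\omega)$, $|x|_j=0$, $x\preceq x^*$, $m\in\{n,n-1\}$, and with the property that for every factorization $y=u^*vw$ (with $v\neq v^*$, $u\neq w$) one has $v\prec v^*$ if and only if $u\prec w$. Now suppose $\bar\nu\in\U(\X)$ with $\bar\nu\neq\bar\omega$; applying Lemma~\ref{minsemireg} to $\nu$ produces a linear word $z$, a representative of $\nu$ or $\nu^*$, satisfying the same factorization condition. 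The words $y$ and $z$ have the same Parikh vector (that of $\X$), so they lie in one linear Abelian class, and $y\neq z$: an equality $y=z$ would mean $y$ and $z$ represent the same cyclic word, forcing $\omega$ or $\omega^*$ to coincide with $\nu$ or $\nu^*$ and hence $\bar\omega=\bar\nu$. Thus we would have two distinct linear words in a single Abelian class each satisfying the ``linear singular'' factorization criterion, contradicting Theorem~2 in \cite{Ram}. Hence $|\U(\X)|=1$.

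The ``in particular'' statement then follows at once: for $\A\subseteq\{2,3,4,\ldots\}$ and any cyclic Abelian class $\C$, a minimizer $\omega$ of $\Ksc$ over $\C$ exists, lies in $\U(\C)$ by Proposition~\ref{maxmin}(3), and since $|\U(\X)|=1$ all such minimizers coincide in $\mathfrak{X}$, i.e., are unique up to reversal (consistency of reversal being clear from $\Ksc(\omega)=\Ksc(\omega^*)$). The step demanding the most care is the extraction of the precise linear factorization condition on $y$ from the purely cyclic hypothesis $\omega\in\U(\A)$ together with the normal form $y=j^nxj^m$ --- in particular the treatment of the degenerate factorizations where $u$ or $w$ is empty or a power of $j$ --- but this is exactly what Lemma~\ref{minsemireg} already supplies, so here the proposition reduces to a short deduction from that lemma and Ramharter's uniqueness theorem.
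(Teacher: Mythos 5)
Your proposal is correct and follows essentially the same route as the paper: nonemptiness via Proposition~\ref{maxmin} (after transporting to an integer alphabet), and uniqueness by extracting the linear word $y$ from Lemma~\ref{minsemireg} and invoking Theorem~2 of Ramharter to rule out a second such word in the Abelian class. Your explicit justification that $y\neq z$ is a small but welcome addition the paper leaves implicit.
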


\begin{proof} That $|\U (\X)|\geq 1$ follows from Proposition~\ref{maxmin}. In order to show that $|\U (\X)|\leq 1,$ assume $\bar \omega \in \U (\X).$ Then, by Lemma~\ref{minsemireg}, the pair $\{\omega,\omega^*\}$ determines a linear word $y$ which is a linear representation of $\omega$ or of $\omega^*,$ and with the property that for all factorizations $y=u^*vw$ ($u,v,w \in \A^*)$ with $v\neq v^*$ and $u\neq w$ we have $v\prec v^*$ if and only if $u\prec w.$ So if $ \U(\X)$ contains another cyclic class $\bar \nu \neq \bar \omega,$  then the Abelian class of $y$ would contain another linear word $z$ with the property that  
for all factorizations $z=u^*vw$ with $v\neq v^*$ and $u\neq w$ we have $v\prec v^*$ if and only if $u\prec w.$  But this contradicts Theorem~2 in \cite{Ram}. The last statement of the proposition follows immediately from Proposition~\ref{maxmin}. \end{proof}

\begin{remark}Let $\A \subseteq \{2,3,4,\ldots\}$ and let $\C$ be a a cyclic Abelian class over $\A.$ For $\omega\in \C, $  if $\omega$ minimizes the valuation $\Ksc$ restricted to $\C,$ then $\omega$ admits a linear representation $y$ which minimizes $\Ks$ restricted to the Abelian class of $y.$ 
\end{remark}

\begin{proposition}\label{lintocirc} Let $j=\max \A.$ Assume $x\in (\A\setminus\{j\})^+.$ Then $xj\in \Sing(\A)$ (resp.  $xj\in \Singalt (\A))$ if and only if for all factorizations $x=u^*vw$ with $v\neq v^*$ and $u\neq w$  one has $v\prec v^* \Longleftrightarrow w\prec u$ (resp.  $v\prec_{alt} v^* \Longleftrightarrow w\prec_{alt} u)$
\end{proposition}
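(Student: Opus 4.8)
The plan is to set up an explicit bijection between the factorizations of the cyclic word $\omega=xj$ into two non-palindromes and the factorizations $x=u^{*}vw$ occurring in the statement, and then to check that under this correspondence the synchronizing (resp.\ alt-synchronizing) property translates exactly into the condition ``$v\prec v^{*}\Longleftrightarrow w\prec u$'' (resp.\ with $\prec_{alt}$).

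First I would note that, since $x$ contains no occurrence of $j=\max\A$, the cyclic word $\omega=xj$ has exactly one occurrence of the letter $j$. Hence in any factorization $\omega=PQ$ with $P\neq P^{*}$ and $Q\neq Q^{*}$, the letter $j$ lies in exactly one of $P,Q$; since both the synchronizing property and the target condition are symmetric under exchanging $P$ and $Q$, we may assume $j$ occurs in $Q$. Writing $Q=wju^{*}$ (where $w,u^{*}\in\A^{*}$ contain no $j$, a decomposition which is unique because there is a single $j$) and $P=v$, the relation $PQ=v\,w\,j\,u^{*}\sim u^{*}\,v\,w\,j=xj$ yields $x=u^{*}vw$; conversely every factorization $x=u^{*}vw$ produces the factorization $\omega=v\cdot(wju^{*})$. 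It remains to record that $P\neq P^{*}$ is the same as $v\neq v^{*}$ (immediate, as $P=v$), and that $Q\neq Q^{*}$ is equivalent to $u\neq w$: indeed $Q^{*}=ujw^{*}$, and since $j$ exceeds every letter appearing in $u$ and $w$, a short comparison shows $Q=Q^{*}$ forces $u=w$. This establishes the desired bijection.

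The crux is then to show that, under this bijection, $Q\prec Q^{*}\Longleftrightarrow w\prec u$ (and likewise for $\prec_{alt}$); note that the companion equivalence $P\prec P^{*}\Longleftrightarrow v\prec v^{*}$ is trivial since $P=v$. I would argue by cases on the prefix relationship between $u$ and $w$. If neither is a prefix of the other, then $Q=wju^{*}$ and $Q^{*}=ujw^{*}$ already disagree within their first $\min(|u|,|w|)$ letters, precisely where $w$ and $u$ first disagree, so the comparison of $Q$ with $Q^{*}$ is literally that of $w$ with $u$, for both orders. If one of them, say $w$, is a proper prefix of the other, the first discrepancy between $Q$ and $Q^{*}$ occurs at position $|w|+1$, where one of the two words carries the maximal letter $j$ and the other a strictly smaller letter; reading off the inequality settles the plain case at once, while for $\prec_{alt}$ the outcome depends on the parity of $|w|$ --- and one checks this is exactly the parity condition that the definition of $\prec_{alt}$ imposes when comparing a word with one of its proper prefixes. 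The same cases also cover the degenerate possibilities $u=\varepsilon$ or $w=\varepsilon$.

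Putting the pieces together: $xj\in\Sing(\A)$ means every factorization $\omega=PQ$ into two non-palindromes is synchronizing, i.e.\ $P\prec P^{*}\Longleftrightarrow Q\prec Q^{*}$; transporting this along the bijection turns it into ``for all factorizations $x=u^{*}vw$ with $v\neq v^{*}$ and $u\neq w$ one has $v\prec v^{*}\Longleftrightarrow w\prec u$'', which is the assertion. Running the whole argument with $\prec_{alt}$ in place of $\prec$ gives the $\Singalt$ statement. The only delicate point is the parity bookkeeping in the alternating version of the central comparison; the rest is a routine, if somewhat fussy, dictionary between cyclic factorizations of $xj$ and linear factorizations of $x$.
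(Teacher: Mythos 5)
Your proposal is correct and follows essentially the same route as the paper: identify the unique occurrence of $j$, write the cyclic factorization as $\omega=v\cdot(wju^*)$ so that it corresponds to $x=u^*vw$, and reduce the comparison $wju^*$ vs.\ $ujw^*$ to $w$ vs.\ $u$ by a case analysis on whether $u$ and $w$ are prefixes of one another, with the parity of the common prefix handling the alternating order. The only detail you leave implicit --- the paper's convention that a word precedes its proper prefixes under $\prec$ (and the parity-dependent version for $\prec_{alt}$) --- is exactly what makes the proper-prefix cases come out consistently, and the paper spells those cases out explicitly.
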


\begin{proof} Assume $xj\in \Sing(\A)$ and that $x=u^*vw$ with $v\neq v^*$ and $u\neq w.$  Then $v\prec v^* \Longleftrightarrow wju^*\preceq ujw^*.$
If $u$ or $w$ are not proper prefixes of one another, then the second inequality is equivalent to $w\prec u.$  Moreover,  if $u$ is a proper prefix of $w$ then the inequalities $w\prec u$ and $wju^*\preceq ujw^*$ are each true while if $w$ is a proper prefix of $u$ then both inequalities are false. 
Next suppose that for all factorisations $x=u^*vw$ with $v\neq v^*$ and $u\neq w$  one has $v\prec v^* \Longleftrightarrow w\prec u.$ Let $\omega \in \Ac$ be represented by $xj.$ We will show that $\omega \in \Sing(\A).$ Write $\omega =UV$ with $U\neq U^*$ and $V\neq V^*.$ Assume without loss of generality that $j$ occurs in $U.$ Set $U=wju^*$ and $V=v.$ Then $x=u^*vw$ with $v\neq v^*$ and $u\neq w.$ Thus $v\prec v^* \Longleftrightarrow w\prec u.$ If $u$ and $w$ are not prefixes of one another then $V\prec V^*  \Longleftrightarrow v\prec v^* \Longleftrightarrow w\prec u \Longleftrightarrow U\prec U^*.$ Also, the latter two inequalities are both true if $u$ is a proper prefix of $w$ and are both false if $w$ is a proper prefix of $u.$

Next assume $xj\in \Singalt(\A)$ and that $x=u^*vw$ with $v\neq v^*$ and $u\neq w.$  Then $v\prec_{alt} v^* \Longleftrightarrow wju^*\preceq_{alt} ujw^*.$ If $u$ or $w$ are not proper prefixes of one another, then the second inequality is equivalent to $w\prec_{alt} u.$ Assume $u$ is a proper prefix of $w;$ if $|u|$ even, then the inequalities $w\prec_{alt} u$ and $wju^*\preceq_{alt} ujw^*$ are each true while if $|u|$ is odd then both inequalities are false. Similarly, assume$w$ is a proper prefix of $u;$ if $|w|$ is odd then the inequalities $w\prec_{alt} u$ and $wju^*\preceq_{alt} ujw^*$ are each true while if $|w|$ is even then both inequalities are false. Conversely suppose that for all factorisations $x=u^*vw$ with $v\neq v^*$ and $u\neq w$  one has $v\prec_{alt} v^* \Longleftrightarrow w\prec_{alt} u.$ Let $\omega \in \Ac$ be represented by $xj.$ We will show that $\omega \in \Singalt(\A).$ Write $\omega =UV$ with $U\neq U^*$ and $V\neq V^*.$ Assume without loss of generality that $j$ occurs in $U.$ Set $U=wju^*$ and $V=v.$ Then $x=u^*vw$ with $v\neq v^*$ and $u\neq w.$  Thus $v\prec_{alt} v^* \Longleftrightarrow w\prec_{alt} u.$ If $u$ and $w$ are not prefixes of one another then $V\prec_{alt} V^*  \Longleftrightarrow v\prec_{alt} v^* \Longleftrightarrow w\prec_{alt} u \Longleftrightarrow U\prec_{alt} U^*.$ Also, the latter two inequalities are both true if $u$ is a proper prefix of $w$  and $|u|$ is even or if $w$ is a proper prefix of $u$ and $|w|$ is odd. Similarly, the latter two inequalities are both false if $u$ is a proper prefix of $w$  and $|u|$ is odd or if $w$ is a proper prefix of $u$ and $|w|$ is even. 
\end{proof}

\section{Cyclic singular words}

%In light of the above proposition, we shall be interested in understanding the structure of those cyclic words $w$ over an ordered alphabet for which all factorisations $w=uv$ are synchronizing (resp. alt-synchronizing) : 
 
\begin{definition}\rm{A cyclic word $w\in \Ac$ is said to be {\it singular} (resp. {\it alt-singular}) if all factorisations $w=uv$  are synchronizing (resp. alt-synchronizing). }
\end{definition}

\begin{remark}
    \label{t:1sqlet}
    By definition, if $a,a',c,c'\in\A$ are such that $ac$ and $a'c'$ are
    % non-over\-lapping
    factors of some cyclic singular word $\omega$, then
    $a<c'$ implies $a'\leq c$.
    % In particular, if $\omega\in\Ac$ is singular,
    % there exists at most one letter $b$ such that $bb$ occurs in $\omega$.
\end{remark}

We note that the property of being singular (resp. alt-singular) is invariant under reversal, i.e., $w$ is singular (resp. alt-singular) if and only if $w^*$ is singular
 (resp. alt-singular). Thus Proposition~\ref{maxmin} could have been stated in terms of the symmetric cyclic Abelian class $\mathfrak{X}(w).$  We next show that for each cyclic word $w,$ the symmetric cyclic Abelian class $\mathfrak{X}(w)$ contains both singular elements and alt-singular elements:

\begin{proposition}\label{DG} Each symmetric cyclic Abelian class $\mathfrak{X}$ contains both singular elements and alt-singular elements.
\end{proposition}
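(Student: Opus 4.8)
The plan is to deduce this from Proposition~\ref{maxmin}, after first reducing to the case of an alphabet consisting of integers greater than $1$. The point of the reduction is that ``singular'' and ``alt-singular'' are notions that depend only on the order type of $\A$, whereas Proposition~\ref{maxmin} is stated for $\A\subseteq\{2,3,4,\ldots\}$.

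First I would record the invariance under alphabet order-isomorphism. Given an ordered alphabet $\A=\{a_1<a_2<\cdots<a_k\}$, let $\phi\colon\A\to\{2,3,\ldots,k+1\}$ be the unique increasing bijection. It extends to an isomorphism $\A^+\to\{2,\ldots,k+1\}^+$ of free semigroups and, passing to quotients, to a bijection $\Ac\to\{2,\ldots,k+1\}^\circlearrowright$ that commutes with reversal $w\mapsto w^*$ and transports Parikh vectors in the obvious way. Since $\phi$ is order preserving, $u\prec v$ iff $\phi(u)\prec\phi(v)$ for all $u,v\in\A^+$, and likewise for $\prec_{alt}$; hence a factorization $w=uv$ is synchronizing (resp. alt-synchronizing) if and only if $\phi(w)=\phi(u)\phi(v)$ is. Consequently $w$ is singular (resp. alt-singular) if and only if $\phi(w)$ is, and $\phi$ maps each symmetric cyclic Abelian class $\X$ over $\A$ bijectively onto a symmetric cyclic Abelian class over $\{2,\ldots,k+1\}$. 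It therefore suffices to prove the proposition when $\A\subseteq\{2,3,4,\ldots\}$, endowed with the usual order.

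So assume $\A\subseteq\{2,3,4,\ldots\}$ and write $\X=\{\bar\omega:\omega\in\C\}$ for the corresponding cyclic Abelian class $\C$. Since $\A$ is finite and the Parikh vector underlying $\C$ is fixed, $\C$ is a nonempty finite set, so $\Ksc$ and $\Kc$ attain a maximum and a minimum on $\C$. Choosing $\omega\in\C$ with $\Ksc(\omega)=\max\{\Ksc(\nu):\nu\in\C\}$, Proposition~\ref{maxmin}(4) yields $\omega\in\Sing(\C)$; choosing $\omega'\in\C$ with $\Kc(\omega')=\min\{\Kc(\nu):\nu\in\C\}$, Proposition~\ref{maxmin}(1) yields $\omega'\in\Singalt(\C)$. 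Because singularity and alt-singularity are invariant under reversal, $\bar\omega\in\X$ is a singular element and $\overline{\omega'}\in\X$ is an alt-singular element; transporting these back through $\phi^{-1}$ settles the general case.

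I do not expect a genuine obstacle here: the argument is an assembly of facts already in place, and the only point needing a (routine) check is the invariance under order-isomorphism used in the reduction, which holds because every relevant notion — the orders $\prec$ and $\prec_{alt}$, palindromicity, synchronization, and Parikh vectors — depends only on the order type of $\A$. All the real content is hidden in Propositions~\ref{val} and~\ref{maxmin}, which supply, inside any fixed cyclic Abelian class over $\{2,3,\ldots\}$, an actual extremizer of $\Ksc$ (forced to be singular) and of $\Kc$ (forced to be alt-singular).
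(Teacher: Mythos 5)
Your proof is correct and follows essentially the same route as the paper: reduce to $\A\subseteq\{2,3,\ldots\}$ via an order-preserving bijection (noting that synchronization depends only on the order type), then use the strict monotonicity of $\Ksc$ and $\Kc$ under non-synchronizing swaps (Proposition~\ref{val}, packaged in Proposition~\ref{maxmin}) to exhibit an extremizer, which must be singular resp.\ alt-singular. The paper merely dresses the same argument as the acyclicity of a finite directed graph on $\mathfrak{X}$ (a framework it reuses later), whereas you invoke the attained maximum/minimum directly.
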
  

\begin{proof} Let $\mathfrak{X}$ be a symmetric cyclic Abelian class with Parikh vector $(n_i)_{i\in \A}.$  The order $\prec$ allows us to endow $\mathfrak{X}$ with the structure of a directed graph as follows: We put a directed edge from the cyclic word $w=uv\in \mathfrak{X}$  to the cyclic word $w'=u^*v\in \mathfrak{X}$ if the factorization $w=uv$ is non-synchronizing. Thus to each non-synchronizing factorization of $w$ corresponds an outgoing edge from $w$ to some other cyclic word. In particular, a cyclic word $w\in \mathfrak{X}$ is singular if and only if $w$ has no outward directed edges. Notice that this construction depends only the Parikh vector corresponding to $\mathfrak{X}$ and the order on the underlying alphabet. In other words, if $\phi: \A\rightarrow \B$ is an order preserving bijection between two finite ordered alphabets $\A$ and $\B$ and $\mathfrak{X}=\mathfrak{X}(w) $ and $\mathfrak{X'}=\mathfrak{X}(w')$ with $w\in \Ac$ and $w'\in \Bc,$ and $w$ and $w'$ determine the same Parikh vector in the sense that $|w|_a=|w'|_{\phi(a)}$ for each $a\in \A,$ then the resulting directed graphs are isomorphic. In other words, if in  $\mathfrak{X}$ there is a directed edge from $x$ to $x'$ then in   $\mathfrak{X'}$ there is a directed edge from $\phi(x)$ to $\phi(x').$ 

In order to show that $\mathfrak{X}$ contains a singular element, it suffices to show that the directed graph  $\mathfrak{X}$ defined above is acyclic. Pick  $\B\subset \{2,3,4,\ldots \}$ and an order preserving bijection $\phi : \A \rightarrow \B.$ Fix $w\in \mathfrak{X}$ and put $\mathfrak{X'}=\mathfrak{X}(\phi(w)).$  It follows from Proposition~\ref{val} that if there is a directed edge from $x$ to $x'$ in $\mathfrak{X'},$ then $\Ksc(x')>\Ksc(x).$ 
It follows from this that $\mathfrak{X'}$ is acyclic and hence the same is true of $\mathfrak{X}.$

To prove the existence of an alt-singular element in  $\mathfrak{X}=\mathfrak{X}(w)$ we proceed analogously with $\prec$ replaced by the alternating order $\prec_{alt}.$ More precisely, we endow  $\mathfrak{X}$ with the structure of a directed graph by putting a directed edge from a cyclic word $x=uv\in \mathfrak{X}$  to the cyclic word $x'=u^*v\in \mathfrak{X}$ if the factorization $x=uv$ is not alt-synchronizing. We then pick set $\B\subset \{2,3,4,\ldots \}$ and an order preserving bijection $\phi : \A \rightarrow \B$ and put  $\mathfrak{X'}=\mathfrak{X}(\phi(w)).$ By applying item 1. of Proposition~\ref{val} with the valuation $\Kc$ we deduce that $\mathfrak{X'}$ is acyclic and hence so is $\mathfrak{X}.$ \end{proof} 

We will later see that both directed graphs defined on $\mathfrak{X}$ in the proof of Proposition~\ref{DG} are connected as graphs. In fact, in each case there is a single vertex having no incoming edges. In other words, there exists a unique $w\in \mathfrak{X}$ with the property that all factorisations $w=uv$ are non synchronizing (resp. non alt-synchronizing). In contrast, a symmetric cyclic Abelian class may contain multiple singular elements. For instance, over the ordered alphabet $\A=\{a<b<c\},$ let $\mathfrak{X}$ denote  the symmetric cyclic Abelian class whose Parikh vector is $(2,2,2).$ Then the  directed graph associated with $\prec$ admits a unique vertex with no incoming edges represented by the cyclic word $aabccb$ which is a palindrome.  In contrast, there are two vertices with no outgoing edges represented by the cyclic words $abcabc$ and $abbcac$ and their reversals.   
 The following proposition shows that on a binary ordered alphabet, each cyclic Abelian class contains a unique singular cyclic word:
 
 \begin{proposition} Let $\A=\{a<b\}$ and let $w \in \Ac.$ The following are equivalent
 \begin{enumerate}
 \item $w$ is singular.
 \item $w$ is balanced.
 \item $w$ is a Christoffel word.
 \end{enumerate}
 \end{proposition}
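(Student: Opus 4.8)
The plan is to prove the three-way equivalence by establishing the cycle $1 \Rightarrow 2 \Rightarrow 3 \Rightarrow 1$, where the last implication is essentially definitional once one recalls the standard characterisation of (cyclic) Christoffel words as the balanced cyclic words, so the real content lies in $1 \Rightarrow 2$ and the reverse direction $2 \Rightarrow 1$. For $1 \Rightarrow 2$, I would argue contrapositively: if $w$ is not balanced, then there exist two factors of the same length witnessing imbalance, and by the usual combinatorial lemma on unbalanced binary words one can find a letter $c$ and words $p,q$ such that both $apa$ and $bpb$ occur in the cyclic word $w$ (with $a<b$). Remark~\ref{t:1sqlet} then applies: taking $ac=ap\cdots$ read one letter at a time, or more directly invoking the square-letter remark, having $a\cdots a$ and $b\cdots b$ at positions separated by the same block $p$ forces a non-synchronizing factorization. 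Concretely, write $w = u v$ where $u$ starts with $b$ and ends with the $a$ from $apa$, and $v$ starts with $b$ (from $bpb$) and ends with $a$; then comparing $u$ with $u^*$ and $v$ with $v^*$ through the shared central block $p$ yields opposite inequalities, so the factorization is non-synchronizing and $w$ is not singular.

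For the reverse direction $2 \Rightarrow 1$, suppose $w$ is balanced and consider any factorization $w = uv$ with $u \ne u^*$, $v \ne v^*$. I would use that a balanced cyclic word over $\{a<b\}$ is (a power of) a Christoffel word, hence all of its conjugates are themselves Christoffel-like, and in particular the comparison of a factor with its reversal is governed purely by a "mechanical"/rotation structure. The cleanest route is to recall that for a Christoffel word $C_{p,q}$ one has $C_{p,q} = a C' b$ with $C'$ a palindrome, and every proper factor $z$ of the cyclic word satisfies a fixed dichotomy: the lexicographically smallest rotations are well understood. More usefully, I would show directly that if $u \prec u^*$ then $u$ "begins lower than it ends" in a sense that propagates: since $w$ is balanced, $u$ and $v$ are both balanced, and one checks that $u \prec u^* \iff$ the first letter where $u$ and $u^*$ differ has $a$ in $u$, which by balancedness of the whole cyclic word corresponds to $u$ being a prefix of a rotation of $w$ that is $\prec$ its reversal — and the same criterion applied to $v$ gives the same sign, because $u$ and $v$ sit in complementary positions of the same Christoffel cycle. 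This establishes that $w = uv$ is synchronizing, i.e. $w \in \Sing(\A)$.

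I expect the main obstacle to be making the argument in $2 \Rightarrow 1$ clean rather than a case-by-case manipulation of lexicographic comparisons through shared central palindromic blocks. The conceptual statement — balanced binary cyclic words have a rigid comparison structure with their reversals — is true and classical, but turning "$u \prec u^* \iff v \prec v^*$" into a short argument requires picking the right invariant. I would try to phrase it via: every proper nonempty factor $z$ of the Christoffel cycle $C_{p,q}$ (with $p+q = |w|$, $\gcd$ not necessarily $1$ if $w$ is a proper power) satisfies exactly one of $z \prec z^*$, $z = z^*$, $z^* \prec z$, and this is determined by which of the two "extremal" rotations the occurrence of $z$ extends; since $uv$ is the whole cycle, $u$ and $v$ extend to the rotation starting at the same cut point, forcing the two comparisons to agree. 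If a slicker proof resists, the fallback is the direct route: assume for contradiction a non-synchronizing factorization $w = uv$ with say $u \prec u^*$ and $v^* \prec v$, extract from this (as in the proof of Lemma~\ref{minreg00}) the simultaneous occurrence of factors $a\cdots$ and $b\cdots$ in incompatible positions, and contradict balancedness — this essentially reuses the machinery of $1 \Rightarrow 2$ in reverse. Either way, the $1 \Rightarrow 2$ direction via Remark~\ref{t:1sqlet} and the square-letter observation should be short, and $3 \Leftrightarrow 2$ is the standard fact that cyclic Christoffel words are exactly the balanced cyclic words, which I will cite.
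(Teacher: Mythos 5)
Your overall architecture coincides with the paper's: the equivalence of 2 and 3 is cited as standard, and the two remaining implications are both handled contrapositively --- ``not balanced $\Rightarrow$ not singular'' via the classical fact that an unbalanced binary cyclic word contains $apa$ and $bpb$ for some palindrome $p$, and (your stated fallback for $2\Rightarrow 1$) ``not singular $\Rightarrow$ not balanced'' by extracting from a non-synchronizing factorization two equal-length factors with different $a$-counts. That fallback is in fact the paper's actual argument, so you should promote it and drop the vague Christoffel-mechanics route entirely: from $u\prec u^*$ and $v^*\prec v$ one writes $u=xau'bx^*$ and $v=ybv'ay^*$ (with $x$, resp.\ $y$, the longest common prefix of $u$ and $u^*$, resp.\ of $v^*$ and $v$), and then the cyclic word $uv$ contains both $bx^*yb$ and $ay^*xa$, two factors of equal length whose numbers of $a$'s differ by $2$.

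The one step you describe concretely, however, is wrong as stated. You cut $w=uv$ with ``$u$ starts with $b$ and ends with $a$'' and ``$v$ starts with $b$ and ends with $a$''; but then the first letters already decide both comparisons, giving $u^*\prec u$ and $v^*\prec v$ --- the same sign on both sides, i.e.\ a \emph{synchronizing} factorization, which proves nothing. The cut must be made \emph{inside} the two occurrences, so that exactly one of the two comparisons is forced through the shared palindrome $p$: take $u$ to run from the final $a$ of $apa$ to the initial $b$ of $bpb$, so $u=a\cdots b$ and $u\prec u^*$, while $v=pb\cdots ap$, so that $v$ and $v^*=pa\cdots bp$ share the prefix $p$ (here you need $p=p^*$, which the standard imbalance lemma provides) and hence $v^*\prec v$. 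You also need to verify, as the paper does, that the occurrences of $apa$ and $bpb$ do not overlap --- otherwise this is not a factorization of the cyclic word into two factors; this holds because every suffix (resp.\ prefix) of $apa$ has one more $a$ than the prefix (resp.\ suffix) of $bpb$ of the same length. Finally, the appeal to Remark~\ref{t:1sqlet} here is misplaced: that remark is a consequence of singularity about length-two factors and plays no role in this direction.
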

 
 \begin{proof} Recall that a linear or cyclic word $w$ is said to be {\it  balanced} if for all factors $u$ and $v$ of $w$ of equal length one has $||u|_a-|v|_a|\leq 1.$  Some authors require that $|w|_a$ and $|w|_b$ be coprime in the definition of a Christoffel word. Here we adopt the definition given in \cite{BerdeL} which does not require this condition. The equivalence between 2. and 3. is well known (see for instance \cite{deLDeL}) so we shall only prove that 1. and 2. are equivalent. 
 Assume $w\in \Ac$ is not singular. Pick a factorization $w=uv$ with $u\prec u^*$ and $v^*\prec v.$ Writing $u=xau'bx^*$ and $v=ybv'ay^*$ with
 $x,y,u',v' \in \A^*,$ we see that $w$ contains the factors $bx^*yb$ and $ay^*xa$ whence $w$ is not balanced. Conversely, if $w$ is not balanced then there exists a palindrome $x\in \A^*$ such that both $axa$ and $bxb$ are factors of $w.$ Note also that these two factors cannot overlap one another since each prefix (resp. suffix) of $axa$ contains one additional occurrence of $a$ than the corresponding suffix (resp. prefix) of $bxb$ of the same length. Thus we can write $w=uv$ where $u$ begins in $a$ and ends in $b$ and $v$ begins in $xb$ and ends in $ax.$ It follows that $u\prec u^*$ while $v^*\prec v$ and hence $w$ is not singular.   \end{proof} 
 
 In particular, the last result shows that in the binary case, the maximizing
 arrangement for $\Ksc$ is unique. The following example shows that this is not
 necessarily the case for larger integer alphabets.
 \begin{example}
 Let $\A=\{a<b<c<d<e\}$ and $\pv=(1,2,2,2,1)$. Then it is not difficult to check
 that the cyclic singular words with Parikh vector $\pv$ are:
 \[x=bccdbdae,\quad y=bdbccdae,\quad\text{and } z=bcdbcdae,\]
 along with their reverses. Letting $(a,b,c,d,e)=(2,3,4,5,6)$, we obtain
 $\Ksc(x)=22735$, $\Ksc(y)=22751$, and $\Ksc(z)=22646$, so that the maximizing
 arrangement is $y$. Substituting $(a,b,c,d,e)=(2,3,4,10,11)$ instead,
 $\Ksc(x)=213920$ is the maximum, with $\Ksc(y)=213916$ and $\Ksc(z)=211336$.
 Finally, for $(a,b,c,d,e)=(2,3,4,9,10)$, the maximizing arrangement is not even
 unique, as $\Ksc(x)=\Ksc(y)=153347>151598=\Ksc(z)$.
 \end{example}

The following arithmetic observation was recently used by M.~Lapointe~\cite{Lap19}
to determine the number of orbits of a discrete symmetric interval exchange
transformation. We give here a simple proof for the sake of completeness.
\begin{lemma}
    \label{t:mid}
    Let $\pv=(n_d)_{d\in\A}\in\nats^{\A}$ be a non-zero vector.
    Setting
    \begin{equation}
	    \label{e:deltaj}
    	\delta_b=\delta_b(\pv)=\sum_{\substack{c\in\A\\c>b}}n_c
        -\sum_{\substack{a\in\A\\a<b}}n_a
    	% \sum_{c}^m n_k-\sum_{i=1}^{j-1} n_i
    \end{equation}
    for all $b\in\A$, one has either
    \begin{enumerate}
        \item there exists a single letter $b$ such that $n_b>|\delta_b|$ and
        $n_{d}<|\delta_{d}|$ for all other letters $d\neq b$, or
        \item there exist two letters $a<c$ such that $n_a=|\delta_a|>0$ and
        $n_c=|\delta_c|>0$; moreover, $n_b=|\delta_b|=0$ if $a<b<c$, and
        $n_{d}<|\delta_{d}|$ if $d<a$ or $d>c$.
    \end{enumerate}
\end{lemma}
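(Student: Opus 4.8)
The key observation is that the quantities $\delta_b$ change in a controlled way as $b$ ranges over $\A$ in increasing order: if $b<b'$ are consecutive letters of $\A$, then $\delta_{b'}=\delta_b-n_b-n_{b'}$. Hence $\delta_b-\delta_{b'}=n_b+n_{b'}\geq 0$, so the sequence $(\delta_b)_{b\in\A}$ is \emph{non-increasing}. Moreover, writing $s=\sum_{d\in\A}n_d>0$, the smallest letter $a_{\min}$ has $\delta_{a_{\min}}=s-n_{a_{\min}}$ and the largest letter $a_{\max}$ has $\delta_{a_{\max}}=-(s-n_{a_{\max}})$; in particular $\delta_{a_{\min}}\geq 0$ and $\delta_{a_{\max}}\leq 0$, with $\delta_{a_{\min}}<n_{a_{\min}}$ forced only if $n_{a_{\min}}>s-n_{a_{\min}}$, which need not hold. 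The plan is to compare $n_b$ with $|\delta_b|$ along this monotone sequence and isolate the place where the sign of $\delta_b$ switches (or where $\delta_b$ hits $0$).

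First I would record the \emph{sign pattern}: since $(\delta_b)$ is non-increasing and runs from a value $\geq 0$ down to a value $\leq 0$, there is a well-defined ``crossing region''. Precisely, let $b$ be the smallest letter with $\delta_b\leq 0$ (this exists because $\delta_{a_{\max}}\leq 0$). If $b=a_{\min}$ then all $\delta$'s are $\leq 0$; otherwise the predecessor $b^-$ of $b$ has $\delta_{b^-}>0$. The core computation is then the elementary inequality comparing $n_d$ with $|\delta_d|$. For a letter $d$ with $\delta_d>0$, we have $|\delta_d|=\delta_d=\sum_{c>d}n_c-\sum_{a<d}n_a$; since $\delta_d$ is strictly decreasing when we pass to the next letter (dropping by $n_d+n_{d'}$), the condition $n_d\geq |\delta_d|$ can hold for at most the \emph{last} letter in the region $\{\delta>0\}$, and similarly by the symmetric (reversed) argument $n_d\geq|\delta_d|$ for $\delta_d<0$ can hold for at most the \emph{first} letter in the region $\{\delta<0\}$. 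The letters with $\delta_d=0$ need separate, easy handling: if $\delta_b=0$ then $n_b\geq|\delta_b|=0$ always, but I claim at most two letters can have $n_d\geq|\delta_d|$ in total, and a short argument using monotonicity pins down exactly the two cases of the statement.

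Concretely, the two alternatives arise as follows. Case analysis on whether some $\delta_b=0$:
\begin{itemize}
\end{itemize}
(I will replace this with prose to avoid a bullet list.) If no $\delta_b$ equals $0$, let $b$ be the smallest letter with $\delta_b<0$; then $b^-$ (its predecessor, which exists unless all $\delta<0$, an easy boundary subcase) satisfies $\delta_{b^-}>0$, and $\delta_{b^-}-\delta_b=n_{b^-}+n_b$. One checks $n_b-|\delta_b|=n_b+\delta_b=\delta_{b^-}-n_{b^-}<\delta_{b^-}$ and also $=n_b+\delta_b$; playing the two expressions against the sign constraints $\delta_{b^-}>0>\delta_b$ shows exactly one of $b^-,b$ can satisfy $n\geq|\delta|$ and in fact at least one does, because $n_{b^-}+n_b=\delta_{b^-}+|\delta_b|$ forces $n_{b^-}\geq|\delta_{b^-}|$ or $n_b\geq|\delta_b|$ (if both failed we'd get $n_{b^-}+n_b<|\delta_{b^-}|+|\delta_b|$, contradiction). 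Combined with the monotonicity bounds above (no other letter can qualify), this is alternative (1). If instead some $\delta_b=0$, monotonicity forces the set $\{b:\delta_b=0\}$ to be an interval $[a,c]$ of letters; then $n_b=0$ for $a<b<c$ is immediate (consecutive $\delta$'s differ by $n_b+n_{b'}=0$), $n_a=\delta_{a^-}-\delta_a+$ wait—rather $n_a$ equals $\delta_{a^-}$ minus $n$ of $a$'s predecessor... I would instead note $\delta_{a^-}=n_a>0$ and $\delta_{c^+}=-n_c<0$ directly from the recursion $\delta_{a^-}-\delta_a=n_{a^-}+n_a$ with $\delta_a=0$ being handled by choosing $a$ minimal in the zero-interval, giving $n_a=|\delta_a|$... the cleanest route is $n_a=\delta_{a^-}=|\delta_{a^-}|$ is false—let me just say: a direct check gives $n_a=|\delta_a|>0$ is \emph{wrong notation}; the statement wants $n_a=|\delta_a|$ where here $a$ is the letter just \emph{below} the zero-interval. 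I would align indices carefully so that the two ``$n=|\delta|$'' letters are the first and last of the zero-interval's boundary, yielding alternative (2).

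\textbf{Main obstacle.} The genuine content is the bookkeeping of which letter indices carry $\delta=0$ versus a strict sign, and making sure the count of letters with $n_d\geq|\delta_d|$ is \emph{exactly} one or two rather than ``at most'' — this requires the pigeonhole-type inequality $n_{b^-}+n_b=|\delta_{b^-}|+|\delta_b|$ (when $\delta$ changes sign) or its degenerate zero-interval analogue, to guarantee existence, not just uniqueness. Everything else is routine manipulation of the telescoping identity for $\delta_b$.
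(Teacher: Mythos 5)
Your telescoping identity $\delta_{b'}=\delta_b-n_b-n_{b'}$ for consecutive letters, the resulting monotonicity of $(\delta_b)$, and the pigeonhole inequality $n_{b^-}+n_b=\delta_{b^-}+|\delta_b|$ at the sign change are all correct, and are in substance an algebraic rendering of the paper's argument (the paper rewrites $n_b\geq|\delta_b|$ as $\tfrac12\sum_d n_d\geq\max\{\sum_{a<b}n_a,\sum_{c>b}n_c\}$, i.e.\ ``the midpoint of the interval falls in the $b$ section'', which makes the dichotomy immediate). However, your case split does not match the lemma's, and this produces two genuine gaps. First, in your ``no $\delta_b=0$'' branch you assert that \emph{exactly one} of $b^-,b$ can satisfy $n\geq|\delta|$ and conclude that this branch always gives alternative (1); but both can satisfy it, and then the pigeonhole identity forces equality on both sides ($n_{b^-}=\delta_{b^-}>0$ and $n_b=|\delta_b|>0$), which is alternative (2) with $a=b^-$ and $c=b$ adjacent. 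Take $\A=\{1<2\}$ and $\pv=(1,1)$: $\delta_1=1=n_1$, $\delta_2=-1=-n_2$, no $\delta$ vanishes, and the correct conclusion is case 2. Conversely, a vanishing $\delta$ is compatible with case 1: for $\pv=(1,2,1)$ one has $\delta_2=0<2=n_2$ while the outer letters fail strictly, so your second branch cannot uniformly ``yield alternative (2)'' either.

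Second, the zero-interval branch is left unresolved, as your own text concedes (``wait---'', ``is false---let me just say''). The letters $a$ and $c$ of alternative (2) are \emph{not} elements of the zero-set $\{b:\delta_b=0\}$; they are its two neighbours (or the two adjacent letters straddling the sign change when that set is empty), characterized by $\sum_{e\leq a}n_e=\sum_{e\geq c}n_e=\tfrac12\sum_{e}n_e$, from which one computes directly $\delta_a=n_a>0$ and $\delta_c=-n_c<0$. You also need to separate the sub-case where a letter $b$ with $\delta_b=0$ has $n_b>0$ (then it is the unique letter of case 1) from the sub-case $n_b=0$ (then it lies strictly between $a$ and $c$ in case 2). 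Your monotonicity bounds --- at most one qualifying letter in the region $\{\delta>0\}$, namely the last one, and at most one in $\{\delta<0\}$, namely the first --- are correct and do most of the work, but as written the argument neither produces the lemma's dichotomy nor completes the identification and existence step for case 2.
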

\begin{proof}
    Let $N=\sum_{d\in\A}n_d$ and consider the interval $[0,N]$ partitioned into
    sections of length $(n_d)_{d\in\A}$ in order. By straightforward
    calculation, $n_b\geq |\delta_b|$ is equivalent to
    \[\frac N2\geq\max\left\{\sum_{a<b}n_a,\,
    \sum_{c>b}n_c\right\},\]
    that is, the midpoint of the interval falling within the $b$ section.
    This happens to a single sub-interval of positive length if the inequality
    is strict. Otherwise, the midpoint hits the division between two
    sections of positive lengths $n_a$ and $n_c$, possibly separated by others
    of length $0$.
\end{proof}

For $\omega\in\Ac$ with Parikh vector $\pv$, and $b\in\A$, we set
$\delta_b(\omega)=\delta_b(\pv)$, defined as in \cref{e:deltaj}.
% \[\delta_b(\omega)=\sum_{\substack{c\in\A\\c>b}}|\omega|_c
% -\sum_{\substack{a\in\A\\a<b}}|\omega|_a.\]
We simply write $\delta_b$ for $\delta_b(\omega)$ when the context is clear.
By \Cref{t:1sqlet}, if $\omega$ is singular, then there is at most one letter
$b$ such that $bb$ occurs in $\omega$, and by \Cref{t:mid} there is at most
one $b'$ such that $0<|\delta_{b'}|<|\omega|_{b'}$. As we shall see, such
letters coincide in general.

The following lemma was proved in~\cite{DeLEdZam} in the ternary linear case:

\begin{lemma}
	\label{t:runs}
    Let $\omega\in\Ac$ be a cyclic singular word, and $b\in\A$. The following
    hold:
    \begin{enumerate}
        \item\label{nodd}
        If $\delta_b>0$ (resp.~$\delta_b<0$) and $d,d'\leq b$
        (resp.~$d,d'\geq b$) are letters with $dd'\neq bb$, then $dd'$ does not
        occur in $\omega$.
        \item\label{runs} If $0<|\delta_b|\leq |\omega|_b$, then $\omega$
        contains exactly $|\delta_b|$ runs of consecutive $b$, and no factor
        $dd'$ where $d,d'\in\A$ are both larger or both smaller than $b$.
    \end{enumerate}
    % If $bb$ occurs in $\omega$, then $|\delta_b|<|\omega|_b$. Conversely, if
    % $0<|\delta_b|<|\omega|_b$, then $bb$ is a factor of $\omega$.
\end{lemma}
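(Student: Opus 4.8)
The plan is to prove both items together by a careful case analysis on the sign of $\delta_b$, exploiting \Cref{t:1sqlet} (the remark that $ac$ and $a'c'$ factors of a cyclic singular word with $a<c'$ forces $a'\leq c$) and \Cref{t:mid}. I will treat the case $\delta_b>0$ and the symmetric letters $d,d'\leq b$; the case $\delta_b<0$ follows by reversing the order on $\A$ (which, as noted, preserves singularity and flips the sign of every $\delta$). First I would establish item~\ref{nodd}: suppose for contradiction that $dd'$ occurs in $\omega$ with $d,d'\leq b$ and $dd'\neq bb$. By \Cref{t:mid}, since $\delta_b>0$ (so $n_b\geq|\delta_b|>0$ is \emph{not} implied, rather $\delta_b>0$ means the midpoint of the interval lies at position $\sum_{c>b}n_c$ from the right which is strictly less than $N/2$), there are more than $N/2$ letters $\leq b$; equivalently $\sum_{a\le b}n_a>N/2>\sum_{c>b}n_c$. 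The key combinatorial point is that an occurrence of $dd'$ with both letters $\le b$ and $dd'\ne bb$ — hence with $\min(d,d')<b$ — can be ``rotated against'' the large block of letters $\le b$ to produce a non-synchronizing factorization. Concretely, writing $\omega=uv$ where $u$ starts at the occurrence of $dd'$ and $v$ is the complementary arc, one counts occurrences: the arc containing ``most'' of the small letters will read, from one end, as something lexicographically forced, while the $dd'$ juncture forces the opposite inequality between a word and its reversal; this contradicts singularity. I expect the cleanest way to run this is to use \Cref{t:1sqlet} directly: if $dd'$ and $bb$ both occurred (or $dd'$ occurred with no $bb$ at all), then applying the remark to the pairs of factors in both orders $dd'$ and $d'd$ and comparing with the abundance of letters $\le b$ yields that \emph{every} letter $\le b$ other than in the $dd'$ block would have to be adjacent only to $b$, which is incompatible with $\sum_{a\le b}n_a>N/2$ unless $b$ itself is repeated enough times — and then $n_b$ is large, contradicting that we are in case~1 of \Cref{t:mid} (single letter $b$) or forcing $dd'=bb$.

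Once item~\ref{nodd} is in hand, item~\ref{runs} is comparatively short. Assume $0<|\delta_b|\le|\omega|_b$; say $\delta_b>0$. Applying item~\ref{nodd} with $d,d'$ ranging over all letters $\le b$ with $dd'\ne bb$, we learn that in $\omega$ no two letters $\le b$ are adjacent except possibly $b$ with itself. Hence every maximal run of consecutive letters that are all $\le b$ is in fact a run of $b$'s (a ``run of $b$''). It remains to count these runs. Read $\omega$ cyclically and mark the positions; the runs of $b$ partition the $|\omega|_b$ copies of $b$ into, say, $r$ runs, and between consecutive runs of $b$ (cyclically) there is a nonempty arc consisting entirely of letters $>b$. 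So $\omega$ has exactly $r$ runs of letters $>b$ as well, and $r$ is what we must show equals $\delta_b$. The identity to exploit is the ``signed crossing'' count: going around the cycle, each run of $b$ contributes $+1$ and... actually the clean argument is: $\delta_b=\sum_{c>b}n_c-\sum_{a<b}n_a$; but by the structure just derived, every letter $<b$ sits \emph{inside} a block of letters $>b$ (it is adjacent on both sides to letters $>b$, since it cannot be adjacent to any letter $\le b$), so the ``$\le b$ part'' of $\omega$ consists of $r$ runs of $b$ and the letters $<b$ are scattered among the $>b$ arcs. This does not immediately give the count, so instead I would argue via \Cref{t:mid} and a minimality/rotation argument: the number of runs of $b$ equals the number of ``descents across level $b$'' which, for a singular word, must be as small as possible, and \Cref{t:mid} pins this minimum at exactly $|\delta_b|$.

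The step I expect to be the main obstacle is proving item~\ref{nodd} — specifically, converting ``$dd'$ occurs with $d,d'\le b$, $dd'\ne bb$'' into an explicit non-synchronizing factorization $\omega=uv$. The delicate point is choosing where to cut: one cut endpoint should be at the $dd'$ juncture (which, since $\min(d,d')<b$ and there are many letters $\le b$ elsewhere, forces one of $u\prec u^*$ or $u^*\prec u$ by comparing the small letter against the surrounding large letters), and the other endpoint must be placed so that $v$ reads the large block of $\ge b$ letters in a way that forces the \emph{same} strict inequality between $v$ and $v^*$, contradicting synchronization. Getting the lexicographic bookkeeping right at the boundaries — in particular handling prefixes/suffixes and the convention that $u\prec v$ when $v$ is a proper prefix of $u$ — is where the care is needed; the linear ternary case in~\cite{DeLEdZam} is the template, but the cyclic setting with arbitrary alphabet size requires re-deriving the adjacency constraints from \Cref{t:1sqlet} rather than from an explicit word structure. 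Everything else (the reduction $\delta_b<0\rightsquigarrow$ reverse the alphabet order, and the passage from item~\ref{nodd} to the run count in item~\ref{runs}) I expect to be routine once the adjacency dichotomy is established.
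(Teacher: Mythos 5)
There are genuine gaps here, starting with an arithmetic error that undermines your setup for item~\ref{nodd}: $\delta_b>0$ only says $\sum_{c>b}n_c>\sum_{a<b}n_a$; it does \emph{not} imply $\sum_{a\leq b}n_a>N/2$ (that inequality is equivalent to $n_b>\delta_b$, which is not assumed in item~\ref{nodd} --- indeed one may have $|\omega|_b<\delta_b$). Beyond that, the heart of item~\ref{nodd} is never actually carried out: you describe two candidate strategies (cutting $\omega$ at the $dd'$ juncture to exhibit a non-synchronizing factorization, or an adjacency analysis via \Cref{t:1sqlet}), but both are left as sketches, and the second one as written relies on the false inequality above. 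The actual argument is much lighter than what you anticipate and requires no new factorization beyond what \Cref{t:1sqlet} already encodes: it is a pigeonhole count on successors and predecessors. If $|\omega|_b<\delta_b$, there are strictly more letters $>b$ than letters $\leq b$, so some letter $>b$ is followed by a letter $>b$, and then \Cref{t:1sqlet} rules out every $aa'$ with $a,a'\leq b$. If instead $\delta_b\leq|\omega|_b$, the same count (run in both directions) excludes $cc'$ with $c,c'>b$ and $aa'$ with $a,a'<b$; and if $ab$ occurred with $a<b$, then \Cref{t:1sqlet} would force every letter $>b$ to be followed by a letter $<b$, giving $\sum_{c>b}|\omega|_c\leq\sum_{a<b}|\omega|_a$ and contradicting $\delta_b>0$ (similarly for $ba$).

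For item~\ref{runs} there are two further gaps. First, the statement also asserts that no factor $cc'$ with $c,c'>b$ occurs; this does not follow from item~\ref{nodd} and you never prove it (it is exactly the $\delta_b\leq|\omega|_b$ branch of the counting argument above). Second, your run count is deferred to an unspecified ``minimality/rotation argument'' pinning the number of runs at $|\delta_b|$; no such minimality principle has been established anywhere, and none is needed. Once one knows that every letter $a<b$ is preceded by some $c>b$, every $b$ is preceded by $b$ or by some $c>b$, and every $c>b$ is followed by $b$ or by some $a<b$, summing the resulting identities gives $\delta_b=\sum_{c>b}|\omega|_{cb}=|\omega|_b-|\omega|_{bb}$, which is precisely the number of runs of $b$. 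As written, the proposal identifies the right tools but does not constitute a proof of either item.
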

\begin{proof}
    Without loss of generality, suppose $\delta_b>0$, the case $\delta_b<0$
    being completely symmetric. Assume $|\omega|_b<|\delta_b|$ first. This is
    equivalent to
    \[|\omega|_b+\sum_{a<b}|\omega|_a<\sum_{c>b}|\omega|_c\]
    so that $aa'\in L(\omega)$ for some $a,a'\leq b$ would imply that $\omega$
    also contains an occurrence of $cc'$ for some $c,c'>b$, against
    \Cref{t:1sqlet}.
    
    We may then assume $|\delta_b|\leq |\omega|_b$, that is,
    $\sum_{c>b}|\omega|_c\leq\sum_{a\leq b}|\omega|_a$. By the above
    argument, this shows $cc'\notin L(\omega)$ for letters $c,c'>b$. As
    $\delta_b>0$, we also have
    \[\sum_{a<b}|\omega|_a<\sum_{c>b}|\omega|_c\leq
    \sum_{c\geq b}|\omega|_c,\]
    which shows $aa'\notin L(\omega)$ for $a,a'<b$. Hence, in order to prove
    statement~\ref{nodd}, we only need to show $ab,ba\notin L(\omega)$ for
    letters $a<b$. By contradiction, suppose $ab\in L(\omega)$, so that
    $cb\notin L(\omega)$ for any letter $c>b$, by \Cref{t:1sqlet}. Then every
    occurrence in $\omega$ of a letter $c>b$ would be followed by some $a<b$,
    so that $\sum_{c>b}|\omega|_c\leq\sum_{a<b}|\omega|_a$, contradicting
    $\delta_b>0$. Similarly, one obtains $ba\notin L(\omega)$.

    To complete the proof of statement~\ref{runs}, let us observe that every
    occurrence in $\omega$ of a letter $a<b$ is preceded by some letter $c>b$;
    moreover, every occurrence of $b$ is followed by either $b$ or some $c>b$,
    and every occurrence of a $c>b$ is followed by either $b$ or some $a<b$.
    Summing up, we obtain:
    \[\begin{split}
        |\omega|_b &=|\omega|_{bb}+\sum_{c>b}|\omega|_{cb},\\
        \sum_{c>b}|\omega|_c &=\sum_{c>b}|\omega|_{cb}
        +\sum_{a<b<c}|\omega|_{ca},\\
        \sum_{a<b}|\omega|_a &=\sum_{a<b<c}|\omega|_{ca},
    \end{split}\]
    which yields $\delta_b=|\omega|_b-|\omega|_{bb}$. In other words, $\omega$
    has exactly $\delta_b$ runs of $b$, as desired.
\end{proof}

As an immediate consequence, we obtain
\begin{lemma}
    Let $\omega\in\Ac$ be a cyclic singular word, and $b\in\A$ be such that
    $\delta_b\neq 0$. Then $bb$ occurs in $\omega$ if and only if
    $|\omega|_b>|\delta_b|$.
\end{lemma}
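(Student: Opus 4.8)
The plan is to derive this equivalence directly from Lemma~\ref{t:runs}, without any further combinatorial analysis. The content of the statement is just the contrapositive/case split that Lemma~\ref{t:runs} already encodes, so the proof should amount to assembling the right pieces.

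First I would dispose of the case $|\omega|_b<|\delta_b|$. If $bb$ occurred in $\omega$, then taking $d=d'=b$ we would have a factor $dd'$ with $d,d'$ both equal to $b$; but by statement~\ref{nodd} of Lemma~\ref{t:runs} (applied with $d=d'=b$, noting $dd'=bb$ is explicitly excluded there only when $\delta_b>0$ and $d,d'\le b$, so I should instead invoke the argument inside the proof of Lemma~\ref{t:runs}) this is impossible once $|\omega|_b<|\delta_b|$. More cleanly: in the regime $|\omega|_b<|\delta_b|$, the proof of Lemma~\ref{t:runs} shows $\sum_{c>b}|\omega|_c>|\omega|_b+\sum_{a<b}|\omega|_a$, and an occurrence of $bb$ is a factor $dd'$ with $d,d'\le b$, which by \Cref{t:1sqlet} forces an occurrence of some $cc'$ with $c,c'>b$, and then again $\sum_{c>b}|\omega|_c\le\sum_{a\le b}|\omega|_a$, a contradiction. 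Hence $bb$ does not occur, establishing one direction: $|\omega|_b\le|\delta_b|$ and $\delta_b\neq0$ imply $bb\notin L(\omega)$, i.e.\ $bb$ occurring forces $|\omega|_b>|\delta_b|$.

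Next I would handle the regime $|\delta_b|\le|\omega|_b$, where Lemma~\ref{t:runs}\eqref{runs} applies and gives the exact formula $\delta_b=|\omega|_b-|\omega|_{bb}$ (up to sign, since $|\delta_b|$ and $|\omega|_b-|\omega|_{bb}$ have the same sign under $\delta_b>0$; the $\delta_b<0$ case is symmetric). From this identity, $|\omega|_b>|\delta_b|$ is equivalent to $|\omega|_{bb}>0$, i.e.\ to $bb$ occurring in $\omega$, and $|\omega|_b=|\delta_b|$ is equivalent to $|\omega|_{bb}=0$. Combining the two regimes: when $\delta_b\neq0$, $bb$ occurs in $\omega$ if and only if we are in the regime $|\delta_b|<|\omega|_b$, which is exactly the stated equivalence.

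I do not anticipate any real obstacle here; the only point requiring minor care is the sign bookkeeping between $\delta_b$ and $|\delta_b|$ (handled by the symmetric-case remark already used in Lemma~\ref{t:runs}), and the observation that ``$bb$ occurs'' is literally the statement $|\omega|_{bb}\ge 1$, which the run-count formula $\delta_b=|\omega|_b-|\omega|_{bb}$ controls. So the proof is essentially a two-line deduction from the displayed consequence of Lemma~\ref{t:runs}.
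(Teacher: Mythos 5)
Your proof is correct and is exactly the intended argument: the paper offers no separate proof (the lemma is announced as an ``immediate consequence'' of Lemma~\ref{t:runs}), and your two-regime assembly --- the counting/\Cref{t:1sqlet} argument ruling out $bb$ when $|\omega|_b<|\delta_b|$, and the identity $\delta_b=|\omega|_b-|\omega|_{bb}$ when $|\delta_b|\leq|\omega|_b$ --- is precisely what that phrase is pointing to. The only blemish is the sentence claiming \Cref{t:1sqlet} ``forces an occurrence of some $cc'$'': the inequality $\sum_{c>b}|\omega|_c>\sum_{a\leq b}|\omega|_a$ is what forces such a $cc'$ (by pigeonhole on the cyclic word), and \Cref{t:1sqlet} then forbids $bb$ and $cc'$ from coexisting --- the logic is right but stated in the wrong order.
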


For $b\in\A$ and $x\in\A^*$, we let $\xi_b(x)$ be the word obtained from $x$ by
adding an occurrence of $b$ to all existing runs of consecutive $b$, as well as
in the middle of any occurrence of factors $dd'$ where $d,d'\in\A$ and either
$b<\min\{d,d'\}$ or $b>\max\{d,d'\}$.

\begin{example}
Let $\A=\{a<b<c<d\}$ and $x=abbcacad$. Then
\[\begin{split}\xi_a(x) &=aababacaacaad,\quad
\xi_b(x)=abbbcacad,\quad \xi_c(x)=acbcbccaccad,\quad\text{and}\\
\xi_d(x)&=adbdbdcdadcdadd.\end{split}\]
\end{example}

\begin{lemma}
\label{t:mono}
    For all $b\in\A$ and $x,x'\in\A^+$ such that $|x|=|x'|$,
    \[x\prec x'\iff\xi_b(x)\prec\xi_b(x').\]
\end{lemma}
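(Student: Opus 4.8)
The map $\xi_b$ is defined purely locally: it inspects each position of $x$ together with its immediate neighbours and decides whether to insert a $b$ to the left (or right) of that position. Concretely, $\xi_b$ inserts a $b$ \emph{after} position $i$ of $x=x_1\cdots x_n$ exactly when either $x_i=x_{i+1}=b$, or $b<\min\{x_i,x_{i+1}\}$, or $b>\max\{x_i,x_{i+1}\}$; equivalently, the insertion happens precisely when $x_i$ and $x_{i+1}$ are \emph{not} strictly on opposite sides of $b$ (i.e. we do \emph{not} have $x_i<b<x_{i+1}$ or $x_i>b>x_{i+1}$, nor exactly one of them equal to $b$). The key structural fact is that whether or not a $b$ is inserted between positions $i$ and $i+1$ depends \emph{only} on the unordered pair $\{x_i,x_{i+1}\}$, and in fact it is a monotone condition: if we know $x_i$, then ``a $b$ is inserted to the right of $x_i$'' is determined by comparing $x_{i+1}$ against $b$ in a way compatible with the order. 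First I would make this precise by writing $\xi_b(x)$ as a sequence $y_1 z_1 y_2 z_2 \cdots y_n z_n$ (reading $x$ left to right) where $y_i\in\{b b, b\}$ records whether a $b$ was prepended to the $i$-th letter and $z_i\in\{b,\varepsilon\}$... — but it is cleaner to think of $\xi_b$ as: replace each letter $x_i$ by a block $B(x_{i-1},x_i,x_{i+1})$ over the alphabet $\A$, where the block is a word whose length depends only on the local context, and then observe the alignment structure below.

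\textbf{Main argument.} Suppose $x\prec x'$ with $|x|=|x'|=n$. If $x$ is a proper prefix of (the reverse-ordered) $x'$ in the convention of the paper, handle that degenerate case separately using the prefix convention (here $x\prec x'$ with equal lengths means $x=x'$ is impossible, so there is a genuine first difference). Let $i$ be the least index with $x_i\neq x_i'$, so $x_1\cdots x_{i-1}=x_1'\cdots x_{i-1}'=:w$ and $x_i<x_i'$ (using $\prec$ is the usual lex order on equal-length words). Now compare $\xi_b(x)$ and $\xi_b(x')$. On the common prefix $w$ the two images agree \emph{up to and including the block produced by position $i-1$ together with its right-insertion decision} — and here is the one subtle point: the $z_{i-1}$ insertion (a possible $b$ placed between position $i-1$ and position $i$) depends on $x_i$ versus $x_i'$, which differ. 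So the images of $x$ and $x'$ agree on the prefix corresponding to $w$ minus its last right-slot, and then we must compare the tails starting from the slot $z_{i-1}$. I would split into the three cases for $z_{i-1}$: (a) a $b$ is inserted in neither, (b) in both, (c) in exactly one. Cases (a),(b) are symmetric and reduce to comparing the blocks $B(\cdot,x_i,x_{i+1})$ versus $B(\cdot,x_i',x_{i+1}')$ preceded by a common (possibly empty) $b$-prefix; since $x_i<x_i'$, the block for $x_i$ starts with $\min$-stuff that is lexicographically smaller — one shows the first letter where the two blocks differ is $x_i$ vs.\ $x_i'$ (all inserted letters before that first genuine letter are $b$'s, and the \emph{number} of leading inserted $b$'s is the same in both because it is determined by $x_{i-1}$, which is common), giving $\xi_b(x)\prec\xi_b(x')$. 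Case (c): exactly one of $x_i,x_i'$ sits strictly on the opposite side of $b$ from $x_{i-1}$ — but since $x_i<x_i'$ and $x_{i-1}=x_{i-1}'$, if a $b$ is inserted in the $x$-image but not the $x'$-image this forces (by the monotone nature of the insertion rule) a contradiction, and in the remaining subcase the inserted $b$ in the $x'$-image only makes $\xi_b(x')$ larger at that slot, again yielding $\xi_b(x)\prec\xi_b(x')$. The converse ($\Leftarrow$) follows because $\xi_b$ is injective on each length class (it has a left inverse: delete the appropriate $b$'s) and $\prec$ is a total order, so $x\not\prec x'$ (i.e. $x=x'$ or $x'\prec x$) implies $\xi_b(x)=\xi_b(x')$ or $\xi_b(x')\prec\xi_b(x)$, contradicting $\xi_b(x)\prec\xi_b(x')$.

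\textbf{Main obstacle.} The delicate part is Case (c) and, more generally, verifying that the number and placement of the \emph{inserted} $b$'s before the first point of genuine disagreement is identical in $\xi_b(x)$ and $\xi_b(x')$ — this hinges on the insertion decision at slot $z_{i-1}$ being governed by a \emph{monotone} predicate in the right-neighbour letter. I would isolate this as a small preliminary observation: for fixed $c\in\A$, the set $\{d\in\A : \text{a } b \text{ is inserted between } c \text{ and } d\}$ is an ``interval complement'' of the order compatible with $\prec$ in the sense that as $d$ increases past $b$ the predicate can only switch in one controlled way, so that $x_i<x_i'$ never causes the $x$-image to gain a $b$ that the $x'$-image lacks \emph{in a position that would reverse the comparison}. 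Once that monotonicity lemma is in hand, the three-case analysis above is routine bookkeeping with the prefix convention for $\prec$, and I would not belabour it.
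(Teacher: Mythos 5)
Your overall strategy---locate the first position where $x$ and $x'$ differ and compare the images across the single insertion slot between the last common letter and the two differing letters---is the same as the paper's. But your proof rests on an incorrect local description of $\xi_b$, and the error sits exactly in the cases that make the lemma delicate. You assert that a $b$ is inserted between positions $i$ and $i+1$ iff $x_i=x_{i+1}=b$, or both letters are $>b$, or both are $<b$. The actual definition adds one $b$ to \emph{every maximal run} of $b$'s, including runs of length one: an isolated $b$ gets doubled, and a run of length $k$ gains one $b$, not $k-1$. Consequently your ``key structural fact'' (that the insertion decision depends only on the unordered pair $\{x_i,x_{i+1}\}$) is false for the paper's map, and worse, the lemma \emph{fails} for the map you describe. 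Concretely, over $\A=\{a<b<c<d\}$ take $x=acbd\prec accd=x'$. The true images are $\xi_b(x)=acbbd\prec acbcbd=\xi_b(x')$, as required; under your rule the images would be $acbd$ and $acbcbd$, which first differ at position $4$ with $c<d$, so your version of $\xi_b(x')$ would precede your version of $\xi_b(x)$---the wrong direction.

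The subcases your argument cannot reach are exactly those with $x_i=b$ or $x_i'=b$: there the doubling of the isolated $b$ is what supplies the extra $b$ needed to keep the two images aligned through the insertion slot. Relatedly, your claim in case (c) that ``if a $b$ is inserted in the $x$-image but not the $x'$-image this forces a contradiction'' is false: with $x_{i-1}<b$ and $x_i<b\leq x_i'$ this situation does occur and is harmless (the inserted $b$ is compared against $x_i'\geq b$, and when $x_i'=b$ one needs the run-doubling to conclude). Likewise the assertion that the number of inserted $b$'s preceding the first genuine letter of disagreement ``is determined by $x_{i-1}$, which is common'' is not true; it also depends on which side of $b$ the letters $x_i,x_i'$ lie on, which is the whole point of the case analysis. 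The paper's proof is a short case split on the last letter $r$ of the longest common prefix ($r=b$, $r<b$, $r>b$) against $d,d'$, using the correct definition; your write-up would go through along the same lines once the definition of $\xi_b$ is corrected and the $x_i=b$ or $x_i'=b$ subcases are treated explicitly. (Your derivation of the converse direction from totality of $\prec$ and the forward implication is fine.)
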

\begin{proof}
Let $z\in\A^*$ and $d,d'\in\A$ be such that $x$ begins with $zd$ and $x'$
begins with $zd'$. If $z=\eps$, or if $z$ ends with $b$, then the assertion is
verified since $\xi_b(x)$ and $\xi_b(x')$ begin with $\xi_b(z)d$ and
$\xi_b(z)d'$, respectively. So let $r\neq b$ be the last letter of $z$.

Suppose $r<b$. If $b\leq d,d'$, again $\xi_b(x)$ and $\xi_b(x')$ begin with
$\xi_b(z)d$ and $\xi_b(z)d'$, so we are done. If $d<b$ (resp.~$d'<b$), then
$\xi_b(x)$ begins with $\xi_b(z)bd$ (resp.~$\xi_b(x')$ begins with
$\xi_b(z)bd'$). Hence in all cases we have $d<d'$ if and only if
$\xi_b(x)\prec\xi_b(x')$. The case $r>b$ is similar.
\end{proof}

In the following, we shall often use the simple fact that a word $X\in\A^+$ is
$\xi_b(x)$ for some $x$ (which is then uniquely determined by erasing one
letter from each run of consecutive $b$ in $X$) if and only if the following
conditions are all satisfied:
\begin{enumerate}
	\item $X$ does not contain any factor $dd'$ where the letters $d,d'$ are
	both larger than $b$ or both smaller than $b$;
	\item $X$ does not contain any factor $abc$ or $cba$, with $a,c\in\A$
	such that $a<b<c$;
	\item\label{notb} $X\neq b$, and $X$ does not begin with $bd$ or end with $db$, where
	$d\in\A\setminus\{b\}$.
\end{enumerate}

Let us observe that the definition of $\xi_b$ naturally applies to cyclic words
just as well. We still write $\xi_b$ for the resulting map on $\Ac$. Note,
however, that if $x\in\A^+$ represents $\omega\in\Ac$, and $d,d'$ are its first
and last letters, then $\xi_b(\omega)$ is represented by one of the following:
\[\begin{cases}
\xi_b(x)b &\text{ if }d,d'<b\,\text{ or }d,d'>b,\\
\xi_b(x)b^{-1} &\text{ if }d=d'=b,\\
\xi_b(x) &\text{ otherwise.}
\end{cases}\]
The above characterization of words in the image of $\xi_b$ also works in the
cyclic case, but with a simplified condition~\ref{notb}; one only needs to
check that the whole cyclic word is not $b$ (the remaining part would not make
sense for a cyclic word, after all).

We remark that on cyclic singular words, the map $\xi_b$ acts like the morphisms of
the free group over $\A$ considered by
M.~Lapointe~\cite[Chapter~IV]{Lap} to construct perfectly clustering words for the
Burrows–Wheeler transform.
\begin{lemma}
	\label{t:xising}
    Let $b\in\A$ and \(\omega\in\Ac\) be such that $\delta_b(\omega)\neq 0$. Then
    \(\omega\) is singular if and only if so is \(\xi_b(\omega)\).
\end{lemma}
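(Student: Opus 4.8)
The statement is a biconditional, so I would prove the two implications separately, and in each direction the natural strategy is contrapositive: produce a non-synchronizing factorization of one cyclic word from a non-synchronizing factorization of the other. The key technical tool is the monotonicity of $\xi_b$ with respect to $\prec$ on words of equal length established in Lemma~\ref{t:mono}, together with the explicit description of which words lie in the image of $\xi_b$ (the three conditions listed after Lemma~\ref{t:mono}, in their cyclic form). Since $\delta_b(\omega)\neq 0$, Lemma~\ref{t:runs} applies whenever $\omega$ is singular (and I would note that one does not need $\omega$ singular to run the purely combinatorial part of the argument in one direction), guaranteeing in particular that $\omega$ contains no factor $dd'$ with $d,d'$ both $>b$ or both $<b$ other than possibly $bb$, which is exactly the structural hypothesis needed to identify $\omega$ as $\xi_b$-preimage of a cyclic word, or to control factorizations.

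\textbf{The reversal-invariance reduction.} First I would record that $\xi_b$ commutes with reversal, i.e.\ $\xi_b(\omega^*) = \xi_b(\omega)^*$, which is immediate from the definition of $\xi_b$ since inserting $b$'s into runs and into ascending/descending factors $dd'$ is a symmetric operation. This will let me pass freely between a factorization and its reverse. Next, the crucial observation is that a factorization $\omega = UV$ of the cyclic word with $U\neq U^*$, $V\neq V^*$ corresponds, after applying $\xi_b$, to a factorization $\xi_b(\omega) = \xi_b(U)\,\xi_b(V)$ up to editing the $b$'s straddling the cut point: if the cut falls inside a run of $b$'s or between two letters of an ascending/descending pair, the factorization of $\xi_b(\omega)$ is obtained from $\xi_b(U)\xi_b(V)$ by inserting or removing a single $b$ at the boundary. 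In all these cases $\xi_b(U)$ and $\xi_b(V)$ (or their one-letter modifications $\xi_b(U)b^{\pm1}$, etc.) are again non-palindromes precisely when $U,V$ are, and by Lemma~\ref{t:mono} the inequality between $\xi_b(U)$ and its reversal matches the inequality between $U$ and $U^*$ — here I would be careful that Lemma~\ref{t:mono} is stated for equal-length words, and $|\xi_b(U)| \neq |\xi_b(U^*)|$ can fail only through boundary $b$'s, which I handle by the same case analysis. Conversely, every non-palindromic factorization of $\xi_b(\omega)$ cuts $\xi_b(\omega)$ either at a position that is the $\xi_b$-image of a cut in $\omega$, or strictly inside an inserted block of $b$'s / inside an ascending-descending pair; in the latter cases one checks directly that the two factors are synchronizing (using that their extremal letters are $b$ together with a $<b$ or $>b$ letter, and $\xi_b$-images contain no forbidden squares), so such cuts never witness non-singularity.

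\textbf{Assembling the two implications.} For the forward direction, assume $\omega$ is singular and suppose toward a contradiction that $\xi_b(\omega) = XY$ is a non-synchronizing factorization with $X,Y$ non-palindromes. By the dichotomy above, either this cut descends to a cut $\omega = UV$, in which case $U,V$ are non-palindromes and $U\prec U^* \iff X\prec X^*$, $V\prec V^* \iff Y\prec Y^*$ by Lemma~\ref{t:mono} (modulo the boundary-$b$ bookkeeping), so $\omega=UV$ is non-synchronizing, contradicting singularity of $\omega$; or the cut is internal to a $b$-block or an ascending/descending pair, which we have seen is always synchronizing, again a contradiction. For the converse, assume $\xi_b(\omega)$ is singular and let $\omega = UV$ be any non-palindromic factorization; then $\xi_b(\omega) = \xi_b(U)\xi_b(V)$ up to a boundary $b$, the factors are non-palindromes, and by Lemma~\ref{t:mono} the factorization of $\xi_b(\omega)$ is synchronizing iff $\omega=UV$ is, so $\omega=UV$ is synchronizing. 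Hence $\omega$ is singular. \textbf{The main obstacle} I expect is the boundary bookkeeping: keeping precise track of how an inserted or deleted $b$ at the cut point affects lengths and the $\prec$ versus $\prec$-of-the-reverse comparison, since Lemma~\ref{t:mono} only speaks about equal-length words, and handling the special cyclic representatives $\xi_b(x)b$, $\xi_b(x)b^{-1}$, $\xi_b(x)$ from the case split before the lemma. The rest is a routine, if slightly tedious, case analysis driven by Lemmas~\ref{t:1sqlet}, \ref{t:runs}, and~\ref{t:mono}.
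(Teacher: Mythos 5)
Your overall strategy coincides with the paper's: both directions are handled by matching factorizations of $\omega$ with factorizations of $\xi_b(\omega)$ (the factors differing from $\xi_b(u)$ and $\xi_b(v)$ only by occurrences of $b$ at the cut), transferring the order comparisons through Lemma~\ref{t:mono}, and using Lemma~\ref{t:runs} together with the characterization of the image of $\xi_b$ to exclude forbidden adjacencies. So the plan is the right plan. However, two points in your outline are genuine gaps rather than deferrable bookkeeping. First, the claim that a cut of $\xi_b(\omega)$ falling beside an inserted $b$ ``is always synchronizing'' by direct inspection of the letters at the cut is not correct: if, say, $U$ ends with an inserted $b$ and $V$ begins with some $d'>b$, the comparisons of $U$ with $U^*$ and of $V$ with $V^*$ are governed by the \emph{opposite} ends $U_1$ and $V_k$, which are not constrained by the insertion; in the paper every such cut is still pulled back to a cut $\omega=uv$ (yielding $\xi_b(u)b\cdot\xi_b(v)$, $b^{-1}\xi_b(u)\cdot\xi_b(v)$, $b\xi_b(u)b\cdot\xi_b(v)$, etc.) and the singularity hypothesis is invoked there. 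Second, your dichotomy ``descends to a cut of $\omega$ / internal and automatically fine'' misses the configurations that must be ruled out as \emph{impossible}: for instance, in the direction where $\xi_b(\omega)$ is assumed singular, the configurations $u_1=v_k=b,\ v_1>b$ and $u_1>b,\ v_1>b,\ v_k\leq b$ cannot occur at all, because the corresponding factorization of $\xi_b(\omega)$ would be non-synchronizing; these are not cases where Lemma~\ref{t:mono} transfers an inequality but cases where the cut itself is shown not to exist.

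A smaller omission: before the first/last-letter case analysis can start you need the reduction the paper makes at the outset, namely that one may assume $u$ begins and ends with \emph{distinct} letters, by peeling off a palindromic wrapper ($\omega=(zu'z^*)\cdot v$ is synchronizing if and only if $\omega=u'\cdot z^*vz$ is). Without this, the sign of $u$ versus $u^*$ is not read off the boundary letters, and the case split on $u_1,u_h,v_1,v_k$ relative to $b$ does not determine anything. None of this changes the verdict that your method is the paper's method; it does mean that the proposal as written would not close without reinstating essentially the full case analysis you set aside as routine, and without correcting the ``internal cuts are automatically synchronizing'' shortcut.
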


\begin{proof}
    By definition, $\delta_b(\omega)=\delta_b(\xi_b(\omega))$, so that there
    is no ambiguity in writing $\delta_b$.
    % If $\delta_b=0$, then\dots\ 
    Without loss of generality, let us then assume $\delta_b>0$.
    
    Suppose $\xi_b(\omega)$ is singular first. We need to show that every
    factorization $\omega=uv$ with $u,v\in\A^+$ is synchronizing. We may
    assume that $u$ begins and ends with distinct letters, since when
    $u=zu'\rev z$ for some $u',z\in\A^*$, the factorization $\omega=u\cdot v$
    is synchronizing if and only if so is $\omega=u'\cdot\rev zvz$. By
    symmetry, we may also assume $u\prec\rev u$. Let then $u=u_1\dotsm u_h$ and
    $v=v_1\dotsm v_k$, for $u_1,\dots,u_h,v_1,\dots,v_k\in\A$ such that
    $u_1<u_h$, and let us show that $v\preceq\rev v$ in all cases.
    
    In the following analysis, we leave out the cases where $v_1<b$ and
    $v_k\geq b$, or $v_1=b$ and $v_k>b$, since $v\preceq\rev v$ is then clear. On
    the contrary, we may rule out all cases where $v_1\geq b$ and $v_k<b$ (or
    $v_1>b$ and $v_k=b$) by finding a corresponding non-synchronizing
    factorization in $\xi_b(\omega)$, which is absurd.
    Since $\delta_b>0$, by \Cref{t:runs}, $\xi_b(\omega)$ contains no factor $aa'$
    where $a,a'\in\A,\; a,a'\leq b,\,$ and $aa'\neq bb$; by definition of $\xi_b$,
    the same must be true for $\omega$, and so we may also exclude all cases
    where $u_hv_1$ or $v_ku_1$ have such form.
    %% TEST
    %% REWRITE?
%    In particular, these are (remember $u_1<u_h$):
%    \begin{itemize}
%        \item $u_h\leq b,\quad v_1\geq b,\;$ and $v_k<b,\;$ with
%        $u_hv_1\neq bb$.
%        Impossible, since $\xi_b(\omega)=\xi_b(u)\cdot\xi_b(v)b$ in this case.
%        would not be synchronizing, as $\xi_b(v)b$ ends in $v_kb$ and begins
%        with either $bb$ or $v_1>b$.
%        \item $u_h=v_1=b\;$ and $v_k<b$.
%        Impossible since $\xi_b(\omega)=b\xi_b(u)\cdot b^{-1}\xi_b(v)$.
%        \item $u_1<b,\quad u_h,v_1>b,\;$ and $v_k<b$.
%        Impossible since $\xi_b(\omega)=b\xi_b(u)\cdot b\xi_b(v)$.
%        \item $u_1=v_k=b,\;$ and $v_1>b$.
%        Impossible since $\xi_b(\omega)=b^{-1}\xi_b(u)\cdot b\xi_b(v)$.
%        \item $u_1\geq b,\quad v_1>b,\;$ and $v_k\leq b,\;$ with
%        $u_1v_k\neq bb$.
%        Impossible since $\xi_b(\omega)=\xi_b(u)\cdot b\xi_b(v)$.
%    \end{itemize}
%    
    In the remaining cases, we obtain $v\preceq\rev v$ by \Cref{t:mono}, as some
    (necessarily synchronizing) factorization of $\xi_b(\omega)$ gives
    $\xi_b(v)\preceq\xi_b(\rev v)$. A detailed account of all possibilities
    follows; remember that $u_1<u_h$ is assumed.
    \begin{itemize}
        \item If
%%        $u_h,v_1,v_k<b,\;$ or
        $u_1,v_1,v_k>b$,
        then $\xi_b(\omega)=b\xi_b(u)b\cdot\xi_b(v)$ is synchronizing, and
        clearly $b\xi_b(u)b\prec\rev{b\xi_b(u)b}=b\xi_b(\rev u)b$, so that
        $\xi_b(v)\preceq\xi_b(\rev v)$.
%%        \item Next, suppose $u_1< b,\; v_1\geq b,\;$ and $v_k<b,\,$ with
%%        $u_hv_1\neq bb$ and $u_h,v_1$ not both larger than $b$. This is
%%        impossible, since $\xi_b(\omega)=\xi_b(u)\cdot\xi_b(v)b$ in this case
%%        would not be synchronizing, as $\xi_b(v)b$ ends in $v_kb$ and begins
%%        with either $bb$ or $v_1>b$.
        \item\label{aacc} If $u_h\leq b\;$ and
%%        $v_1,v_k\geq b,\,$ with $u_hv_1\neq bb,\,$
        $v_1,v_k>b$, then $\xi_b(\omega)=\xi_b(u)\cdot\xi_b(v)$ with
        $\xi_b(u)\prec\xi_b(\rev u)$. Therefore $\xi_b(v)\preceq\xi_b(\rev v)$.
        The same happens
%%        for $u_1<b,\; u_h>b,\,$ and $v_1=v_k=b$, as well as
        for
%%        $u_1\geq b\,$
        $u_1>b\,$ and $v_1,v_k\leq b$.
%%        with $u_1v_k\neq bb$.
%%        \item If $u_1<b,\; u_h\geq b,\; v_1\leq b,\,$ and $v_k<b,\,$ with
%%        $u_hv_1\neq bb$,       
%%        then $\xi_b(\omega)=b\xi_b(u)\cdot\xi_b(v)$, with $b\xi_b(u)$
%%		  beginning
%%        with $bu_1$ and ending in either $bb$ or $u_h>b$, whence
%%        $\xi_b(v)\leq\xi_b(\rev v)$.
%%        \item Now suppose $u_h=v_1=b\,$ and $v_k<b$. This is
%%        impossible as $\xi_b(\omega)=b\xi_b(u)\cdot b^{-1}\xi_b(v)$ would not
%%        be synchronizing.
%%        \item If $u_h=v_1=v_k=b$, then
%%        $\xi_b(\omega)=\xi_b(u)b^{-1}\cdot\xi_b(v)$ with
%%        $\xi_b(u)b^{-1}<b^{-1}\xi_b(\rev u)$, so that
%%        $\xi_b(v)\leq\xi_b(\rev v)$.
%%        \item Suppose $u_1<b,\; u_h,v_1>b,\,$ and $v_k<b$. This is impossible
%%        as $\xi_b(\omega)=b\xi_b(u)\cdot b\xi_b(v)$ would not be
%%        synchronizing.
        \item If $u_1\leq b,\; u_h,v_1>b,\,$ and $v_k>b,\,$
%%        $v_k\geq b,\,$ with $u_1v_k\neq bb,\,$        
        then $\xi_b(\omega)=\xi_b(u)b\cdot\xi_b(v)$ with
        $\xi_b(u)b\prec b\xi_b(\rev u)$, so that $\xi_b(v)\preceq\xi_b(\rev v)$.
        \item If $u_1=v_1=v_k=b$,        
        then $\xi_b(\omega)=b^{-1}\xi_b(u)\cdot\xi_b(v)$ with
        $b^{-1}\xi_b(u)\prec\xi_b(\rev u)b^{-1}$, whence
        $\xi_b(v)\preceq\xi_b(\rev v)$.
        \item Assume $u_1=v_k=b,\,$ and $v_1>b$. This is
        impossible since $\xi_b(\omega)=b^{-1}\xi_b(u)\cdot b\xi_b(v)$ would
        not be synchronizing.        
        \item Finally, consider the case where
%%        $u_1\geq b,\; v_1>b,\,$ and $v_k\leq b\,$ with $u_1v_k\neq bb$.
		$u_1>b,\; v_1>b,\,$ and $v_k\leq b$.
        This is impossible too, as
        $\xi_b(\omega)=\xi_b(u)\cdot b\xi_b(v)$ would not be synchronizing.
        %??? $\xi_b(\omega)=\xi_b(u)b\cdot\xi_b(v)$ is synchronizing.
    \end{itemize}
	It is easy to check that all possibilities for $u_1,u_h,v_1$, and $v_k$ are
	covered; we may conclude that $\xi_b(\omega)$ being singular implies that
	$\omega$ is	singular.

    Conversely, suppose $\omega$ is singular. We need to prove that every
    factorization $\xi_b(\omega)=UV$ is synchronizing. As above, we may assume 
    $U=U_1\cdots U_h$ and $V=V_1\cdots V_k$ for some $h,k\geq 2$,
    $U_1,\ldots, U_h,V_1,\ldots, V_k\in\A$, and $U_1<U_h$. In the following
    case analysis, we skip all cases where $V_1<b$ and $V_k\geq b$ (or where
    $V_1\leq b$ and $V_k>b$), since that gives $V\preceq\rev V$ immediately. By
    \Cref{t:runs}, $\omega$ has no factor $aa'$ with
    $aa'\in\A,\; a,a'\leq b,\,$ and $aa'\neq bb$; by definition of $\xi_b$, the
    same is true for $\xi_b(\omega)$, so we also exclude all cases where
    $U_hV_1$ or $V_kU_1$ have such form.

    Let $U_1<b$ first, so that we may assume $V_k>b$.
%%    $V_k\geq b$ since $V_kU_1$ is contained in $\xi_b(\omega)$.
    \begin{itemize}
    	\item If $U_h<b$, then
%%		$V_1\geq b$
		$V_1>b$ for the same reason, and there exist
		words $u,v\in\A^*$ such that $\omega=uv$ and $U=\xi_b(u)$; by
		\Cref{t:mono}, this implies $u\prec\rev u$ and then $v\preceq\rev v$ as
		$\omega$ is singular.
%%		If $V_1$ and $V_k$ are both larger than $b$, then
		Since $V=\xi_b(v)$, we obtain $V\preceq\rev V$ by \Cref{t:mono}.
%%		The case
%%		$V_1>b,\,V_k=b$ cannot occur, since it would imply either $V=\xi_b(v)$
%%		or $V=\xi_b(v)b$ depending on whether $V_{k-1}$ is $b$ or smaller (it
%%		cannot be larger by definition of $\xi_b$, since $V_{k-1}V_kU_1$ is
%%		contained in $\xi_b(\omega)$), and in both cases $v$ would begin with
%%		$V_1>b$ and end in $V_{k-1}\leq b$. Thus we may suppose $V_1=V_k=b$.
%%
%%		Now, $V_2$ and $V_{k-1}$ may either be $b$ or
%%		smaller, again by definition of $\xi_b$. If neither is
%%		$b$, then $V=b\xi_b(v)b$, so that $V\leq\rev v$ by \Cref{t:mono}. If
%%		$V_2<b$ and $V_{k-1}=b$, then $V\leq\rev V$ trivially. The opposite
%%		case $V_2=b,\, V_{k-1}<b$ cannot occur, since it would imply
%%		$V=\xi_b(v)b$ with $v$ beginning with $b$ and ending with $V_{k-1}<b$.
%%		Finally, if $V_2=V_{k-1}=b$, then $V=\xi_b(v)$ and $V\leq\rev V$ by
%%		\Cref{t:mono}.
		%
		\item If $U_h=b$, we may assume $V_1\geq b$, and then $V_1>b$
		since otherwise $V\preceq\rev V$ is clear.
%%		Suppose $V_1>b$ first, so
		Therefore $U_{h-1}\geq b,\,$ as
		$U_{h-1}U_hV_1$ occurs in $\xi_b(\omega)$; we have either $U=\xi_b(u)$
		if $U_{h-1}=b$, or $U=\xi_b(u)b$ otherwise, with $\omega=uv$ for some
		$u,v\in\A^+$. In both cases, we obtain $u\prec\rev u$, so that
		$v\preceq\rev v$.
%%		Now if
		As $V_k>b$, we deduce $V=\xi_b(v)$, so that $V\preceq\rev V$ by
		\Cref{t:mono}.
%%		If we had $V_k=b$ instead, then $V_{k-1}\leq b$ would
%%		follow, and either $V=\xi_b(v)$ or $V=\xi_b(v)b$ depending on whether
%%		$V_{k-1}=b$ or not. Both options would have $v$ begin with $V_1>b$ and
%%		end with $V_{k-1}\leq b$, a contradiction.
%%		
%%		If $V_1=V_k=b$, there exist $u,v$ such
%%		that $\omega=uv,\;Ub=\xi_b(u),\,$ and $u<\rev u$, whence $v\leq\rev v$
%%		as $\omega$ is singular. As at the end of the $U_h<b$ case, there are
%%		some sub-cases depending on the values of $V_2$ and $V_{k-1}$. Observe
%%		that $V_{k-1}\leq b$, as $V_{k-1}V_kU_1$ appears in $\xi_b(\omega)$,
%%		whereas there is no restriction on $V_2$.
%%%			\begin{itemize}
%%%			\item If $V_2\neq b$ and $V_{k-1}<b$, then $V=b\xi_b(v)b$, so
%%%			that $V\leq\rev V$ by \Cref{t:mono}.
%%		If $V_2\neq b$ and $V_{k-1}=b$, then $V=b\xi_b(v)$ with
%%		$v$ beginning with $V_2$ and ending with $b$, so that $V_2<b$ and
%%		$V<\rev V$.
%%%			\item If $V_2=b\,$ and $V_{k-1}<b$, we would have $V=\xi_b(v)b$ and
%%%			$v$ would begin with $b$ and end with $V_{k-1}<b$, which is absurd
%%%			as $\omega=uv$ is singular.
%%%			\item If $V_2=V_{k-1}=b$, then $V=\xi_b(v)$ so that	$V\leq\rev V$.
%%%			\end{itemize}
%%		The other sub-cases work exactly as above.
		%
		\item If $U_h>b$, then $V_1\leq b$ since $U_hV_1$ occurs in
		$\xi_b(\omega)$. Hence $V\preceq\rev V$.
%%		, and so the only case we need to
%%		check is $V_1=V_k=b$. We have $\omega=uv$ and $U=\xi_b(u)$ for suitable
%%		$u,v\in\A^+$ such that $u<\rev u$ and hence $v\leq\rev v$. The possible
%%		values for $V$ are once again determined by $V_2$ and $V_{k-1}$, but
%%		we have $V_2\geq b$ and $V_{k-1}\leq b$ this time. As before,
%%		$V_2=V_{k-1}=b$ implies $V=\xi_b(v)$ and hence $V\leq\rev V$;
%%		the other sub-cases, where $V_2>V_{k-1}$, turn out to be impossible
%%		since $v$ always begins with $V_1$ and ends with $V_{k-1}$.
    \end{itemize}
    
    Now let $U_1\geq b$, whence $U_h>b$ and $V_1\leq b$. Suppose $U_1=b$
    first, so that we may assume $V_k\geq b$ and so the only case left to check
    is $V_1=V_k=b$.
%%		\begin{itemize}
%%		\item If $V_k<b$, then $U_2\leq b$; it follows that $U$ may be written
%%		as $\xi_b(u)$ or $b\xi_b(u)$ for some $u<\rev u$, and $\omega=uv$ then
%%		implies $v\leq\rev v$. In case $V_1<b$, we obtain $V=\xi_b(v)$ and
%%		hence $V\leq\rev V$; if we had $V_1=b$ instead, either $V=\xi_b(v)$ or
%%		$V=b\xi_b(v)$ would follow, with $v$ beginning with $V_2\geq b$ and
%%		ending with $V_k<b$, which is absurd.
%%		\item If $V_k=V_1=b$, then
        Thus we have $V_2\geq b$, and there exist $u,v$ such that
        $\omega=uv$ and $bU=\xi_b(u)$, with $u\prec\rev u$ and hence $v\preceq\rev v$.
        If $V_2=b$, then $V=\xi_b(v)$ or $V=\xi_b(v)b$, with $v$ beginning with
        $b$ and ending with $V_{k-1}$; thus $V_{k-1}\geq b$ and $V\preceq\rev V$.
        If $V_2>b$, we have either $V=b\xi_b(v)$ or $V=b\xi_b(v)b$, again with
        the same endings for $v$; therefore $V_{k-1}>b$ and $V\preceq\rev V$ by
        \Cref{t:mono}.
%%		\end{itemize}
        
	Finally, let $U_1>b$, so that $V_k\leq b$. We have $U=\xi_b(u)$ and
	$\omega=uv$ for suitable $u,v$, with $u\prec\rev u$ and hence $v\preceq\rev v$.
	If $V_1,V_k<b$, then $V=\xi_b(v)$, so that $V\preceq\rev V$. Let then $V_1=b$,
	so that $V_2\geq b$. Then $V_k<b$ would imply $V=\xi_b(v)$ or $V=b\xi_b(v)$
	with $b$ beginning with $V_2\geq b$ and ending with $V_k<b$, a
	contradiction. Hence we may assume $V_k=V_1=b$. The same sub-case analysis
	for $V_2$ as in the case $U_1=b$ leads to the conclusion
	$V\preceq\rev V$.
\end{proof}

Let us observe that if $\xi_b(\omega)\in\Ac$ is
singular with Parikh vector $\pv$ and $\delta_{b}\neq 0$, then
\Cref{t:xising,t:runs} imply that $\omega$ is singular and has Parikh vector
$\pv-|\delta_b|\mathbf e_b$, where $\mathbf e_b$ is the unit vector indexed by
$b$ (that is, the Parikh vector of $b$).
Conversely, if $\omega$ is singular with $\delta_{b}\neq 0$, then applying
$\xi_{b}$ adds exactly $|\delta_{b}|$ occurrences of $b$ by
definition of $\xi_{b}$ and by \Cref{t:runs}, since the
resulting word $\xi_{b}(\omega)$ is singular by \Cref{t:xising}.
This suggests a partial recursive algorithm for constructing cyclic singular
words with a given Parikh vector $\pv=\pv^{(0)}=(n_d)_{d\in\A}$.
    \begin{alg}
    \label{a:non0}
    Input: a vector $\pv\in\nats^\A$.
        \begin{enumerate}
        \item Find the least $b\in\A$ such that $n_b\geq |\delta_b|$.
        % where $\delta_j=\delta_j(\pv)$ is defined as in \cref{e:deltaj}.
        \item If $\delta_b\neq 0$, restart from the vector
        $\pv^{(1)}=\pv-|\delta_b|\mathbf e_b$.
        \item Repeat the previous steps until reaching a vector $\pv^{(m)}$
        such that $\delta_{b_m}(\pv^{(m)})=0$ for the letter
        $b_m\in\A$ found at step 1.
        \item If $\pv^{(m)}$ is not a multiple of $\mathbf e_{b_m}$, exit
        (failure); otherwise let $\omega^{(m)}$ be the only cyclic word having
        Parikh vector $\pv^{(m)}$, i.e., $\omega^{(m)}=b_m^k$ with
        $k=|\omega^{(m)}|$.
        \item Go back through previous vectors using \Cref{t:xising} to obtain
        new words, that is: for $0\leq i<m$, if
        $\pv^{(i+1)}=\pv^{(i)}-|\delta_{b_i}|\mathbf e_{b_i}$,
        then set $\omega^{(b)}=\xi_{b_i}(\omega^{(i+1)})$.
    	\end{enumerate}
	\end{alg}
	
	\begin{theorem}
	\label{t:uniq}
	With the above notation and definitions, if \Cref{a:non0} succeeds then its
	output $\omega=\omega^{(0)}$ is the \emph{unique} cyclic singular word with
	Parikh vector $\pv$.
	\end{theorem}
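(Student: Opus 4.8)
The plan is to prove both halves of the statement at once — that the output $\omega^{(0)}$ is a cyclic singular word with Parikh vector $\pv$, and that it is the only one — by strong induction on $|\pv|=\sum_{d\in\A}n_d$. Throughout, write $b=b_0$ for the letter selected at step~1, the least $b\in\A$ with $n_b\ge|\delta_b|$; it exists by \Cref{t:mid}. We may assume $\delta_b>0$: the case $\delta_b<0$ is handled by the identical argument with the order on $\A$ reversed, since \Cref{t:runs}, \Cref{t:xising}, and the characterisation of the image of $\xi_b$ are all invariant under this reversal. If $\delta_b=0$ then $m=0$, and because the run succeeds we have $\pv=k\mathbf e_b$ with $k=|\pv|\ge 1$, so $\omega^{(0)}=b^k$; this word has Parikh vector $\pv$, is singular (it has no factorisation into two non-palindromes), and is plainly the unique cyclic word, hence the unique cyclic singular word, with Parikh vector $k\mathbf e_b$. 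This is the base case.

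Assume now $\delta_b\neq 0$, so $m\ge 1$ and $\pv^{(1)}=\pv-|\delta_b|\mathbf e_b$ is a non-zero vector (were it zero, $\pv=|\delta_b|\mathbf e_b$ would force $\delta_b=0$) with $|\pv^{(1)}|<|\pv|$. Since step~2 restarts the algorithm from $\pv^{(1)}$, the run on $\pv^{(1)}$ is a sub-computation of the run on $\pv$; in particular it succeeds and outputs $\omega^{(1)}$, and $\omega^{(0)}=\xi_b(\omega^{(1)})$. By the induction hypothesis, $\omega^{(1)}$ is the unique cyclic singular word with Parikh vector $\pv^{(1)}$; in particular it is singular. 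As $\delta_b$ does not depend on the $b$-coordinate, $\delta_b(\omega^{(1)})=\delta_b\neq 0$, so \Cref{t:xising} shows $\omega^{(0)}$ is singular, and by the count established just before \Cref{a:non0} (an application of \Cref{t:runs}) the map $\xi_b$ adds exactly $|\delta_b|$ occurrences of $b$, so $\omega^{(0)}$ has Parikh vector $\pv^{(1)}+|\delta_b|\mathbf e_b=\pv$. This proves existence.

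For uniqueness, let $\omega$ be any cyclic singular word with Parikh vector $\pv$. Since $b$ was chosen with $n_b\ge|\delta_b|$, we have $0<|\delta_b|\le|\omega|_b$, so \Cref{t:runs} applies to $\omega$: it has exactly $|\delta_b|$ runs of $b$, no factor $dd'$ with $d,d'$ both larger than $b$ or both smaller than $b$, and — from its first item, using $\delta_b>0$ — no factor $ab$ or $ba$ with $a<b$, hence no factor $abc$ or $cba$ with $a<b<c$; moreover $\omega$ is not a power of $b$, since $\delta_b\neq 0$ excludes $\pv=k\mathbf e_b$. These are exactly the conditions characterising cyclic words in the image of $\xi_b$, so $\omega=\xi_b(\omega')$, where $\omega'$ is obtained by deleting one $b$ from each run of consecutive $b$'s in $\omega$. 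Then $\omega'$ is non-empty (as $\omega$ is not a power of $b$) with Parikh vector $\pv-|\delta_b|\mathbf e_b=\pv^{(1)}$ (counting $b$'s via the $|\delta_b|$ runs of $\omega$), and since $\delta_b(\omega')=\delta_b\neq 0$ it is singular by \Cref{t:xising}. By the induction hypothesis $\omega'=\omega^{(1)}$, whence $\omega=\xi_b(\omega^{(1)})=\omega^{(0)}$, which completes the induction.

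The inductive skeleton is routine; the substance lies in the two ``bridge'' facts used in the step $\delta_b\neq 0$: that every cyclic singular word with Parikh vector $\pv$ lies in the image of $\xi_b$ for the letter $b$ chosen at step~1, and that applying $\xi_b$ then changes the Parikh vector by precisely $|\delta_b|\mathbf e_b$. Both rest on the inequality $0<|\delta_b|\le|\omega|_b$, which is exactly what makes the second item of \Cref{t:runs} available and which holds precisely because $b$ is chosen as the \emph{least} letter with $n_b\ge|\delta_b|$ (so that \Cref{t:mid} pins down its behaviour). Verifying that the three defining conditions for the image of $\xi_b$ — no monotone pair $dd'$, no $abc$ or $cba$ with $a<b<c$, and not being the single-letter word $b$ — all follow, is the crux; the bookkeeping with reversals and the sign case $\delta_b<0$ is mechanical.
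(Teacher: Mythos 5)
Your proof is correct and follows essentially the same route as the paper's: existence by pulling back to the constant word via \Cref{t:xising}, and uniqueness by induction, using \Cref{t:runs} to show that any singular word with Parikh vector $\pv$ lies in the image of $\xi_b$ and that its preimage is singular with Parikh vector $\pv^{(1)}$. Your write-up is somewhat more explicit than the paper's (e.g.\ in noting that the preimage is singular via \Cref{t:xising} and in handling the sign of $\delta_b$), but the substance is identical.
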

	\begin{proof}
	Suppose the algorithm succeeds for $\pv$. Then the word $\omega$ is
	obtained from a constant cyclic word $\omega^{(m)}$ (which is trivially
	singular and unique) by applying $m$ many $\xi$ operations, so that
	$\omega$ is singular by \Cref{t:xising}.
	
	Now let $\widehat\omega\in\Ac$ be any cyclic singular word with Parikh
	vector $\pv$. By hypothesis, if $b$ is the letter found at step 1 for
	$\pv$, then $|\delta_b(\widehat\omega)|=|\delta_b(\pv)|>0$. As a
	consequence of \Cref{t:runs}, $\widehat\omega$ contains no factor $dd'$
	with $d,d'\in\A$ both larger or smaller than $b$, and no factor $abc$ or
	$cba$ with $a<b<c$.
	Since we can assume $\widehat\omega\neq b$, it follows that there exists
	$\widehat\omega^{(1)}\in\Ac$ such that
	$\widehat\omega=\xi_b(\widehat\omega^{(1)})$. By \Cref{t:runs},
	$\widehat\omega^{(1)}$ has Parikh vector $\pv^{(1)}$, so that by induction
	it coincides with $\omega^{(1)}$. It follows
	$\widehat\omega=\xi_b(\omega^{(1)})=\omega$.
	\end{proof}
		
	\begin{example}
	Let $\A=\{a<b<c<d\}$. The only cyclic singular word with Parikh vector
	$\pv=(3,3,4,2)$ is $\omega=acbcbcbcadad$. Indeed, as
	$3\geq\delta_b=|4+2-3|=3$,
	the next vector according to \Cref{a:non0} is
	$(3,0,4,2)$, which is then followed by
	\[(3,0,3,2),\,(3,0,2,2),\,(3,0,1,2),\,(0,0,1,2),\,(0,0,1,1),\,
	(0,0,1,0)=\pv^{(7)}\,.\]
	The corresponding cyclic singular words are: $\omega^{(7)}=c$, then
	\[cd,\,cdd,\,acadad,\,accadad,\,acccadad,\,accccadad,\,acbcbcbcadad
	=\omega\,.\]
	However, starting from $\mathbf u=(3,2,4,3)$ instead, we obtain no output.
	Indeed, the next vectors are $(3,2,2,3)$ and $(3,0,2,3)=\mathbf u^{(2)}$,
	but the procedure stops	here since the leftmost component greater or equal
	to the corresponding $\delta$ value is the second one, and
	$\delta_b(\mathbf u^{(2)})=0$. The cyclic words $accbccbdadad$ and
	$accbdaccbdad$, along with their reverses, are all singular with Parikh
	vector $\mathbf u$, but they cannot be obtained by repeated	application of
	$\xi$ operations starting from a constant word.
	\end{example}
	
	\begin{remark}
	Since the reverse of a (cyclic) singular word is still singular,
	\Cref{t:uniq} shows in particular that any cyclic word output by 
    \Cref{a:non0} is symmetric. This can also be proved more directly,
	from properties of $\xi$ maps. An easy consequence is that \Cref{a:non0}
    may only succeed for vectors having at most two odd entries.
	\end{remark}
	
	\begin{corollary}
	If $\C$ is a cyclic abelian class whose vector produces an output under
	\Cref{a:non0}, then $|\Sing(\C)|=1$. In particular, if
	$\A\subseteq \{2,3,4,\ldots,\}$, then $\C$ contains a unique cyclic word $\omega$ with the property that $\Ksc(\omega)=\max \{\Ksc(\nu) : \nu \in \C\}.$
	\end{corollary}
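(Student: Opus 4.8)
The plan is to obtain the corollary as an immediate consequence of \Cref{t:uniq} and \Cref{maxmin}, so almost no new argument is needed. First I would unwind the definitions: by hypothesis the Parikh vector $\pv$ of $\C$ produces an output under \Cref{a:non0}, and \Cref{t:uniq} asserts that this output is the \emph{unique} cyclic singular word with Parikh vector $\pv$. Since $\Sing(\C)=\C\cap\Sing(\A)$ is precisely the set of cyclic singular words whose Parikh vector is that of $\C$, we get $|\Sing(\C)|=1$ at once, which settles the first assertion.

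For the ``in particular'' clause, suppose $\A\subseteq\{2,3,4,\ldots\}$, ordered in the usual way. I would first observe that $\C$ is a finite set, since there are only finitely many cyclic words with a prescribed Parikh vector; hence $\Ksc$ attains a maximum on $\C$. Choosing $\omega\in\C$ with $\Ksc(\omega)=\max\{\Ksc(\nu):\nu\in\C\}$, item~4 of \Cref{maxmin} gives $\omega\in\Sing(\C)$. As $|\Sing(\C)|=1$ by the first part, $\omega$ is the unique element of $\Sing(\C)$, and therefore the unique element of $\C$ at which $\Ksc$ is maximal. To reconcile this with the statement ``unique cyclic word'' (rather than ``unique up to reversal''), one can invoke the remark preceding the corollary: the algorithm's output is symmetric, so $\omega=\omega^*$ and no distinct reversal competes for the maximum.

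I do not expect a genuine obstacle: all the substance lives in \Cref{t:uniq} (singularity of the reconstructed word via the $\xi_b$ maps, and the inductive uniqueness argument resting on Theorem~2 of \cite{Ram}) and in \Cref{maxmin} (which, via \Cref{val}, forces every $\Ksc$-maximizer to be singular). The only points worth stating explicitly are the finiteness of $\C$, needed to guarantee that a maximizer exists before \Cref{maxmin} can be applied, and the elementary identification of $\Sing(\C)$ with the cyclic singular words of the given Parikh vector.
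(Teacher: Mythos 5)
Your proposal is correct and matches the paper's intent exactly: the paper states this corollary without proof, treating it as an immediate consequence of \Cref{t:uniq} (uniqueness of the singular word with the given Parikh vector) and item~4 of \Cref{maxmin} (every $\Ksc$-maximizer is singular), which is precisely your argument. Your additional observations — finiteness of $\C$ guaranteeing a maximizer exists, and the symmetry $\omega=\omega^*$ reconciling genuine uniqueness with uniqueness up to reversal — are correct and only make the deduction more explicit.
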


\end{document}